\newtheorem{theorem}{Theorem}
\newtheorem{lemma}[theorem]{Lemma}
\newtheorem{corollary}[theorem]{Corollary}
\newtheorem{example}[theorem]{Example}
\newtheorem{definition}[theorem]{Definition}
\numberwithin{theorem}{section}
\newtheorem{lem}[theorem]{Lemma}
\newtheorem{remark}[theorem]{Remark}
\newtheorem{defn}[theorem]{Definition}
\numberwithin{equation}{section}
\newcommand{\R}{\mathbb{R}}
\newcommand{\F}{\ensuremath{\mathcal{F}}}
\newcommand{\X}{\ensuremath{\mathbb{X}}}
\DeclareMathOperator*{\esssup}{ess\,sup}
\DeclareMathOperator*{\essinf}{ess\,inf}
\newcommand{\HH}{\mathbb{H}}
\newcommand{\MM}{\mathbb{M}}
\newcommand{\Mm}{\mathcal{M}}
\newcommand{\E}{\mathbb{E}}
\newcommand{\Prob}{\mathbb{P}}
\newcommand{\Q}{\mathbb{Q}}
\newcommand{\ind}{\mathbf{1}}
\newcommand{\Rr}{{\mathbb{R}_0}}
\newcommand{\RR}{{\mathbb{R}}}
\newcommand{\Tt}{[0,T]}
\newcommand{\Ff}{\mathcal{F}}
\newcommand{\Gg}{\mathcal{G}}
\newcommand{\Qq}{\mathcal{Q}}
\newcommand{\FF}{\mathbb{F}}
\newcommand{\GG}{\mathbb{G}}
\newcommand{\Bb}{\mathcal{B}}
\newcommand{\Ht}{\tilde{H}}
\newcommand{\II}{\mathcal{I}}
\newcommand{\LL}{\mathcal{L}}
\newcommand{\ins}{\,}
\DeclareMathOperator{\Hg}{\mathcal{H}^\Gg_2}
\renewcommand{\theenumi}{\roman{enumi})}
\renewcommand{\labelenumi}{\theenumi}
\title{Hedging under worst-case-scenario\\ in a market driven by time-changed L\'evy noises}
\author{Giulia Di Nunno\thanks{CMA and Department of Mathematics,
University of Oslo, P.O. Box 1053 Blindern, N-0316 Oslo, and Department of Business and Management Science, NHH, Helleveien 30, N-5045 Bergen.
Email: giulian@math.uio.no}  \and
Erik Hove Karlsen\thanks{Department of Mathematics, University of Oslo, P.O. Box 1053 Blindern, N-0316 Oslo. Email: erikhk@math.uio.no.}}
\date{Version: April 20, 2015}
\begin{document}
\maketitle

\begin{abstract}
In an incomplete market driven by time-changed L\'evy noises we consider the problem of hedging a financial position coupled with the underlying risk of model uncertainty. Then we study hedging under worst-case-scenario.
The proposed strategies are not necessarily self-financing and include the interplay of a cost process to achieve the perfect hedge at the end of the time horizon.
The hedging problem is tackled in the framework of stochastic differential games and it is treated via backward stochastic differential equations. Two different information flows are considered and the solutions compared.

\vspace{2mm}\noindent
{\it Keywords:}
model uncertainty, hedging, BSDEs, stochastic differential games, time-change, martingale random fields.

\vspace{2mm}\noindent
{\it MSC2010 Classification:} 93E20, 91G80, 60G60, 60G48

\vspace{2mm}\noindent
{\it JEL Classification:} G11, C70

\end{abstract}

\section{Introduction}
In a stylized incomplete market on the time horizon $[0,T]$ $(T>0)$, we consider the problem of hedging a contingent claim coupled with the underlying risk of an uncertain model description. This is referred to as model ambiguity in the literature, see e.g. \cite{ChenEpstein2014}.
We choose to consider a conservative evaluation of such exposure to model ambiguity by measuring the robustness of the strategy in terms of a worst-case-scenario risk measure. 
In fact, on the complete filtered probability space $(\Omega, \Ff, \Prob)$, $\MM = \{ \Mm_t, \, t\in [0,T]\} $, we fix the dynamic coherent risk measure
\begin{equation}
\rho_t (\xi) := \esssup_{\Q \in \Qq_\MM} \: \E_\Q \big[ - \xi \vert \Mm_t \big], \quad t\in [0,T],
\label{risk-measure}
\end{equation}
where $\Qq_\MM$ is the set of all scenarios considered, which are given by the probability measures $\Q$ equivalent to $\Prob$ on the future outcomes $(\Omega, \Mm_T)$.
The risk measure $\rho$ is well-defined and finite for all financial positions $\xi$ integrable with respect to all $\Q\in \Qq_\MM$.
This risk measure is naturally connected to the concept of worst-case-scenario, associating  the risk evaluation with the highest expected value on the downside of the position over all $\Q\in\Qq_\MM$. 
For this reason it is also a conservative evaluation of the risk exposure.

\bigskip
The financial market presents two investment possibilities: a saving account with price dynamics given by
\begin{equation}
dS^{(0)} _t = r_t S^{(0)}_t dt, \quad S^{(0)}_0 = 1,
\label{bond}
\end{equation}
which is used as num\'eraire, and a stock with price dynamics of the type
\begin{equation}
dS^{(1)} _t = \alpha_t S^{(1)}_t dt + \sigma_t S^{(1)}_t dB_t + \int_\Rr \gamma_t(z)S^{(1)}_{t^-} \tilde H(dt,dz) , \quad S^{(1)}_0 > 1.
\label{stock}
\end{equation}
The driving noises $B$ and $\tilde H$ are a doubly stochastic Gaussian and Poisson type measures, respectively related to a time-changed Brownian and Poisson process.
The parameters $r$, $\alpha$, $\sigma$, $\gamma$ are c\`agl\`ad adapted stochastic processes and fields.
To ensure the existence of a square integrable positive solution and to allow further analysis, we assume that
$\vert r_t \vert <C$ (for some $C > 0$) $\Prob \times dt$-a.e., $\gamma_t(z) > -1$ $\Prob\times \Lambda $-a.e., and
$$
\E \Big[ \int_0^T \Big\{   \vert \alpha_t \vert + \sigma _t ^2\lambda^B_t + \int_\Rr \vert \ln (1+\gamma_t(z) ) - \gamma_t(z) \vert \nu(dz) \lambda^H _t \Big\} dt \Big]
< \infty.
$$
The elements $\nu$, $(\lambda^B, \lambda^H)$, and $\Lambda$ are associated to the behaviour of the jumps in these dynamics and the intensity of the (stochastic) time distortion applied, see the next section for details.

\bigskip
Price dynamics of this type include various well-known stochastic volatility price models such as \cite{carr2003}, where time-change of a pure jump L\'evy process is used to take the erratic behaviour of volatility into account by mimicking the transition between a real-time clock to a transactions-time clock. See also e.g. \cite{Barndorff-Nielsen2002}, \cite{heston1993}, \cite{hull1987}, \cite{stein1991} where the stochastic volatility models  lead to dynamics driven by time-changed Brownian motions, which are the doubly stochastic Gaussian noises in this paper.
Also in the credit risk literature we can find examples of price dynamics of the type above. See e.g. \cite{Lando1998}, where doubly stochastic Poisson processes, also called the Cox processes, are largely used in the modelling of prices subject to default risk.

\bigskip
In the market above, we consider a financial claim $F \in L^2(\Omega,\Mm_T,\Prob)$ with payoff at $T>0$ and the $\MM$-predictable hedging strategies represented by the triple $(\pi, V^\pi, C^\pi)$, i.e. $\pi_t$ is the wealth invested in the stock at $t$ representing the market portfolio, $V^\pi_t$ the value of the strategy on the market, and $C^\pi_t$ is the cost process. We denote $V^\pi_0=v>0$. 
We assume that 
\begin{equation}
\label{value}
\begin{split}
d V^\pi_t =  \: & \frac{\pi_t}{S^{(1)}_{t^-}} dS^{(1)}_t + \frac{V^\pi_t - \pi_t }{S^{(0)}_t} dS^{(0)}_t \\ 
= \: & \Big( V^\pi_t r_t + \pi_t \big( \alpha_t - r_t \big) \Big) dt + \pi_t \sigma_t dB_t + \int_\Rr \pi_t \gamma_t(z) \tilde H(dt,dz) \Big) 
\end{split}
\end{equation}
and that $C^\pi_t = \rho_t \big( e^{-\int_t^T r_sds} (V^\pi_T - F) \big).$
We assume that \eqref{value} admits a unique strong solution for any admissible $\pi$ (see e.g. \cite{Jacod1979} for conditions). Moreover, we require that the solution is square integrable, and for this we assume 
\begin{equation}
\label{nec-value}
\begin{split}
\E & \Big[ \int_0^T \Big\{   \vert \alpha_t -r_t \vert \vert \pi_t \vert  + \vert \alpha_t\sigma _t\vert^2 \lambda^B_t + \int_\Rr \vert \pi_t \gamma_t(z) \vert^2 \nu(dz) \lambda^H _t \Big\} dt \Big]
< \infty.
\end{split}
\end{equation}
Moreover, the process $C^\pi$ is also assumed square integrable. 

The process $Y^\pi_t:= V^\pi_t + C^\pi_t$, $t\geq 0$, is called the (total) price of the strategy.
The hedging of $F$ is obtained for a strategy $(\hat\pi, V^{\hat\pi}, C^{\hat\pi})$ that yields $Y^{\hat\pi}_T = F$.
Note that $Y^\pi_0 = v + C^\pi_0$ is $\Mm_0$-measurable and, if the hedging strategy has $C^\pi \equiv 0$, then the market investments are enough to self-finance the hedge. It is only in a complete market that it is possible to hedge all claims with these self-financing strategies.

\bigskip
We observe that for any hedging strategy $(\pi, V^{\pi}, C^{\pi})$ with $Y^\pi_T = F$ we have
\begin{equation*} 
\label{OP0}
\rho_t \big( e^{-\int_t^T r_sds} (Y^{\pi}_T - F) \big) = 0, \quad t\in [0,T].
\end{equation*}
This means that the risk given by the spread between the discounted final strategy price and the actual claim is zero according to the risk measure given, and actually all coherent risk measures.  

\bigskip
Hereafter, we consider the problem of finding a hedging strategy  $(\hat\pi, V^{\hat\pi}, C^{\hat\pi})$ for $F$ such that 
$Y^{\hat\pi}_t = Y_t$, $t \in [0,T]$, where
\begin{align}
\label{Y}
Y_t :=
 &\essinf_{\pi \in \Pi_\MM} \, \rho_t \Big( e^{-\int_t^T r_sds}  (V^{\pi}_T - F) - V^\pi_t \Big), \quad t\in [0,T]. 
\end{align}
Hence the strategy $\hat\pi$ minimizes the risk associated to the total price.
Clearly $Y^{\hat\pi} _T = Y_T = F$.
Here above, the set $\Pi_\MM$ denotes the admissible strategies.

\bigskip
Considering the risk measure \eqref{risk-measure}, we can write the problem \eqref{Y} in the following way:
\begin{align}
\label{Y3}
Y^{\hat\pi}_t =Y_t&=\essinf_{\pi\in\Pi_\MM}\: \esssup_{\Q\in\mathcal{Q}_\MM}{E}_{{\Q}}\big[- \big(e^{-\int_t^Tr_sds}V^{\pi}_T-V^{\pi}_t-e^{-\int_t^Tr_sds}F\big)|\mathcal{M}_t\big] \nonumber \\
&=\essinf_{\pi\in\Pi_\MM}\:  \esssup_{\Q \in\mathcal{Q}_\MM} {E}_{{\Q}}\bigg{[}
e^{-\int_t^Tr_sds}F-\int_t^Te^{-\int_t^sr_udu}\pi_s(\alpha_s-r_s)ds\nonumber\\
&\quad -\int_t^Te^{-\int_t^sr_udu}\pi_s\sigma_sdB_s\nonumber\\
&\quad -\int_t^T\int_{\Rr}e^{-\int_t^sr_udu}\pi_s\gamma_s(z)\tilde{H}(ds,dz)\bigg{|}\mathcal{M}_t\bigg{]} .
\end{align}

A solution to the problem \eqref{Y3} corresponds to finding $(\hat \pi, \hat\Q)\in \Pi_\MM \times \Qq_\MM$ such that
\begin{align}
\label{Ysolution}
Y_t = {E}_{\hat\Q}\bigg{[}
&e^{-\int_t^Tr_sds}F-\int_t^Te^{-\int_t^sr_udu}\hat\pi_s(\alpha_s-r_s)ds\nonumber\\
&-\int_t^Te^{-\int_t^sr_udu}\hat \pi_s\sigma_sdB_s\nonumber\\
&-\int_t^T\int_{\Rr}e^{-\int_t^sr_udu} \hat \pi_s\gamma_s(z)\tilde{H}(ds,dz)\bigg{|}\mathcal{M}_t\bigg{]}, \quad t\in [0,T].
\end{align}
The cost of the hedge is then $C^{\hat\pi} _t = E_{\hat\Q} \big[ e^{-\int_t^T r_sds} (F- V^{\hat\pi }_T) \vert \Mm_t \big]$.
To have a unique description of the optimal strategy, we set $E_{\hat\Q} [C^{\hat\pi} _0] =0 $ and $v= E_{\hat\Q} [Y^{\hat\pi} _0] $.

\bigskip

This kind of approach to hedging is treated in \cite{Delong2012} (see also \cite{Delong2013}) in the context of a financial market driven by a Brownian motion and an insurance payment process driven by a Poisson process. 
See also \cite{Balter2015} for a study of a similar problem within an insurance perspective, but Brownian driven dynamics.
We also refer to \cite{Karlsen2014}, where a first study of this problem is given in the context of a market driven by a Brownian motion and a doubly stochastic Poisson noise. 
Comparatively, in the present paper we consider a more general market model and a substantially different structure of admissible strategies. 

The admissible scenarios are described by a measure change via shift transformation. With respect to this, we suggest a version of the Girsanov theorem that explicitely deals with time-change. We note that, in this context, the measure change is not structure preserving in general.

The hedging problem \eqref{Y3}-\eqref{Ysolution} is tackled using backward stochastic differential equations (BSDEs) and stochastic differential games. 
Our study is carried through in the context of two different filtrations: $\MM=\FF$, which is substantially the information flow generated by the noises, and $\MM=\GG$, which is the filtration that, additionally to $\FF$, includes initial knowledge of the time-change process. These two settings lead to different BSDEs depending on their measurability properties.
We treat the solutions exploiting the martingale random field properties of the driving noises and, in the case of $\GG$, we also rely on the better explicit structure of the noise (which allows for a more explicit stochastic representation theorem).

In the case of information flow $\GG$, BSDEs driven by doubly stochastic L\'evy noises are treated in \cite{DS2014}. 
We also mention that these integral representation theorems are studied in \cite{Sjursen2011} with different approaches: via chaos expansions and via the non-anticipating derivative (see also \cite{DiNunno2007b} for a review on stochastic derivation). 
As for filtration $\FF$, we rely on the general results of \cite{Carbone2008}, which we adapt to the random field set-up.

Even though we can regard the information flow $\FF$ as partial with respect to $\GG$, the problems presented here are not the same as in the study on BSDEs with partial information, see e.g. \cite{Ceci} in the case of mean-variance hedging.

Finally, we remark that the hedging criteria we consider differs from mean-variance hedging in the objective function to minimize: mean-variance hedging identifies the strategy by minimizing the quadratic cost, see e.g. \cite{jeanblanc2011} in the context of prices modeled by general semimartingales, and \cite{lim2005} for the case of dynamics driven by a Brownian motion and doubly stochastic Poisson noises.

\bigskip
The paper is structured as follows:
the next section provides details about the framework and the BSDEs considered. In Section 3, we study shift transformations, while Section 4 is dedicated to the actual solution of the hedging problem in the two information flows considered.
Section 5 concludes with comments on the results obtained.

\section{The framework and preliminary results}

Hereafter, we give full detail of the noises considered in \eqref{stock} and the stochastic structures used. For this we refer to \cite{DS2014} and \cite{Sjursen2011}. In particular, we apply stochastic integration with respect to martingale random fields, see \cite{DiNunno2010} and \cite{Cairoli1975}.

\subsection{The random measures and their properties}

Let $(\Omega, \Ff, \Prob)$ be a complete probability space and $\X := [0,T] \times \RR$. We will consider $\X = \big ([0,T] \cup \{0\} \big) \cup \big( [0,T]\times \Rr\big)$, where $\Rr := \mathbb{R}\setminus \{0\}$ and $T>0$. 
Clearly, $[0,T] \cup \{0\} \simeq [0,T] $.
Denote $\Bb_\X$ the Borel $\sigma$-algebra on $\X$. Whenever we write $\Delta \subset \X$,  we intend a set $\Delta$ in $\Bb_\X$. 

The two dimensional stochastic process $\lambda:= (\lambda^B, \lambda^H)$ represents the intensity of the stochastic time distortion applied in the noise. Each component $\lambda^l$ for $l=B,H$, satisfies
\begin{enumerate}
\item $\lambda^l_t \geq 0$ $\Prob$-a.s. for all $t\in [0,T]$,
\label{item:lambda_1}
\item $\lim_{h\to 0} \mathbb{P} \big( \big| \lambda^l _{t+h} -\lambda^l_t \big| \geq \epsilon \big) = 0$ 
for all $\epsilon > 0$ and almost all $t \in \Tt$,
\label{item:lambda_2}
\item $\E \big[ \int_0^T  \lambda^l _t  \ins dt \big] < \infty. $ 
\label{item:lambda_3}
\end{enumerate}
Denote the space of all processes $\lambda := (\lambda^{B}, \lambda^{H})$ satisfying i), ii) and iii) by $\LL$. 


Correspondingly, we define the random measure $\Lambda$ on $\X$ by 
\begin{equation}
\Lambda(\Delta) := \int\limits_0^T \ind_\Delta (t, 0) \ins \lambda^B _tdt + \int\limits_0^T \int\limits_{\Rr} \ind_\Delta (t,z) \ins \nu(dz) \lambda^H_t dt, \quad \Delta \in \X.
\label{eq:measure_Lambda}
\end{equation}
Here $\nu$ is a (deterministic) $\sigma$-finite measure on the Borel sets of $\Rr$ satisfying 
\begin{equation*}
\int_\Rr z^2 \ins \nu(dz)<\infty. 
\end{equation*}
We denote the $\sigma$-algebra generated by the values of $\Lambda$  by $\Ff^\Lambda$.
Furthermore, $\Lambda^H$ denotes the restriction of $\Lambda$ to $[0,T]\times \Rr$ and $\Lambda^B$ the restriction of $\Lambda$ to $[0,T]\times \{0\}$. Hence $\Lambda(\Delta)=\Lambda^B(\Delta\cap[0,T]\times\{0\})+\Lambda^H(\Delta\cap[0,T]\times\Rr)$, $\Delta\subseteq \X$. 
Here below we introduce the noises driving the stochastic dynamics.

\begin{definition}
\label{definition:listA}
$B$ is a signed random measure on the Borel sets of $\Tt \times \{0\}$, satisfying
\begin{enumerate}
\renewcommand{\theenumi}{A\arabic{enumi})}
\item $\Prob\Big( B(\Delta) \leq x \,\Big| \Ff^\Lambda \Big) = \Prob\Big( B(\Delta) \leq x  \,\Big| \Lambda^B(\Delta) \Big) =  \Phi \big(\frac{x}{\sqrt{ \Lambda^B(\Delta)}}\big)$, $x\in\mathbb{R}$, $\Delta \subseteq [0,T]\times\{0\}$,
\label{list:A1}
\item $B(\Delta_1)$ and $B(\Delta_2)$ are conditionally independent given $\Ff^{\Lambda}$ whenever $\Delta_1$ and $\Delta_2$ are disjoint sets. 
\label{list:A2}
\end{enumerate}
Here $\Phi$ stands for the cumulative probability distribution function of a standard normal random variable.

$H$ is a random measure on the Borel sets of $\Tt\times \Rr$, satisfying
\begin{enumerate}
\renewcommand{\theenumi}{A\arabic{enumi})}
\setcounter{enumi}{2}
\item $\Prob\Big( H(\Delta) = k \,\Big|\Ff^\Lambda \Big) = \Prob\Big( H(\Delta) = k \,\Big| \Lambda^H(\Delta) \Big)  =  \frac{\Lambda^H(\Delta)^k}{k!} e^{-\Lambda^H(\Delta)}$, $k\in \mathbb{N}$, $\Delta \subseteq \Tt\times \Rr$,
\label{list:A3}
\item $H(\Delta_1)$ and $H(\Delta_2)$ are conditionally independent given $\Ff^{\Lambda}$ whenever $\Delta_1$ and $\Delta_2$ are disjoint sets. 
\label{list:A4}
\end{enumerate}

Furthermore, we assume that 
\begin{enumerate}
\renewcommand{\theenumi}{A\arabic{enumi})}
\setcounter{enumi}{4}
\item $B$ and $H$ are conditionally independent given $\Ff^\Lambda$.
\label{list:A5}
\end{enumerate}
\end{definition}

Substantially, conditional on  $\Lambda$, we have that $B$ is a Gaussian random measure and  $H$ is a Poisson random measure. 
We refer to \cite{Grigelions1975} or \cite{Kallenberg1997} for the existence of the above conditional distributions.

Let $\Ht:= H-\Lambda^H$ be the signed random measure given by
\begin{equation*}
\Ht(\Delta) = H(\Delta) - \Lambda^H(\Delta), \quad \Delta \subseteq \Tt\times \Rr.
\end{equation*}

\begin{definition}
We define the signed random measure $\mu$ on the Borel subsets of $\X$ by
\begin{equation}
\mu(\Delta) := B\Big(\Delta \cap[0,T]\times \{0\} \Big) + \Ht\Big(\Delta \cap [0,T]\times \Rr\Big), \quad \Delta \subseteq \X.
\label{eq:mu_definition}
\end{equation}
\end{definition}

The random measures $B$ and $H$ are related to a specific form of time-change for Brownian motion and pure jump L{\'e}vy process. More specifically define $B_t := B( [0,t]\times \{ 0\})$, $\Lambda^B_t := \int_0^t \lambda^B _s \ins ds$, $\eta_t := \int_0^t\int_{\Rr} z \ins \Ht(ds,dz)$ and $\hat{\Lambda}^H_t := \int_0^t \lambda^H _s \ins ds$, for $t \in [0,T]$.

We can immediately see the role that the time-change processes $\Lambda^B$ and $\hat{\Lambda}^H$ play by studying the characteristic function of $B$ and $\eta$. In fact, from \ref{list:A1} and \ref{list:A3} we see that the conditional characteristic functions of $B_t$ and $\eta_t$ are given by
\begin{align}
\E \big[ e^{ic B_t} \big| \Ff^\Lambda \big] &= \exp\bigg\{ \int\limits_0^t \frac{1}{2}c^2  \ins \lambda^B _s ds \bigg\} = \exp\bigg\{ \frac{1}{2}c^2 \Lambda_t^B \bigg\}, \quad c\in \RR,
\label{eq:characteristic_B} \\
\E \big[ e^{ic \eta_t} \big| \Ff^\Lambda \big] &= \exp\bigg\{\int\limits_0^t \int\limits_\Rr \big[e^{icz} -1-icz \big]\ins \nu(dz) \lambda^H_s ds \bigg\} \nonumber \\
&= \exp\bigg\{ \bigg( \int\limits_\Rr \big[e^{icz} -1-icz \big]\ins \nu(dz) \bigg) \, \hat{\Lambda}_t^H \bigg\}, \quad c \in \RR.
\label{eq:characteristic_H} 
\end{align}

Indeed, there is a strong connection between the distributions of $B$ and the Brownian motion, and between $\eta$ and a centered pure jump L{\'e}vy process with the same jump behavior. The relationship is based on a random distortion of the time scale. The following characterization is due to \cite[Theorem 3.1]{Serfozo1972} (see also \cite{Grigelions1975}). 
\begin{theorem}
Let $W_t$, $t\in \Tt$, be a Brownian motion and $N_t$, $t\in\Tt$, be a centered pure jump L\'evy process with Levy measure $\nu$. Assume that both $W$ and $N$ are independent of $\Lambda$. 
Then $B$ satisfies \ref{list:A1}-\eqref{eq:characteristic_B} and \ref{list:A2} if and only if, for any $t\geq 0$,
\begin{equation*}
B_t  \stackrel{d}{=} W_{\Lambda_t^B},
\end{equation*}
and $\eta$ satisfies \ref{list:A3}-\eqref{eq:characteristic_H} and \ref{list:A4} if and only if, for any $t\geq 0$,
\begin{equation*}
\eta_t \stackrel{d}{=} N_{\hat{\Lambda}^H_t}. 
\end{equation*}
\end{theorem}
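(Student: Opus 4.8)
The plan is to prove both equivalences by conditioning on the $\sigma$-algebra $\Ff^\Lambda$ (equivalently on $\sigma(\Lambda)$). Under this conditioning the functions $t\mapsto\Lambda^B_t=\int_0^t\lambda^B_s\,ds$ and $t\mapsto\hat\Lambda^H_t=\int_0^t\lambda^H_s\,ds$ become deterministic, finite and continuous (finiteness and continuity come from the integrability condition iii) on $\lambda$), so they are bona fide time changes. The identities $B_t\stackrel{d}{=}W_{\Lambda^B_t}$ and $\eta_t\stackrel{d}{=}N_{\hat\Lambda^H_t}$ should be read at the level of finite-dimensional distributions of the processes; since $W$ and $N$ are independent of $\Lambda$, it suffices to show that the \emph{conditional} finite-dimensional laws given $\Ff^\Lambda$ of $(B_{t_1},\dots,B_{t_n})$ and $(\eta_{t_1},\dots,\eta_{t_n})$ coincide with those of $(W_{\Lambda^B_{t_1}},\dots,W_{\Lambda^B_{t_n}})$ and $(N_{\hat\Lambda^H_{t_1}},\dots,N_{\hat\Lambda^H_{t_n}})$ given $\sigma(\Lambda)$, and then to integrate out $\Lambda$.

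For the direction ``axioms $\Rightarrow$ representation'': fix $0\le t_1<\dots<t_n\le T$. Decomposing $B_{t_j}=\sum_{k\le j}B\big((t_{k-1},t_k]\times\{0\}\big)$, property \ref{list:A1} gives that each increment is, conditionally on $\Ff^\Lambda$, centred Gaussian with variance $\Lambda^B_{t_k}-\Lambda^B_{t_{k-1}}$, and property \ref{list:A2} makes these increments conditionally independent; hence $(B_{t_1},\dots,B_{t_n})\mid\Ff^\Lambda$ is centred Gaussian with covariance matrix $(\Lambda^B_{t_j\wedge t_k})_{j,k}$, which is exactly the conditional law of $(W_{\Lambda^B_{t_1}},\dots,W_{\Lambda^B_{t_n}})\mid\sigma(\Lambda)$. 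Averaging over $\Lambda$ (whose law is the same in both settings, $W$ being a freshly adjoined Brownian motion independent of $\Lambda$) yields the claimed equality in law. The same scheme applies to $\eta$: by \ref{list:A3}--\ref{list:A4} the restriction of $H$ to $(s,t]\times\Rr$ is, conditionally on $\Ff^\Lambda$, a Poisson random measure with intensity $\nu(dz)\lambda^H_u\,du$, independent over disjoint time intervals; Campbell's (exponential) formula then gives that $\eta_t-\eta_s$ has conditional characteristic function $\exp\{(\hat\Lambda^H_t-\hat\Lambda^H_s)\int_\Rr(e^{icz}-1-icz)\,\nu(dz)\}$ --- i.e.\ the increment version of \eqref{eq:characteristic_H} --- with conditionally independent increments, and this matches the conditional finite-dimensional distributions of $N\circ\hat\Lambda^H$ for $N$ the centred pure-jump L\'evy process with L\'evy measure $\nu$; integrating out $\Lambda$ concludes.

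For the converse ``representation $\Rightarrow$ axioms'': given a Brownian motion $W\perp\Lambda$, define $B(\Delta)$ for $\Delta\subseteq[0,T]\times\{0\}$ as the Gaussian white-noise increment of $W$ over the time-changed set; conditionally on $\Ff^\Lambda$, $W\circ\Lambda^B$ has independent increments with $\var(W_{\Lambda^B_t}-W_{\Lambda^B_s}\mid\Ff^\Lambda)=\Lambda^B_t-\Lambda^B_s$, so $B(\Delta)\mid\Ff^\Lambda\sim\N(0,\Lambda^B(\Delta))$, which is \ref{list:A1} together with \eqref{eq:characteristic_B}, and conditional independence of $W$ over disjoint time-changed sets gives \ref{list:A2}. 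Symmetrically, given a centred pure-jump L\'evy process $N\perp\Lambda$ with L\'evy measure $\nu$, let $H(\Delta)$ count the jumps of $N\circ\hat\Lambda^H$ falling in $\Delta\subseteq[0,T]\times\Rr$; conditionally on $\Ff^\Lambda$ this is a Poisson random measure with intensity $\nu(dz)\lambda^H_u\,du$, which is \ref{list:A3}--\ref{list:A4}, its compensated first moment reproduces $\eta_t=\int_0^t\int_\Rr z\,\tilde H(ds,dz)=N_{\hat\Lambda^H_t}$ (well defined in $L^2$ since $\int_\Rr z^2\,\nu(dz)<\infty$), and Campbell's formula gives \eqref{eq:characteristic_H}.

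The principal difficulty is not any single computation but the careful handling of conditional laws: one has to (i) legitimize conditioning on $\Ff^\Lambda$ so that $\Lambda^B,\hat\Lambda^H$ act as deterministic time changes --- this is where conditions i)--iii) on $\lambda$ enter --- and (ii) upgrade ``equality of conditional finite-dimensional distributions given $\Ff^\Lambda$'' to ``equality in law of the processes'', which forces one to realize $W$ (resp.\ $N$) on a possibly enlarged space, independent of $\Lambda$, so that the law of $\Lambda$ agrees on both sides before averaging. A secondary technical point is the $L^2$ construction of $\eta_t=\int_0^t\int_\Rr z\,\tilde H(ds,dz)$ as a limit of compensated integrals over $\{|z|>\varepsilon\}$ under the assumption $\int_\Rr z^2\,\nu(dz)<\infty$, together with checking that the conditional characteristic function \eqref{eq:characteristic_H} passes to the limit; this is exactly what identifies the law of $\eta$ and what \cite{Serfozo1972} makes rigorous.
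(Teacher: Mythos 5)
The paper does not actually prove this statement: it is imported verbatim from Serfozo (Theorem 3.1), so your argument is necessarily a different route, namely a direct verification by conditioning on $\Ff^\Lambda$. Your first direction (axioms $\Rightarrow$ representation) is sound: \ref{list:A1} resp.\ \ref{list:A3} identify the conditional law of the increments given $\Ff^\Lambda$, \ref{list:A2} resp.\ \ref{list:A4} give conditional independence over disjoint sets, so the conditional finite-dimensional laws of $B$ (resp.\ $\eta$) given $\Ff^\Lambda$ coincide with those of $W_{\Lambda^B_\cdot}$ (resp.\ $N_{\hat\Lambda^H_\cdot}$) given $\sigma(\Lambda)$, and since $W,N$ are independent of $\Lambda$ the law of $\Lambda$ is common to both sides, so integrating it out yields the stated identities (indeed equality of all finite-dimensional distributions, which is more than the one-dimensional claim).

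The gap is in your converse (representation $\Rightarrow$ axioms). What you prove there is that the object \emph{constructed} from $W$ (resp.\ from the jump measure of $N\circ\hat\Lambda^H$) on the same space as $\Lambda$, with $W\perp\Lambda$, satisfies \ref{list:A1}--\ref{list:A2} (resp.\ \ref{list:A3}--\ref{list:A4}). That is not the stated converse, which takes the \emph{given} random measure $B$, assumes only $B_t\stackrel{d}{=}W_{\Lambda^B_t}$, and asks for the axioms for that $B$. Since \ref{list:A1}--\ref{list:A2} are statements about the conditional law of $B$ given $\Ff^\Lambda$, i.e.\ about the joint law of $(B,\Lambda)$, they cannot follow from a distributional identity for $B$ alone, not even at the level of finite-dimensional distributions: take $\lambda^B$ a random constant with values in $\{1,2\}$, let $(W',\lambda')$ be an independent copy of $(W,\lambda^B)$, independent of $\lambda^B$, and set $B_t:=W'_{\lambda' t}$; then $B$ has exactly the finite-dimensional laws of $W_{\Lambda^B_\cdot}$, yet $B$ is independent of $\Ff^\Lambda$, so $\Prob(B(\Delta)\le x\,\vert\,\Ff^\Lambda)$ is a mixture law and \ref{list:A1} fails. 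The converse is correct, and your construction argument does prove it, only when the equality in distribution is read \emph{jointly} with the time change, $(B,\Lambda)\stackrel{d}{=}(W_{\Lambda^B_\cdot},\Lambda)$ with $W\perp\Lambda$ (this is how the characterization in Serfozo is formulated); then the axioms, being properties of the joint law, transfer from the constructed pair to the given one. You should state this joint reading explicitly and add that transfer step; as written, the second half does not establish the claimed implication.
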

In addition, $B$ is infinitely divisible if $\Lambda^B$ is infinitely divisible, and $\eta$ is infinitely divisible if $\hat{\Lambda}^H$ is infinitely divisible, see \cite[Theorem 7.1]{Barndorff-Nielsen2006}.

\subsection{Stochastic non-anticipating integration and representation theorems}
\label{section:integration}

Let us define $\FF^{\mu}= \{\Ff_t^{\mu}$,\; $t\in [0,T]\}$ as the filtration generated by $\mu(\Delta)$, $\Delta\subseteq [0,t]\times \RR$, $t \in [0,T]$. In view of \eqref{eq:mu_definition}, \ref{list:A1} and \ref{list:A3}, we can see that, for any $t\in [0,T]$, 
\begin{equation*}
\Ff_t^\mu = \Ff_t^B \vee \Ff_t^H \vee \Ff_t^\Lambda, 
\end{equation*}
where $\Ff_t^B$ is generated by $B(\Delta\cap [0,T]\times \{0\})$, $\Ff_t^H$ by $H(\Delta\cap [0,T]\times \Rr)$, and $\Ff^\Lambda_t$ by $\Lambda(\Delta)$, $\Delta \in [0,t]\times \RR$. This is an application of \cite[Theorem 1]{Winkel2001} and \cite[Theorem 2.8]{Sjursen2011}. Set $\FF= \{\Ff_t$,\; $t\in [0,T]\}$, where
\begin{equation*}
\Ff_t := \bigcap_{r>t} \Ff^{\mu}_r .
\end{equation*}
Furthermore, we set $\GG=\{\Gg_t,\; t\in [0,T]\}$ where $\Gg_t:=\Ff_t \vee \Ff^{\Lambda}$. 
Remark that $\Gg_T = \Ff_T$, $\Gg_0=\Ff^\Lambda$, while $\Ff_0$ is trivial. From now on we set $\Ff=\Ff_T$.

\begin{lemma}
The filtration $\GG$ is right-continuous. 
\end{lemma}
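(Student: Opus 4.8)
The plan is to verify $\bigcap_{s>t}\Gg_s=\Gg_t$ for every $t\in[0,T)$. The inclusion $\Gg_t\subseteq\bigcap_{s>t}\Gg_s$ is immediate from the monotonicity of $\GG$, so the whole point is the reverse inclusion. Note first that $\FF$ is right-continuous by construction, since $\bigcap_{r>t}\Ff_r=\bigcap_{r>t}\bigcap_{u>r}\Ff^\mu_u=\bigcap_{u>t}\Ff^\mu_u=\Ff_t$, and that, using $\Ff^\mu_t=\Ff^B_t\vee\Ff^H_t\vee\Ff^\Lambda_t$, the filtration $\GG$ is precisely the initial enlargement of $\FF$ by the $\sigma$-algebra $\Ff^\Lambda$ carrying the whole time change. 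Hence what must be shown is that this enlargement does not open a gap at $t+$, i.e. that the intersection over $s>t$ commutes with the join $\,\cdot\,\vee\Ff^\Lambda$ — something false for arbitrary $\sigma$-algebras, which has to be extracted from the conditional structure of the noise.

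The key structural input is that, conditionally on $\Ff^\Lambda$, the random measure $\mu$ is a process with independent increments. Indeed, by \ref{list:A1}--\ref{list:A5}, given $\Ff^\Lambda$ the measures $B$ and $H$ are a Gaussian and a Poisson random measure respectively, they are conditionally independent, and their values on disjoint sets are conditionally independent; subtracting from $H$ the (now $\Ff^\Lambda$-measurable, hence ``deterministic'') compensator $\Lambda^H$ preserves these properties. So, conditionally on $\Ff^\Lambda$, the measure-indexed process generating $\FF^\mu$ has independent increments and its conditional characteristic functionals are the deterministic and $t$-continuous expressions in \eqref{eq:characteristic_B}--\eqref{eq:characteristic_H}, continuity in $t$ coming from the absolute continuity of the clocks $\Lambda^B$ and $\hat{\Lambda}^H$, so that there are no fixed times of discontinuity. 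By the Blumenthal-type $0$--$1$ law for processes with independent increments without fixed discontinuities, the natural filtration of $\mu$, augmented with the $\Prob(\cdot\mid\Ff^\Lambda)$-null sets, is right-continuous.

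To pass from this conditional statement to the claim I would disintegrate along $\Ff^\Lambda$. Let $\Prob^\omega:=\Prob(\cdot\mid\Ff^\Lambda)(\omega)$ be a regular conditional probability (available on the canonical path space of the noise). Fix $t$ and take $A\in\bigcap_{s>t}\Gg_s$. Since $\Ff^\Lambda\subseteq\Gg_t$, one has $\E[\ind_A\mid\Gg_t]=\E^{\Prob^\omega}[\ind_A\mid\Gg_t]$ for $\Prob$-a.e.\ $\omega$. Under $\Prob^\omega$ the measure $\Lambda$ is fixed, so $\Ff^\Lambda$ and each $\Ff^\Lambda_r$ are $\Prob^\omega$-trivial, and therefore, up to $\Prob^\omega$-null sets, $\Gg_s=\Ff_s\vee\Ff^\Lambda$ coincides with the $\Prob^\omega$-augmented natural filtration of $\mu$ at time $s$ (using $\Ff_s=\bigcap_{r>s}\Ff^\mu_r$ and $\Ff^\mu_r=\Ff^B_r\vee\Ff^H_r\vee\Ff^\Lambda_r$). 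The preceding paragraph then yields that $\bigcap_{s>t}\Gg_s$ and $\Gg_t$ agree up to $\Prob^\omega$-null sets, so $\E^{\Prob^\omega}[\ind_A\mid\Gg_t]=\ind_A$ $\Prob^\omega$-a.s., hence $\E[\ind_A\mid\Gg_t]=\ind_A$ $\Prob$-a.s. Since (under the usual augmentation by $\Prob$-null sets) a set whose indicator is $\Prob$-a.s.\ equal to a $\Gg_t$-measurable function belongs to $\Gg_t$, we get $A\in\Gg_t$, which is the desired inclusion.

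The main obstacle is the middle step: making rigorous that $\mu$ is, conditionally on $\Ff^\Lambda$, a sufficiently regular process with independent increments, and invoking the right-continuity of its augmented natural filtration (the Blumenthal zero-one law in this conditional, time-inhomogeneous form), together with the measure-theoretic bookkeeping of the disintegration — existence of $\Prob^\omega$, the identity $\E[\,\cdot\mid\Gg_t]=\E^{\Prob^\omega}[\,\cdot\mid\Gg_t]$ (valid because $\Ff^\Lambda\subseteq\Gg_t$), and the identification of $\Gg_s$ with the conditional natural filtration modulo $\Prob^\omega$-null sets. By contrast, a frontal attack manipulating the $\sigma$-algebras $\Ff_s$ and $\Ff^\Lambda$ directly fails, precisely because on any interval $(t,s]$ these two share non-trivial information about $\Lambda$; conditioning on all of $\Ff^\Lambda$ is exactly what removes this overlap at once.
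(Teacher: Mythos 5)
Your argument is correct and is essentially the paper's: the paper's proof consists only of the remark that one adapts the classical argument for L\'evy filtrations (Applebaum, Thm.~2.1.9), and your conditioning on $\Ff^\Lambda$ --- under which $\mu$ becomes an additive, stochastically continuous noise with no fixed discontinuities and $\Gg_0$ trivial --- followed by the disintegration transfer is precisely that adaptation made explicit. The only caveat, as in the classical case, is that right-continuity holds for the $\Prob$-null-augmented filtration, a point you correctly flag and the paper leaves implicit.
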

\begin{proof}
This can be shown adapting classical arguments for the L\'evy case as in e.g. \cite[Theorem 2.1.9]{Applebaum2004}.
\end{proof}

For $\Delta \subset (t,T]\times \RR$, the conditional independence \ref{list:A2} and \ref{list:A4} means that
\begin{equation}
\E\big[ \mu(\Delta) \ins \big| \Gg_t \big] = \E\big[ \mu(\Delta) \ins \big| \Ff_t \vee \Ff^\Lambda \big] = \E\big[ \mu(\Delta) \ins \big| \Ff^\Lambda \big] =0.
\label{eq:conditional_independence_explained}
\end{equation}

Hence, $\mu$ is a {\it martingale random field (with conditional orthogonal values in $L^2(\Omega,\Ff, \Prob)$} with respect to $\GG$ in the sense of \cite{DiNunno2010} (see Definition 2.1), since 
\begin{itemize}
\item $\mu$ has a $\sigma$-finite variance measure 
\begin{equation*}
m(\Delta) := \E \big[ \mu(\Delta)^2] =  \E \big[ \Lambda(\Delta)], \quad \Delta\subseteq \X,
\end{equation*}
with $m(\{0\}\times \R) = 0$,
\item
it is additive on pairwise disjoint sets in $\Bb_\X$ and $\sigma$-additive with convergence in $L^2$,
\item $\mu$ is $\GG$-adapted, 
\item
it has the martingale property \eqref{eq:conditional_independence_explained},
\item $\mu$ has conditionally orthogonal values, if $\Delta_1, \Delta_2 \subset (t,T]\times \RR$ such that $\Delta_1 \cap \Delta_2 = \emptyset$ then, combining \ref{list:A2}, \ref{list:A4}, \ref{list:A5} and \eqref{eq:conditional_independence_explained},
\begin{equation*}
\E \Big[ \mu(\Delta_1) \mu(\Delta_2) \ins \Big| \Gg_t \Big] = \E \Big[ \mu(\Delta_1) \ins\Big| \Ff^\Lambda \Big] \E \Big[ \mu(\Delta_2)\ins \Big| \Ff^\Lambda \Big] =0.
\label{eq_mu:conditionally_orthogonal_values} 
\end{equation*}
\end{itemize}
In \cite{Cairoli1975} there is a discussion about martingale (difference) random fields and the role of ordering associated with the information flow. In their terminology the martingale random fields here treated is both a ``strong" and a `weak" martingale.

\bigskip
Denote $\II_\GG$ as the subspace of $L^2(\Tt\times \RR \times \Omega, \Bb_{\X} \times \Ff, \Lambda\times \Prob)$ of the random fields admitting a $\GG$-predictable modification, in particular 
\begin{equation}
\| \phi \|_{\II_\GG} :=  \Bigg( \E \Bigg[ \int\limits_0^T \phi _s(0)^2 \ins \lambda^B_s ds + \int\limits_0^T\int\limits_\Rr \phi _s(z)^2 \ins \nu(dz) \lambda^H _s ds \Bigg] \Bigg) ^{\frac{1}{2}} < \infty.
\label{eq:II_defined}
\end{equation}
For any $\phi \in \II_\GG$, we define the (It\^o type) non-anticipative stochastic integral $I: \II_\GG \Rightarrow L^2(\Omega, \Ff, \Prob)$ by
\begin{equation*}
I(\phi ) := \int\limits_0^T \phi _s(0) \ins dB_s + \int\limits_0^T\int\limits_\Rr \phi_s(z) \ins \Ht(ds,dz).
\end{equation*}
We refer to \cite{DiNunno2010} for the details on the integration with respect to martingale random fields of the type discussed here. Recall that $I$ is a linear isometric operator:
\begin{equation*}
\sqrt{ E\big[ I(\phi)^2 \big] } = \|I(\phi) \|_{L^2(\Omega, \Ff, \Prob)} = \| \phi \|_{\II_\GG}.
\label{eq:isometry_general}
\end{equation*}

Because of the structure of the filtration considered, we have the following result (see \cite{DS2014}):
\begin{lemma}
Consider $\xi \in L^2\big(\Omega,\mathcal{F}^\Lambda,\mathbb{P}\big)$ and $\phi \in \mathcal{I}_\GG$. Then
\begin{equation*}
\xi I(\phi) = I( \xi\phi),
\end{equation*}
whenever either side of the equality exists as an element in $L^2\big(\Omega,\mathcal{F},\mathbb{P}\big)$.
\label{lemma:f_alpha_and integration}
\end{lemma}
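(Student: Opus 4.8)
The plan is to verify the identity first for elementary integrands, where it is a one-line computation, then to pass to the limit in $\phi$ with $\xi$ bounded, and finally to remove the boundedness of $\xi$ by truncation. The structural fact that drives everything is that $\Ff^\Lambda=\Gg_0$: any $\xi\in L^2(\Omega,\Ff^\Lambda,\Prob)$ is $\Gg_0$-measurable, hence $\Gg_s$-measurable for every $s\in[0,T]$, so $\xi\phi$ inherits the $\GG$-predictability of $\phi$ and $\xi$ behaves like a deterministic constant with respect to the integral operator $I$.

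\emph{Steps 1--2 (bounded $\xi$).} Suppose $\xi$ is bounded. For an elementary $\GG$-predictable field $\phi_s(z)=\sum_j a_j\,\ind_{(t_j,t_{j+1}]}(s)\,\ind_{\Delta_j}(z)$, with $a_j$ bounded and $\Gg_{t_j}$-measurable and $\Delta_j$ Borel, we have $I(\phi)=\sum_j a_j\,\mu\big((t_j,t_{j+1}]\times\Delta_j\big)$ by definition of $I$; since $\xi a_j$ is again bounded and $\Gg_{t_j}$-measurable ($\xi$ being $\Gg_0\subseteq\Gg_{t_j}$-measurable), $\xi\phi$ is elementary, lies in $\II_\GG$, and $I(\xi\phi)=\sum_j\xi a_j\,\mu\big((t_j,t_{j+1}]\times\Delta_j\big)=\xi I(\phi)$ $\Prob$-a.s. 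For a general $\phi\in\II_\GG$ I would pick elementary $\phi^{(n)}\to\phi$ in $\II_\GG$; then $\|\xi\phi^{(n)}-\xi\phi\|_{\II_\GG}\le\|\xi\|_\infty\|\phi^{(n)}-\phi\|_{\II_\GG}\to0$, so by the isometry of $I$ both $I(\phi^{(n)})\to I(\phi)$ and $I(\xi\phi^{(n)})\to I(\xi\phi)$ in $L^2(\Omega,\Ff,\Prob)$, whence $\xi I(\phi^{(n)})\to\xi I(\phi)$ in $L^2$ as well, and passing to the limit gives $I(\xi\phi)=\xi I(\phi)$.

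\emph{Step 3 (general $\xi$).} Let $\xi\in L^2(\Omega,\Ff^\Lambda,\Prob)$ and set $\xi_n:=\xi\,\ind_{\{|\xi|\le n\}}$, which is bounded and $\Ff^\Lambda$-measurable, so $I(\xi_n\phi)=\xi_n I(\phi)$ by the previous step. Conditioning on $\Ff^\Lambda$, using the $\Ff^\Lambda$-measurability of $\xi$ together with the conditional isometry of $I$ given $\Ff^\Lambda$ (valid because, conditionally on $\Ff^\Lambda$, $\mu$ is a centred martingale random measure with variance measure $\Lambda$; alternatively it follows from the $\GG$-martingale property of $\mu$ and $\Ff^\Lambda=\Gg_0$), one obtains
\begin{align*}
\E\big[(\xi I(\phi))^2\big]
&= \E\big[\,\xi^2\,\E[\,I(\phi)^2\mid\Ff^\Lambda\,]\,\big] \\
&= \E\Big[\,\xi^2\,\E\Big[\int_0^T\phi_s(0)^2\lambda^B_s\,ds+\int_0^T\!\!\int_\Rr\phi_s(z)^2\,\nu(dz)\,\lambda^H_s\,ds\;\Big|\;\Ff^\Lambda\Big]\Big] \\
&= \|\xi\phi\|_{\II_\GG}^2 .
\end{align*}
Hence $\xi I(\phi)\in L^2$ if and only if $\xi\phi\in\II_\GG$, i.e. the two sides of the asserted identity are in $L^2$ simultaneously; assuming they are, dominated convergence applied to the defining norm \eqref{eq:II_defined} (with dominating function $|\xi\phi|$) gives $\xi_n\phi\to\xi\phi$ in $\II_\GG$, so $I(\xi_n\phi)\to I(\xi\phi)$ in $L^2$, while $\xi_n I(\phi)\to\xi I(\phi)$ in $L^2$ by dominated convergence with dominating function $|\xi I(\phi)|$. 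Letting $n\to\infty$ in $I(\xi_n\phi)=\xi_n I(\phi)$ proves the lemma.

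Steps 1 and 2 are the classical ``constants factor out of stochastic integrals'' argument, the only point to stress being the identification $\Ff^\Lambda=\Gg_0$ that makes $\xi$ play the role of a constant relative to $\GG$. The main obstacle is the bookkeeping behind the phrase ``whenever either side exists'' in Step 3: one must show that $\E[(\xi I(\phi))^2]<\infty$ is equivalent to $\xi\phi\in\II_\GG$, for which the conditional isometry of $I$ with respect to $\Ff^\Lambda$ is the natural tool; once that equivalence is in hand, the remaining limit passages are routine dominated-convergence arguments.
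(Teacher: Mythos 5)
The paper does not actually prove this lemma: it is quoted from \cite{DS2014} with no argument given, so there is no in-text proof to compare against. Your proposal is a correct, self-contained proof and follows the route one would expect for this statement: the identification $\Ff^\Lambda=\Gg_0$ makes $\xi$ act as a ``constant'' for $\GG$-predictability, the identity is immediate for elementary integrands, and the two limit passages (in $\phi$ for bounded $\xi$, then truncation in $\xi$) are handled correctly via the isometry of $I$. The one genuinely nontrivial ingredient is the conditional isometry $\E[I(\phi)^2\mid\Ff^\Lambda]=\E\big[\int_0^T\int_\RR \phi_s(z)^2\,\Lambda(ds,dz)\mid\Ff^\Lambda\big]$, which you use to show that $\xi I(\phi)\in L^2$ if and only if $\xi\phi\in\II_\GG$, thereby giving real content to the clause ``whenever either side exists''; your justification (conditional centering and conditional orthogonality of $\mu$ given $\Ff^\Lambda\subseteq\Gg_t$, checked on elementary fields and extended by density) is the standard and correct one in this doubly stochastic setting, though in a fully written-out version you should spell out that extension by density rather than only assert it. No gaps beyond that level of detail.
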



\begin{remark}
It is easy to see that the random field $\mu$ is also a martingale random field with respect to $\FF$ and the non-anticipating integration can be done also with respect to $\FF$ as for $\GG$. 
We denote $\II_\FF$ the corresponding set of integrands. However, results such as Lemma \ref{lemma:f_alpha_and integration} and the forthcoming representation would not hold. See also \cite[Remark 4.4]{Sjursen2011}. 

We remark that $\Ff^\mu_t := \sigma\{ \mu(\Delta), \; \Delta \subseteq [0,t]\times \R\} = \sigma\{I(\phi 1_{\Delta}), \Delta \subseteq [0,t] \times \R),\; \phi\in\II_\FF\}$ (indeed $\mu(\Delta)=I(\ind_{\Delta})$) and
$\Gg_t := \sigma\{ \mu(\Delta), \; \Delta \subseteq [0,t] \times \R; \Lambda(\Delta), \Delta \subseteq [0,T] \times \R\} = \sigma\{I(\phi 1_{\Delta}), \Delta \subseteq [0,t] \times \R),\; \phi\in\II_\GG\}$
\label{remark:about_F}
\end{remark}

The following representation theorems are given in \cite{DS2014}.

\begin{theorem}
{\bf Integral representation theorem.}
Assume $\xi \in L^2\big(\Omega,\Ff,\mathbb{P}\big)$. Then there exists a unique $\phi \in \II_\GG$ such that 
\begin{equation}
\xi = \E\big[ \xi \ins \big| \Ff^{\Lambda} \big] + \int\limits_0^T\int\limits_\RR \phi _s(z) \ins \mu(ds,dz).
\label{eq:ito_representation}
\end{equation}
\label{theorem:ito_representation}
\end{theorem}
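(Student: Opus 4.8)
The plan is the usual two-step argument for a predictable representation theorem: dispose of uniqueness by the isometry of $I$, and obtain existence by showing that the set of random variables admitting a representation of the form \eqref{eq:ito_representation} is a closed subspace of $L^2(\Omega,\Ff,\Prob)$ containing a total family. First I would record that $\E\big[I(\phi)\,\big|\,\Ff^{\Lambda}\big]=0$ for every $\phi\in\II_\GG$: this holds for $\GG$-simple integrands by conditioning on $\Gg_t$ and invoking \eqref{eq:conditional_independence_explained} together with $\Ff^{\Lambda}\subseteq\Gg_t$ and the tower property, and it extends to all of $\II_\GG$ by the $L^2$-continuity of $\E[\,\cdot\,|\Ff^{\Lambda}]$ and of $I$. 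In particular $I(\phi)$ is orthogonal to $L^2(\Omega,\Ff^{\Lambda},\Prob)$. Uniqueness follows at once: if $\xi=\eta_1+I(\phi_1)=\eta_2+I(\phi_2)$ with $\eta_i\in L^2(\Omega,\Ff^{\Lambda},\Prob)$ and $\phi_i\in\II_\GG$, taking $\E[\,\cdot\,|\Ff^{\Lambda}]$ gives $\eta_1=\E[\xi|\Ff^{\Lambda}]=\eta_2$, hence $I(\phi_1)=I(\phi_2)$, and the isometry forces $\phi_1=\phi_2$ in $\II_\GG$.

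For existence, let $\Hh\subseteq L^2(\Omega,\Ff,\Prob)$ be the set of $\xi$ of the form \eqref{eq:ito_representation}. By the isometry and the contraction property of $\E[\,\cdot\,|\Ff^{\Lambda}]$ it is a closed linear subspace: if $\xi_n=\E[\xi_n|\Ff^{\Lambda}]+I(\phi_n)\to\xi$ in $L^2$, then $\E[\xi_n|\Ff^{\Lambda}]\to\E[\xi|\Ff^{\Lambda}]$, so $I(\phi_n)$ is Cauchy, $(\phi_n)$ is Cauchy in $\II_\GG$ with some limit $\phi$, and $\xi=\E[\xi|\Ff^{\Lambda}]+I(\phi)\in\Hh$. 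Trivially $L^2(\Omega,\Ff^{\Lambda},\Prob)\subseteq\Hh$ (take $\phi\equiv0$). To exhibit a total family in $\Hh$ I would take, for bounded $\Ff^{\Lambda}$-measurable integrands $\phi$ with $\phi_s(z)>\delta-1$ for some $\delta>0$, the Dol\'eans--Dade exponential $\mathcal{E}_t(\phi)$ of the martingale $t\mapsto I(\phi\ind_{[0,t]\times\RR})$, which under these bounds and the standing moment assumptions on $\lambda^B,\lambda^H,\nu$ is a true $L^2$-martingale solving $\mathcal{E}_t(\phi)=1+\int_0^t\int_\RR\mathcal{E}_{s^-}(\phi)\,\phi_s(z)\,\mu(ds,dz)$. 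Since $\phi$ is $\Ff^{\Lambda}$-measurable, Lemma \ref{lemma:f_alpha_and integration} allows one to multiply by any bounded $\zeta\in L^2(\Omega,\Ff^{\Lambda},\Prob)$ to get $\zeta\,\mathcal{E}_T(\phi)=\zeta+I\big(\zeta\,\mathcal{E}_{\cdot^-}(\phi)\,\phi\big)$ with $\E[\zeta\,\mathcal{E}_T(\phi)|\Ff^{\Lambda}]=\zeta$, so $\zeta\,\mathcal{E}_T(\phi)\in\Hh$. Finally one checks that the linear span of these random variables is dense in $L^2(\Omega,\Ff,\Prob)$: conditionally on $\Ff^{\Lambda}$ the noise $\mu$ is a Gaussian random measure plus an independent compensated Poisson random measure with deterministic characteristics $\Lambda^B$ and $\nu\otimes\Lambda^H$, for which such exponentials are total in the conditional $L^2$ space by the classical characteristic-functional argument; integrating over $\Ff^{\Lambda}$ and recalling $\Ff=\Ff_T=\Ff^B_T\vee\Ff^H_T\vee\Ff^{\Lambda}$ upgrades this to totality in $L^2(\Omega,\Ff,\Prob)$, whence $\Hh=L^2(\Omega,\Ff,\Prob)$.

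I expect the density step to be the main obstacle: one has to run the characteristic-functional argument ``with $\Lambda$ frozen as a parameter'' and verify that the conclusion lifts measurably to the unconditional space, while keeping the exponential martingales square-integrable under only the stated moment hypotheses -- which is precisely why one restricts attention to bounded $\Ff^{\Lambda}$-measurable integrands. An alternative, and the route used in \cite{Sjursen2011} and \cite{DS2014}, is to establish the conditional Wiener--It\^o chaos decomposition of $L^2(\Omega,\Ff,\Prob)$ relative to $\Ff^{\Lambda}$ and then collapse all chaoses of order $\ge 1$ into the single first-order integral $I(\phi)$, with $\phi_s(z)=\sum_{n\ge 1}n\,J_{n-1}\big(g_n(\cdot,(s,z))\big)$; there the delicate points are the measurable dependence of the kernels $g_n$ on $\Lambda$ and the verification that $\phi\in\II_\GG$.
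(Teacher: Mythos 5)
The paper itself contains no proof of Theorem \ref{theorem:ito_representation}: it is imported from \cite{DS2014}, and Remark \ref{remark:integral_representations} records that the proof there proceeds by classical density arguments inspired by \cite{Oksendal2005} and \cite{Lokka2005}, with \cite{Sjursen2011} providing the chaos/orthogonal-polynomial alternative you mention at the end. Your outline follows exactly that route: uniqueness from $\E[I(\phi)\,|\,\Ff^\Lambda]=0$ plus the isometry, existence from closedness of the set of representable variables together with totality of $\Ff^\Lambda$-modulated Dol\'eans exponentials, using Lemma \ref{lemma:f_alpha_and integration} to absorb bounded $\Ff^\Lambda$-measurable factors into the integrand. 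The uniqueness and closedness steps are correct as written.

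The one step that does not hold as stated is the one you yourself flag as the main obstacle, and your proposed fix is not sufficient: restricting to bounded integrands does not make $\zeta\,\mathcal{E}_T(\phi)$ square integrable, since $\E\big[\mathcal{E}_T(\phi)^2\,\big|\,\Ff^\Lambda\big]=\exp\big\{\int_0^T\int_\R \phi_s(z)^2\,\Lambda(ds,dz)\big\}$, which is of order $\exp\{C\,\Lambda([0,T]\times\R)\}$, while the standing hypothesis on the time-change is only $\E\big[\int_0^T\lambda_t\,dt\big]<\infty$; so your candidate total family need not lie in $L^2(\Omega,\Ff,\Prob)$ at all, and the density step collapses as literally written. (For infinite $\nu$ one must in addition control $\phi$ near $z=0$ so that $\int_\Rr\phi^2\,d\nu<\infty$.) The standard repair, consistent with the conditional structure you describe, is to localize in the $\Ff^\Lambda$-direction: take $\zeta$ bounded, $\Ff^\Lambda$-measurable and supported on $\{\Lambda([0,T]\times\R)\le n\}$, or equivalently run the orthogonality argument conditionally on $\Ff^\Lambda$ -- where the characteristics are frozen, the exponentials are conditionally square integrable and total by the classical Gaussian/Poisson characteristic-functional argument -- and then integrate over $\Ff^\Lambda$, using $\Ff_T=\Ff^B_T\vee\Ff^H_T\vee\Ff^\Lambda$. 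With that modification your argument closes and coincides with the cited proof.
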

\noindent 
Note that the two summands in \eqref{eq:ito_representation} are orthogonal. Here $\E[ \xi \ins| \Ff^{\Lambda}]$ represents the stochastic component of $\xi$ that cannot be recovered by integration on $\mu$.

\begin{remark}
The existence of such a representation is treated in \cite[Chapter 3]{Jacod2003}, where the result is obtained after a discussion on the solution of \emph{the martingale problem}.
In \cite{DS2014}, the existence and uniqueness of the above representation is proved by classical density arguments inspired by \cite[Section 4]{Oksendal2005} and \cite{Lokka2005}. 
In \cite{Sjursen2011}, the representation is given with respect to $\Ht$ using orthogonal polynomials. There, an \emph{explicit formula} for the integrand $\phi$ is derived by means of the non-anticipating derivative with respect to $\GG$, see \cite[Theorem 5.1]{Sjursen2011}. This result holds for more general choices of $\Lambda^H$, but with an assumption on the moments.
The non-anticipating derivative is well-definied with respect to any martingale random-field with orthogonal values and is an operator on the whole $L^2(\Omega,\F,\Prob)$. The random variable $\xi^0 = E[\xi \vert \F^\Lambda_T]$ is characterized by having non-anticipating derivative identically null.

There are other related results in the literature, e.g. in \cite[Proposition 41]{Yablonski2007} the same representation is proved for a class of Malliavin differentiable random variables (Clark-Ocone type results).

If an $\Ff^H_T$-measurable $\xi$ is considered, then an integral representation is given
in the general context of (marked) point processes, see for instance \cite[Theorem 4.12 and 8.8]{Bremaud1981} or \cite{Davis1976,Boel1975,Jacod1975}. Theorem \ref{theorem:ito_representation} differs in the choice of filtration, which also leads to different integrals. In \cite{Bremaud1981,Davis1976,Boel1975,Jacod1975}, the integrator in the representation theorem is given by $H-\vartheta$, where $\vartheta$ is $\mathbb{F}^H$-predictable compensator of $H$. Our $\Lambda^H$ is not $\mathbb{F}^H$-predictable. 
\label{remark:integral_representations}
\end{remark}

\begin{theorem}
{\bf Martingale representation theorem.}
Assume $M_t$, $t\in [0,T]$, is a $\GG$-martingale. Then there exists a unique $\phi \in \II_\GG$ such that 
\begin{equation*}
M_t = \E\big[ M_T \ins \big| \Ff^\Lambda \big] + \int\limits_0^t \int\limits_\RR \phi _s(z) \ins \mu(ds,dz), \quad t\in [0,T].
\end{equation*}
\label{Teorem:G_martingales}
\end{theorem}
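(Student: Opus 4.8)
The plan is to deduce the statement directly from the Integral Representation Theorem~\ref{theorem:ito_representation} applied to the terminal value. Since $M$ is a (square-integrable) $\GG$-martingale, we have $M_T\in L^2(\Omega,\Ff,\Prob)$, so Theorem~\ref{theorem:ito_representation} provides a unique $\phi\in\II_\GG$ with
\[
M_T = \E\big[M_T\mid\Ff^\Lambda\big] + \int_0^T\int_\RR \phi_s(z)\,\mu(ds,dz).
\]
I would then set
\[
N_t := \E\big[M_T\mid\Ff^\Lambda\big] + \int_0^t\int_\RR \phi_s(z)\,\mu(ds,dz), \qquad t\in[0,T],
\]
so that $N_T = M_T$ a.s., and the whole argument reduces to showing $N_t = M_t$ for every $t$.

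The first (and essentially only) substantive step is to verify that $N$ is a $\GG$-martingale admitting a c\`adl\`ag version. The term $\E[M_T\mid\Ff^\Lambda]$ is $\Gg_0$-measurable, since $\Gg_0=\Ff^\Lambda$, hence a constant martingale. The indefinite non-anticipating integral $t\mapsto \int_0^t\int_\RR \phi_s(z)\,\mu(ds,dz)$, which is just $I(\phi\,\ind_{[0,t]\times\RR})$, is a $\GG$-martingale by the construction of stochastic integration with respect to the martingale random field $\mu$: it combines the isometry of $I$ with the martingale property \eqref{eq:conditional_independence_explained} of $\mu$ relative to $\GG$, together with conditional orthogonality of values; see \cite{DiNunno2010}. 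Right-continuity of $\GG$ then lets us pass to a c\`adl\`ag modification.

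Once $N$ is a $\GG$-martingale, both $M$ and $N$ satisfy $M_t=\E[M_T\mid\Gg_t]$ and $N_t=\E[N_T\mid\Gg_t]$; combined with $N_T=M_T$ a.s. this gives $M_t=N_t$ a.s. for each $t\in[0,T]$, i.e. the claimed representation (and indistinguishability after choosing c\`adl\`ag versions). For uniqueness, if $\phi,\psi\in\II_\GG$ both represent $M$, subtracting the two identities and evaluating at $t=T$ yields $\int_0^T\int_\RR(\phi_s(z)-\psi_s(z))\,\mu(ds,dz)=0$ in $L^2$, so the isometry forces $\|\phi-\psi\|_{\II_\GG}=0$; alternatively, uniqueness is inherited verbatim from Theorem~\ref{theorem:ito_representation}.

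I expect the martingale property and the c\`adl\`ag modification of the indefinite integral to be the part that needs the most care, as it is the only place where the specific structure of $\mu$ as a martingale random field with conditionally orthogonal values with respect to $\GG$ (Section~\ref{section:integration}) is genuinely used; the rest is the soft ``same terminal value'' argument. Everything else is bookkeeping: identifying $\Gg_0=\Ff^\Lambda$ and recalling $\Gg_T=\Ff_T=\Ff$, so that Theorem~\ref{theorem:ito_representation} is indeed applicable to $M_T$.
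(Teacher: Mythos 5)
Your argument is correct and is essentially the standard derivation behind this statement (which the paper itself does not prove but attributes to \cite{DS2014}): apply Theorem \ref{theorem:ito_representation} to $M_T$, note that $\E[M_T\mid\Ff^\Lambda]$ is $\Gg_0$-measurable and that the indefinite integral is a $\GG$-martingale, and conclude by $M_t=\E[M_T\mid\Gg_t]$, with uniqueness from the isometry. The only point to make explicit is the square-integrability of $M$ (i.e. $M_T\in L^2(\Omega,\Ff,\Prob)$), which you note parenthetically and which is implicitly assumed in the theorem since $\phi\in\II_\GG$ lives in an $L^2$ framework.
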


We observe that, in the case we consider $\mu$ to be a martingale random field with respect to $\FF$, the corresponding results take a different form.
See \cite{DiNunno2010}.
In particular, we have:

\begin{theorem}
\label{thm:ito_representation2}
{\bf Integral representation theorem.}
Assume $\xi \in L^2\big(\Omega,\Ff,\mathbb{P}\big)$. Then there exists a unique $\phi \in \II_\FF$ such that 
\begin{equation}
\xi = \xi^0 + \int\limits_0^T\int\limits_\RR \phi _s(z) \ins \mu(ds,dz),
\label{eq:ito_representation2}
\end{equation}
where $\xi^0$ is a random variable in $L^2\big(\Omega,\Ff,\mathbb{P}\big)$ orthogonal to the integral part.
\label{theorem:ito_representation2}
\end{theorem}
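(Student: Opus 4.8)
The plan is to derive \eqref{eq:ito_representation2} as a Galtchouk--Kunita--Watanabe type orthogonal decomposition in the Hilbert space $L^2(\Omega,\Ff,\Prob)$, rather than from the explicit representation available under $\GG$. By Remark~\ref{remark:about_F}, $\mu$ is also a martingale random field with conditionally orthogonal values with respect to $\FF$, with the same $\sigma$-finite variance measure $m$; hence the construction of the non-anticipating integral in \cite{DiNunno2010} applies verbatim with $\FF$ in place of $\GG$ and produces a linear isometry $I\colon\II_\FF\to L^2(\Omega,\Ff,\Prob)$, where $\II_\FF$ carries the norm \eqref{eq:II_defined} with ``$\FF$-predictable'' replacing ``$\GG$-predictable''. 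Set $\mathcal{K}_\FF:=I(\II_\FF)$, a linear subspace of $L^2(\Omega,\Ff,\Prob)$; the statement ``$\xi^0$ is orthogonal to the integral part'' means precisely $\xi^0\in\mathcal{K}_\FF^{\perp}$.

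The decisive step is that $\mathcal{K}_\FF$ is \emph{closed}. First, $\II_\FF$ is a closed subspace of $L^2\big(\Tt\times\RR\times\Omega,\Bb_\X\times\Ff,\Lambda\times\Prob\big)$: if $\phi_n\to\phi$ there and each $\phi_n$ admits an $\FF$-predictable modification, then along a subsequence $\phi_n\to\phi$ $(\Lambda\times\Prob)$-a.e., so $\phi$ agrees $(\Lambda\times\Prob)$-a.e. with the $\FF$-predictable process obtained as the pointwise limit of these modifications (set equal to $0$ off the convergence set), i.e. $\phi\in\II_\FF$. Hence $\II_\FF$ is complete, and since $I$ is an isometry its image $\mathcal{K}_\FF$ is complete as well, thus closed in $L^2(\Omega,\Ff,\Prob)$.

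Now the orthogonal projection theorem gives $L^2(\Omega,\Ff,\Prob)=\mathcal{K}_\FF\oplus\mathcal{K}_\FF^{\perp}$. Taking $I(\phi)$ to be the orthogonal projection of $\xi$ onto $\mathcal{K}_\FF$ and $\xi^0:=\xi-I(\phi)\in\mathcal{K}_\FF^{\perp}$ yields \eqref{eq:ito_representation2}. For uniqueness of $\phi$: if also $\xi=I(\psi)+\zeta$ with $\psi\in\II_\FF$ and $\zeta\in\mathcal{K}_\FF^{\perp}$, then $I(\phi-\psi)=\zeta-\xi^0\in\mathcal{K}_\FF\cap\mathcal{K}_\FF^{\perp}=\{0\}$, so $I(\phi)=I(\psi)$ and, by the isometry, $\phi=\psi$ in $\II_\FF$ (and then $\zeta=\xi^0$).

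The main obstacle is exactly the closedness of $\mathcal{K}_\FF$, which rests on the isometry of $I$ on $\II_\FF$ together with the completeness of $\II_\FF$; this is the random-field adaptation of the martingale-representation machinery of \cite{Carbone2008} and \cite{DiNunno2010}. Note that, in contrast with Theorem~\ref{theorem:ito_representation}, the remainder $\xi^0$ is not identified here (in particular it need not equal $\E[\xi\mid\Ff^\Lambda]$), consistently with the fact that under $\FF$ the identity $\xi I(\phi)=I(\xi\phi)$ of Lemma~\ref{lemma:f_alpha_and integration} is no longer available; see Remark~\ref{remark:about_F}. Equivalently, one may apply the Galtchouk--Kunita--Watanabe decomposition to the square-integrable $\FF$-martingale $M_t:=\E[\xi\mid\Ff_t]$, writing $M_t=M_0+\int_0^t\int_\RR\phi_s(z)\,\mu(ds,dz)+L_t$ with $L$ a square-integrable $\FF$-martingale strongly orthogonal to $\mu$, and setting $\xi^0:=M_0+L_T$.
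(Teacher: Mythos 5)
Your argument is correct and coincides with the route the paper intends: the paper gives no in-text proof but defers to \cite{DiNunno2010}, where the representation under $\FF$ is obtained exactly as you do it, by orthogonal projection of $\xi$ onto the closed image $\mathcal{K}_\FF=I(\II_\FF)$ of the isometric integral operator (closedness following from completeness of $\II_\FF$ and the isometry), with $\xi^0\in\mathcal{K}_\FF^{\perp}$ being the component whose non-anticipating derivative vanishes. Your reading of ``orthogonal to the integral part'' as orthogonality to the whole space of stochastic integrals is the intended one, and is what makes the uniqueness of $\phi$ go through.
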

In terms of the non-anticipating derivative with respect to $\FF$ as studied in \cite{DiNunno2010}, the random variable $\xi^0$ is characterised by having derivative identically null.

\subsection{Backward stochastic differential equations driven by $\mu$}

The problem of hedging considered in this paper leads to different types of BSDEs depending on the information considered. Hereafter, we give an overview of the results needed in the sequel related to both types. In particular, the comparison theorems will play a central role in the solution of the optimisation problem \eqref{Y3}.
Our references are \cite{DS2014} and \cite{Carbone2008}.

\bigskip
\bigskip
\subsubsection{Information flow $\GG$}
In the case of information flow $\GG$, the BSDE of reference is of the form:
\begin{equation}
Y_t = \xi + \int\limits_t^T g_s\big(\lambda_s,Y_s,\phi_s \big) \ins ds - \int\limits_t^T\int\limits_{\mathbb{R}} \phi_s(z)  \ins \mu(ds,dz), \quad t\in[0,T].
\label{eq:defbsde}
\end{equation}
Given a terminal condition $\xi$ and a driver (or generator) $g$, a  solution is given by the couple of $\GG$-adapted processes $(Y,\phi)$ on $(\Omega, \Ff, \Prob)$ satisfying the equation above. Hereafter, we characterise explicitly the functional spaces in use and the elements of the BSDE to obtain a solution.

Let $S_\GG$ be the space of $\GG$-adapted stochastic processes $Y_t(\omega)$, $t\in [0,T]$, $\omega \in \Omega$, such that 
\begin{equation*}
\| Y \|_{S_\GG} := \sqrt{ \E \bigg[ \sup_{t\in [0, T]} |Y_t|^2 \bigg] } < \infty,
\end{equation*}
and let $\Hg$ be the space of $\GG$-predictable stochastic processes $f _t(\omega)$, $t\in [0,T]$, $\omega \in \Omega$, such that
\begin{equation*}
\E \Bigg[ \int\limits_0^T f_s ^2 \ins ds \Bigg] < \infty.
\end{equation*}
%
Denote $\Phi$ the space of functions $\phi :\RR \to \RR$ such that 
\begin{equation*}
| \phi(0)|^2+ \int\limits_\Rr \phi(z)^2 \ins \nu(dz) < \infty,
\label{eq:Phi}
\end{equation*}
where $\nu$ is the jump measure of the market dynamics.
\begin{defn}
We say that $(\xi,g)$ are \emph{standard parameters} when $\xi\in L^2\big(\Omega,\Ff,\mathbb{P}\big)$ and $g: [0,T]\times [0,\infty)^2 \times \mathbb{R}\times \Phi  \times \Omega \to \mathbb{R}$ such that $g$ satisfies the following conditions:
\begin{itemize}
\item
$ g_\cdot(\lambda, Y,\phi,\cdot) \text{is $\GG$-adapted for all $\lambda\in\LL$, $Y\in S_\GG$, $\phi\in\II_\GG$,}$ \label{eq:f_cond0} 
\item
$ g_\cdot(\lambda_{\cdot} ,0,0, \cdot )  \in \Hg, \text{ for all }\lambda\in\LL$ \label{eq:f_cond1} 
\item
there exists $K_g>0$ for which
\begin{align*}
\big|  &g_t \big( (\lambda^B, \lambda^H), y^{(1)}, \phi^{(1)} \big) - g_t \big((\lambda^B,\lambda^H), y^{(2)}, \phi^{(2)} \big) \big|  \leq K_g \Big( \big| y^{(1)}-y^{(2)} \big|   \\
&+\big| \phi^{(1)}(0) - \phi^{(2)}(0) \big|\sqrt{\lambda^B}   + \sqrt{ \int\limits_\Rr  | \phi^{(1)}(z) - \phi^{(2)}(z) |  ^2 \ins  \nu(dz) } \sqrt{\lambda^H} \Big), 
\end{align*} 
\item
$ \text{for all }(\lambda^B,\lambda^H) \in [0,\infty)^2, y^{(1)},y^{(2)} \in \RR, \text{ and } \phi^{(1)},\phi^{(2)} \in \Phi \; dt\times d\Prob \text { a.e.}$  \nonumber 
\end{itemize}
\label{def:standard_parameters}
\end{defn}

\begin{theorem}
Let $(g,\xi)$ be standard parameters. Then  
there exists a unique couple $(Y,\phi) \in S_\GG \times \II_\GG$ such that
\begin{align}
Y_t &= \xi + \int\limits_t^T g_s\big(\lambda_s,Y_s,\phi_s \big) \ins ds - \int\limits_t^T\int\limits_\RR \phi_s(z) \ins \mu(ds,dz) \nonumber \\
&= \xi + \int\limits_t^T g_s\big(\lambda_s,Y_s,\phi_s \big) \ins ds - \int\limits_t^T \phi_s(0)\ins dB_s - \int\limits_t^T  \int\limits_{\Rr} \phi_s(z) \ins \Ht(ds,dz).
\label{eq:bsde_theorem}
\end{align}
\label{teorem:BSDE_existence_uniqueness}
\end{theorem}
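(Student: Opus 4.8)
The plan is to follow the classical Pardoux--Peng scheme adapted to the martingale random field $\mu$, using the isometry of $I$ and the martingale representation Theorem~\ref{Teorem:G_martingales} in place of the usual Brownian martingale representation. First I would set up a Picard iteration: given $(Y^{(n)},\phi^{(n)})\in S_\GG\times\II_\GG$, define $(Y^{(n+1)},\phi^{(n+1)})$ by solving the degenerate equation with driver $g_s(\lambda_s,Y^{(n)}_s,\phi^{(n)}_s)$, which no longer depends on the unknown. Concretely, the process $\xi+\int_0^T g_s(\lambda_s,Y^{(n)}_s,\phi^{(n)}_s)\,ds$ lies in $L^2(\Omega,\Ff,\Prob)$ (using that $g_\cdot(\lambda_\cdot,0,0,\cdot)\in\Hg$, the Lipschitz bound, and that $Y^{(n)}\in S_\GG$, $\phi^{(n)}\in\II_\GG$), so by Theorem~\ref{theorem:ito_representation} it has a representation with some $\phi^{(n+1)}\in\II_\GG$ and an $\Ff^\Lambda$-measurable residual; taking conditional expectations given $\Gg_t$ and using the martingale property \eqref{eq:conditional_independence_explained} identifies $Y^{(n+1)}_t = \E\big[\xi+\int_t^T g_s(\lambda_s,Y^{(n)}_s,\phi^{(n)}_s)\,ds\,\big|\,\Gg_t\big]$ and yields \eqref{eq:bsde_theorem} for the $(n{+}1)$-th iterate. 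One checks $Y^{(n+1)}\in S_\GG$ via Doob's maximal inequality applied to the martingale part.

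Next I would establish the contraction. Introduce on $S_\GG\times\II_\GG$ the weighted norm $\|(Y,\phi)\|_\beta^2 := \E\big[\int_0^T e^{\beta s}\big(|Y_s|^2 + |\phi_s(0)|^2\lambda^B_s + \int_\Rr|\phi_s(z)|^2\nu(dz)\lambda^H_s\big)\,ds\big]$ for $\beta>0$ large. Writing $\delta Y = Y^{(n+1)}-Y^{(m+1)}$, $\delta\phi=\phi^{(n+1)}-\phi^{(m+1)}$ and applying the It\^o formula for $e^{\beta t}|\delta Y_t|^2$ with respect to $\mu$ (the bracket of the stochastic integral being $\int_0^t\big(|\delta\phi_s(0)|^2\lambda^B_s+\int_\Rr|\delta\phi_s(z)|^2\nu(dz)\lambda^H_s\big)ds$ by the isometry, and the jump part handled by the conditional-orthogonality of $\mu$), taking expectations, and bounding the driver difference by the Lipschitz condition with the Young inequality $2ab\le \varepsilon a^2+\varepsilon^{-1}b^2$, one obtains $\|(\delta Y,\delta\phi)\|_\beta^2 \le \frac{C(K_g)}{\beta}\|(Y^{(n)}-Y^{(m)},\phi^{(n)}-\phi^{(m)})\|_\beta^2$. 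Choosing $\beta$ so that $C(K_g)/\beta<1$ gives a contraction, hence a Cauchy sequence converging to a limit $(Y,\phi)$; passing to the limit in \eqref{eq:bsde_theorem} (using the isometry to pass the stochastic integral through the $L^2$-limit) shows $(Y,\phi)$ solves the BSDE. Uniqueness follows from the same estimate applied to the difference of two solutions. A final refinement upgrades $Y$ from $\Hg$-type control to $S_\GG$: apply It\^o to $e^{\beta t}|Y_t|^2$ again, then Burkholder--Davis--Gundy to the martingale term to control $\E[\sup_t|Y_t|^2]$.

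The main obstacle is not the abstract iteration but making the stochastic calculus with respect to $\mu$ rigorous in the random-field setting: one must justify the It\^o formula and the quadratic-variation identity for integrals against $\mu = B + \Ht$ under the conditional (given $\Ff^\Lambda$) Gaussian/Poisson structure, and in particular verify that the mixed and cross terms vanish by conditional orthogonality \eqref{eq_mu:conditionally_orthogonal_values}. The delicate point is that the driver $g$ depends on $\lambda_s=(\lambda^B_s,\lambda^H_s)$ and the Lipschitz constant is multiplied by $\sqrt{\lambda^B_s}$ and $\sqrt{\lambda^H_s}$; these weights must be matched exactly against the measure $\Lambda$ appearing in the isometry \eqref{eq:II_defined} for the a~priori estimates to close — which is precisely why the norm on $\II_\GG$ is defined with those intensities. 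Beyond this, everything is a routine adaptation of Pardoux--Peng / El Karoui--Peng--Quenez to the present noise, and the details are carried out in \cite{DS2014} and, for the random-field framework, follow \cite{DiNunno2010} and \cite{Carbone2008}.
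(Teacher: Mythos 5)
Your proposal is correct: the paper itself states this theorem without proof, deferring to \cite{DS2014}, and the argument there is exactly the one you sketch — a Picard/fixed-point iteration built on the $\GG$-martingale representation theorem and the It\^o isometry for $\mu$, with the contraction closed in a $\beta$-weighted norm precisely because the Lipschitz weights $\sqrt{\lambda^B}$, $\sqrt{\lambda^H}$ match the intensities in the $\II_\GG$-norm. So your route coincides with the paper's (cited) proof, and no gap is apparent beyond the routine verification of the It\^o formula in the random-field setting, which you correctly flag.
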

\begin{remark} The initial point $Y_0$ of the solution $Y$ is \emph{not} necessarily a (deterministic) constant. From the definition of $\GG$ and \eqref{eq:bsde_theorem}, we see that $Y_0$ is a square integrable $\Ff^\Lambda$-measurable random variable. 
To be specific, we have:
\begin{align*}
Y_0 &= \E \Big[ \xi + \int\limits_0^T g_s\big(\lambda_s,Y_s,\phi_s \big) \ins ds - \int\limits_0^T \phi_s(0)\ins dB_s - \int\limits_0^T  \int\limits_{\Rr} \phi_s(z) \ins \Ht(ds,dz) \ins \Big| \Ff^\Lambda \Big]\\
&= \E \Big[ \xi + \int\limits_0^T g_s(\lambda_s,Y_s,\phi_s) \ins ds \ins \Big| \Ff^\Lambda \Big].
\end{align*}
\end{remark}
For a linear BSDE of the form \eqref{eq:linearBSDE}, there exists an explicit representation of the solution.
\begin{theorem}
\label{linearGG}
Assume we have the following BSDE:
\begin{align}
-dY_t =&\; \Big[ A_tY_t + C_t+ E_t(0) \phi_t(0) \sqrt{\lambda_t^B} + \int\limits_\Rr E_t(z) \phi_t (z) \ins \nu(dz) \sqrt{ \lambda_t^H} \Big] \ins dt  \nonumber \\
 &-\phi_t(0) \ins dB_t   -\int\limits_\Rr \phi_t(z) \ins \tilde{H}(dt,dz), \quad Y_T= \xi,
\label{eq:linearBSDE}
\end{align}
where the coefficients satisfy
\begin{enumerate}
\renewcommand{\theenumi}{\roman{enumi})}
\renewcommand{\labelenumi}{\theenumi}
\item $A$ is a bounded stochastic process, there exists $K_A > 0$ such that $|A_t | \leq K_A$ for all $t\in[0,T]$ $\Prob$-a.s.,
\label{list:boundedAB}
\item $C\in \Hg$, 
\label{list:CinH2}
\item $E \in \II_\GG$,
\item There exists a $K_E > 0$ such that $ 0 \leq E_t(z) < K_E z $ for $z\in\Rr$, and $| E_t(0) | < K_E$ $\;dt\times d \Prob$-a.e.
\label{list:finite_E_integral1}
\end{enumerate}
Then \eqref{eq:linearBSDE} has a unique solution $(Y,\phi)$ in $S_\GG \times \II_\GG$ and $Y$ has representation
\begin{equation*}
Y_t = \E \Big[ \xi \Gamma_T^t + \int\limits_t^T \Gamma^s_t C_s \ins ds \ins \Big| \Gg_t \Big], \quad t\in[0,T],
\end{equation*}
where
\begin{align*}
\Gamma_s^t :=&\; \exp\Big\{ \int\limits_t^s A_u -\frac{1}{2} E_u(0)^2 \ind_{\{\lambda_u^B\neq 0\}} \ins du + \int\limits_t^s E_u(0) \frac{\ind_{\{\lambda_u^B\neq 0\}} }{\sqrt{\lambda_u^B}} \ins dB_u \\
& + \int\limits_t^s \int\limits_\Rr \Big[ \ln\big(1+E_u(z)\frac{\ind_{\{\lambda_u^H\neq 0\}} }{\sqrt{\lambda_u^H}}  \big) - E_u(z)\frac{\ind_{\{\lambda_u^H \neq 0\}} }{\sqrt{\lambda_u^H}} \Big] \ins \nu(dz)\lambda^H_u \ins du \\
& + \int\limits_t^s \int\limits_\Rr \ln\big(1+E_u(z)\frac{\ind_{\{\lambda_u^H\neq 0\}} }{\sqrt{\lambda_u^H}} \big)  \ins \tilde{H}(du,dz) \Big\}, \quad s \geq t.
\end{align*}
\label{teorem:theorem_linear_BSDEs}
\end{theorem}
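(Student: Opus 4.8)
\textbf{Existence and uniqueness.} The plan is first to cast \eqref{eq:linearBSDE} into the form \eqref{eq:defbsde} with generator
\[
g_t\big((\lambda^B,\lambda^H),y,\phi\big) := A_t y + C_t + E_t(0)\phi(0)\sqrt{\lambda^B} + \int_\Rr E_t(z)\phi(z)\ins \nu(dz)\sqrt{\lambda^H},
\]
and to check that $(g,\xi)$ are standard parameters in the sense of Definition~\ref{def:standard_parameters}. The $\GG$-adaptedness of $g_\cdot(\lambda,Y,\phi,\cdot)$ and the requirement $g_\cdot(\lambda_\cdot,0,0,\cdot)=C_\cdot\in\Hg$ are immediate from i)--iii). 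For the Lipschitz estimate, the only non-obvious term is the integral one, which by Cauchy--Schwarz and iv) is bounded by $\big(\int_\Rr E_t(z)^2\nu(dz)\big)^{1/2}\big(\int_\Rr|\phi^{(1)}(z)-\phi^{(2)}(z)|^2\nu(dz)\big)^{1/2}\sqrt{\lambda^H}$, with $\int_\Rr E_t(z)^2\nu(dz)\le K_E^2\int_\Rr z^2\nu(dz)<\infty$; together with $|A_t|\le K_A$ and $|E_t(0)|<K_E$ this yields the Lipschitz condition with $K_g=\max\{K_A,\,K_E,\,K_E(\int_\Rr z^2\nu(dz))^{1/2}\}$. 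Theorem~\ref{teorem:BSDE_existence_uniqueness} then produces the unique solution $(Y,\phi)\in S_\GG\times\II_\GG$.

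\textbf{The adjoint process.} Fix $t$. I would recognise $s\mapsto\Gamma_t^s$ as the Dol\'eans--Dade exponential of the semimartingale $L^t_s := \int_t^s A_u\ins du + \int_t^s E_u(0)\frac{\ind_{\{\lambda_u^B\neq0\}}}{\sqrt{\lambda_u^B}}\ins dB_u + \int_t^s\int_\Rr E_u(z)\frac{\ind_{\{\lambda_u^H\neq0\}}}{\sqrt{\lambda_u^H}}\ins\Ht(du,dz)$; equivalently, by the It\^o formula for integrals with respect to $\mu$ (cf. \cite{DS2014}), $\Gamma_t^\cdot$ is the unique solution of $d\Gamma_t^s=\Gamma_t^{s^-}\,dL^t_s$, $\Gamma_t^t=1$. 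It is strictly positive because the arguments of the logarithms are $\ge1$ ($E_u(z)\ge0$). The integrals defining $L^t$ are genuinely square integrable in spite of the singular factors: by iv) one has $\big(E_u(0)\ind_{\{\lambda_u^B\neq0\}}/\sqrt{\lambda_u^B}\big)^2\lambda_u^B=E_u(0)^2\ind_{\{\lambda_u^B\neq0\}}\le K_E^2$ and $\int_\Rr\big(E_u(z)\ind_{\{\lambda_u^H\neq0\}}/\sqrt{\lambda_u^H}\big)^2\nu(dz)\lambda_u^H=\int_\Rr E_u(z)^2\ind_{\{\lambda_u^H\neq0\}}\nu(dz)\le K_E^2\int_\Rr z^2\nu(dz)$. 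Using this, the boundedness of $A$ and $E_\cdot(0)$, and maximal/moment estimates for integrals against $\mu$, I would establish the key bound $\E\big[\sup_{s\in[t,T]}(\Gamma_t^s)^2\big]<\infty$.

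\textbf{Integration by parts.} Next I would apply the product rule for semimartingales to $\Gamma_t^s Y_s$ on $[t,T]$, using $dY_s=-g_s\ins ds+\phi_s(0)\ins dB_s+\int_\Rr\phi_s(z)\ins\Ht(ds,dz)$. In the predictable finite-variation part, the contribution $-\Gamma_t^{s^-}A_s Y_{s^-}$ coming from the drift of $Y$ cancels $+\Gamma_t^{s^-}A_s Y_{s^-}$ coming from the drift of $\Gamma_t^\cdot$; the predictable covariation of $\Gamma_t^\cdot$ and $Y$ contributes, through $d\langle B\rangle_s=\lambda_s^B\ins ds$ for the continuous part and the $\Ht$-compensator $\nu(dz)\lambda_s^H\ins ds$ for the jump part, exactly the term $+\Gamma_t^{s^-}\big(E_s(0)\phi_s(0)\sqrt{\lambda_s^B}+\int_\Rr E_s(z)\phi_s(z)\nu(dz)\sqrt{\lambda_s^H}\big)\ins ds$, which cancels the remaining two terms in the drift of $Y$. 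Only $-\Gamma_t^{s^-}C_s\ins ds$ survives, so that $s\mapsto\Gamma_t^s Y_s+\int_t^s\Gamma_t^u C_u\ins du$ is a $\GG$-local martingale on $[t,T]$.

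\textbf{Conclusion.} Let $(\tau_n)$ be a localising sequence. By the bound of the second step, Cauchy--Schwarz, $Y\in S_\GG$ and $C\in\Hg$, the stopped processes are all dominated by the integrable variable $\big(\sup_{s\in[t,T]}\Gamma_t^s\big)\big(\sup_{s\in[t,T]}|Y_s|+\int_t^T|C_u|\ins du\big)$, so dominated convergence upgrades the local martingale to a true $\GG$-martingale on $[t,T]$. Taking $\E[\,\cdot\,|\Gg_t]$ and comparing its values at $s=t$ (namely $\Gamma_t^tY_t=Y_t$) and $s=T$ (namely $\Gamma_t^T\xi+\int_t^T\Gamma_t^sC_s\ins ds$, since $Y_T=\xi$) yields the stated representation. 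The main obstacle is the second step: controlling $\E[\sup_s(\Gamma_t^s)^2]$ for an exponential whose jump coefficients $E_s(z)/\sqrt{\lambda_s^H}$ are a priori unbounded --- this is precisely where condition iv) (the growth bound $E_s(z)<K_E z$ together with $\int_\Rr z^2\nu(dz)<\infty$) is essential, and it is also what makes the localisation argument in the last step legitimate.
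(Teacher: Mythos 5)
Your sketch is correct and is essentially the argument behind this result: note that the paper itself gives no proof of Theorem \ref{teorem:theorem_linear_BSDEs} — it is quoted from \cite{DS2014} — and the proof there proceeds exactly along your lines (check standard parameters and invoke Theorem \ref{teorem:BSDE_existence_uniqueness} for existence/uniqueness, identify $\Gamma^t_\cdot$ as the Dol\'eans exponential of the auxiliary semimartingale, apply the It\^o product rule so that only the $-\Gamma C$ drift survives, then localize and take conditional expectations). The one step you leave as an assertion, $\E\big[\sup_{s\in[t,T]}(\Gamma_t^s)^2\big]<\infty$, does hold for precisely the reason you isolate: $E_u(0)^2\ind_{\{\lambda^B_u\neq 0\}}\le K_E^2$ and $\int_\Rr E_u(z)^2\ind_{\{\lambda^H_u\neq 0\}}\,\nu(dz)\le K_E^2\int_\Rr z^2\,\nu(dz)$ give a deterministic bound on the predictable quadratic characteristic of the driving local martingale, so a Burkholder--Davis--Gundy estimate combined with Gronwall's lemma (after the usual localization) yields the maximal second-moment bound and legitimizes the dominated-convergence step.
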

Note that $\Gamma^s_t = \frac{\Gamma_s^0}{\Gamma_t^0}$.

\bigskip
The next result is a comparison theorem. This result is crucial in the solution of the optimisation problem \eqref{Y3}.

\begin{theorem}  {\bf Comparison theorem.}
\label{thm:comparison}
Let $(g^{(1)},\xi^{(1)})$ and $(g^{(2)},\xi^{(2)})$ be two sets of standard parameters for the BSDE's with solutions $(Y^{(1)},\phi^{(1)})$, $(Y^{(2)},\phi^{(2)})\in S_\GG \times \II_\GG$. Assume that
\begin{equation*}
g^{(2)}_t(\lambda,y,\phi,\omega) = f_t\Big(y, \phi(0) \kappa (0) \sqrt{\lambda^B}  ,\int\limits_\Rr \phi(z) \kappa (z) \ins \nu(dz) \sqrt{\lambda^H},\omega  \Big),
\end{equation*}
where $\kappa \in \II_\GG$ satisfies condition \ref{list:finite_E_integral1} from Theorem \ref{teorem:theorem_linear_BSDEs} and $f$ is a function $f: [0,T] \times \mathbb{R} \times \mathbb{R} \times \mathbb{R}\times \Omega \to \mathbb{R}$ which satisfies, for some $K_f>0$,
\begin{equation*}
|f_t(y,b,h) - f_t(y',b',h')|  \leq K_f\Big( |y-y'| + |b-b'| + |h-h'| \Big), \label{eq:h_lipschitz} \\
\end{equation*}
$dt\times d\Prob$ a.e. and 
\begin{equation*}
\E \Big[ \int\limits_0^T |f_t(0,0,0)|^2 \ins dt \Big] < \infty. \end{equation*}
If $\xi^{(1)} \leq \xi^{(2)}$ $\Prob$-a.s. and $g^{(1)}_s(\lambda_s,Y^{(1)}_s,\phi^{(1)}_s ) \leq g^{(2)}_s(\lambda_s,Y^{(1)}_s,\phi^{(1)}_s )$ $dt\times d\Prob$-a.e., then
\begin{equation*}
Y^{(1)}_t \leq Y^{(2)}_t  \quad dt\times d\Prob \text{-a.e.}
\end{equation*}
\end{theorem}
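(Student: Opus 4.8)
The plan is to rewrite the equation solved by the difference $\delta Y := Y^{(1)}-Y^{(2)}$ as a \emph{linear} BSDE of the form \eqref{eq:linearBSDE} and then read off the sign of $\delta Y$ from the explicit representation in Theorem~\ref{teorem:theorem_linear_BSDEs}. Subtracting the two equations \eqref{eq:bsde_theorem}, the pair $(\delta Y,\delta\phi)\in S_\GG\times\II_\GG$ with $\delta\phi:=\phi^{(1)}-\phi^{(2)}$ satisfies a BSDE with terminal value $\delta Y_T=\xi^{(1)}-\xi^{(2)}\le 0$ $\Prob$-a.s.\ and driver
\begin{equation*}
C_s+\big[g^{(2)}_s(\lambda_s,Y^{(1)}_s,\phi^{(1)}_s)-g^{(2)}_s(\lambda_s,Y^{(2)}_s,\phi^{(2)}_s)\big],\qquad C_s:=g^{(1)}_s(\lambda_s,Y^{(1)}_s,\phi^{(1)}_s)-g^{(2)}_s(\lambda_s,Y^{(1)}_s,\phi^{(1)}_s).
\end{equation*}
By hypothesis $C_s\le 0$ $dt\times d\Prob$-a.e., and combining the standard-parameter assumptions on $(\xi^{(i)},g^{(i)})$ with $Y^{(i)}\in S_\GG$, $\phi^{(i)}\in\II_\GG$ and the bounds $|\kappa_s(0)|<K_\kappa$, $\kappa_s(z)^2\le K_\kappa^2z^2$ inherited from $\kappa$ satisfying condition~\ref{list:finite_E_integral1} of Theorem~\ref{teorem:theorem_linear_BSDEs}, one checks that $C\in\Hg$.

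Next I would linearise the second bracket using the factorisation of $g^{(2)}$ through $f$. Writing $b^{(i)}_s:=\phi^{(i)}_s(0)\kappa_s(0)\sqrt{\lambda^B_s}$ and $h^{(i)}_s:=\int_\Rr\phi^{(i)}_s(z)\kappa_s(z)\,\nu(dz)\sqrt{\lambda^H_s}$ we have $g^{(2)}_s(\lambda_s,Y^{(i)}_s,\phi^{(i)}_s)=f_s(Y^{(i)}_s,b^{(i)}_s,h^{(i)}_s)$. Telescoping through $(Y^{(1)},b^{(1)},h^{(1)})\rightarrow(Y^{(2)},b^{(1)},h^{(1)})\rightarrow(Y^{(2)},b^{(2)},h^{(1)})\rightarrow(Y^{(2)},b^{(2)},h^{(2)})$ and dividing each of the three successive differences by the corresponding increment — setting the quotient to $0$ where that increment vanishes — produces $\GG$-predictable processes $A,\beta,\gamma$, each bounded by $K_f$, for which the second bracket equals
\begin{equation*}
A_s\,\delta Y_s+\beta_s\kappa_s(0)\sqrt{\lambda^B_s}\,\delta\phi_s(0)+\gamma_s\sqrt{\lambda^H_s}\int_\Rr\kappa_s(z)\,\delta\phi_s(z)\,\nu(dz).
\end{equation*}
Thus $(\delta Y,\delta\phi)$ solves a linear BSDE of the form \eqref{eq:linearBSDE} with $A$ bounded, with $C\in\Hg$ satisfying $C\le 0$, and with $E_s(0):=\beta_s\kappa_s(0)$, $E_s(z):=\gamma_s\kappa_s(z)$, the latter to be checked to satisfy condition~\ref{list:finite_E_integral1} precisely because $\kappa$ does.

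Theorem~\ref{teorem:theorem_linear_BSDEs} then yields the representation
\begin{equation*}
\delta Y_t=\E\Big[(\xi^{(1)}-\xi^{(2)})\,\Gamma_T^t+\int_t^T\Gamma_s^t\,C_s\,ds\,\Big|\,\Gg_t\Big],\qquad t\in[0,T],
\end{equation*}
with $\Gamma_s^t$ the exponential stated there, so that $\Gamma_s^t>0$. Since $\xi^{(1)}-\xi^{(2)}\le 0$ $\Prob$-a.s.\ and $C_s\le 0$ $dt\times d\Prob$-a.e., the right-hand side is nonpositive, hence $\delta Y_t\le 0$, i.e.\ $Y^{(1)}_t\le Y^{(2)}_t$, $dt\times d\Prob$-a.e.

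I expect the main obstacle to be the linearisation step and, above all, the verification that the jump coefficient $E(z)=\gamma\kappa(z)$ of the linearised equation still fulfils hypothesis~\ref{list:finite_E_integral1} of Theorem~\ref{teorem:theorem_linear_BSDEs} — in particular that $1+E_u(z)\,\ind_{\{\lambda^H_u\neq 0\}}/\sqrt{\lambda^H_u}$ stays positive, so that $\Gamma$ is a well-defined, strictly positive process. This is exactly the situation for which the structural hypothesis on $g^{(2)}$ (that its $\phi$-dependence factors through the fixed $\kappa\in\II_\GG$, itself satisfying condition~\ref{list:finite_E_integral1}) is imposed, and it is where the sign and size of the incremental-ratio process $\gamma$ must be controlled. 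The remaining points — that $A,\beta,\gamma$ are $\GG$-predictable and bounded and that $C\in\Hg$ — are routine consequences of the Lipschitz conditions and of the integrability built into $S_\GG$, $\II_\GG$ and the standard-parameter assumptions.
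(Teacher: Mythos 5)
Your route is the intended one: the paper does not reprove this theorem (it is quoted from \cite{DS2014}), but the expected argument --- and the one the paper makes explicit in the analogous $\FF$-case through Theorem \ref{thm:carbonethm} and Lemma \ref{lem:carbonelemma} --- is precisely your scheme: subtract the two equations, split the driver difference into the nonpositive term $C$ plus the telescoped increments of $g^{(2)}$, use the factorisation through $f$ to write those increments as $A_s\,\delta Y_s+\beta_s\kappa_s(0)\sqrt{\lambda^B_s}\,\delta\phi_s(0)+\gamma_s\sqrt{\lambda^H_s}\int_\Rr\kappa_s(z)\delta\phi_s(z)\nu(dz)$ with incremental ratios bounded by $K_f$, and then read the sign of $\delta Y$ from the explicit representation of Theorem \ref{teorem:theorem_linear_BSDEs} with its positive kernel $\Gamma$. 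The bookkeeping you call routine ($A$ bounded, $C\in\Hg$ via the Lipschitz bounds and $\phi^{(1)}\in\II_\GG$, the Brownian coefficient $E_t(0)=\beta_t\kappa_t(0)$ needing only the two-sided bound $|E_t(0)|<K_fK_E$) is indeed routine.

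The genuine gap is exactly the point you flag and then leave open, and it cannot be closed by sharper estimates from the stated hypotheses alone: condition \ref{list:finite_E_integral1} of Theorem \ref{teorem:theorem_linear_BSDEs} requires $0\le E_t(z)<K_Ez$, i.e. $\gamma_t\kappa_t(z)\ge0$, whereas the Lipschitz property of $f$ only gives $|\gamma_t|\le K_f$ and says nothing about the sign of the incremental ratio in the jump variable. If $\gamma_t$ can be negative, $1+\gamma_t\kappa_t(z)\ind_{\{\lambda^H_t\neq0\}}/\sqrt{\lambda^H_t}$ need not stay positive, $\Gamma$ (equivalently the implicit change of measure) is not well defined, and in fact the comparison principle for jump-driven BSDEs genuinely fails in this regime: a driver of the form $f(y,b,h)=-ch$ with $c$ large relative to $\kappa$ and $\lambda^H$ produces Barles--Buckdahn--Pardoux-type counterexamples. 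So the missing ingredient is an additional one-sided condition on $f$ in its third (jump-integral) argument --- monotonicity, i.e. $f$ nondecreasing in $h$, as imposed in the source the paper cites for this theorem, or directly the requirement $1+\gamma_t\kappa_t(z)\ind_{\{\lambda^H_t\neq0\}}/\sqrt{\lambda^H_t}>0$. Once $\gamma_t\ge0$ is available, you get $0\le E_t(z)\le K_f\kappa_t(z)<K_fK_Ez$, Theorem \ref{teorem:theorem_linear_BSDEs} applies to $(\delta Y,\delta\phi)$ (which is a solution by construction, hence the unique one), $\Gamma^t_s>0$, and your final sign argument is complete. As written, your proposal identifies the crux correctly but does not supply the fact that resolves it.
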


\bigskip
\bigskip
\subsubsection{Information flow $\FF$}
In the case of information flow $\FF$ the BSDE of reference takes the form:
\begin{align}
Y_t=\xi+\int_t^T \int_\R f_s(Y_s, \phi_s(z) )\langle \mu\rangle (ds,dz) -\int_t^T \int_\R \phi_s(z) \mu (ds,dz) -N_T+N_t,
\label{eq:bsdegen}
\end{align}
where
\begin{enumerate}[(i)]
\item $\mu(dt,dz)$, $t\in [0,T]$, $z\in \R$, is the $(\FF,\Prob)$-martingale random field \eqref{eq:mu_definition},
\item $\langle \mu\rangle (dt,dz)$ is its conditional variance measure, see \cite{DiNunno2010} Theorem 2.1, which is in fact the correspondent to the conditional quadratic variation for martingales, see \cite{Protter2005}, and
\item $N$, with $N_0=0$, is a square integrable $(\FF,\Prob)-$martingale orthogonal to $\mu$, i.e. for every set $A\in\Bb_\R$, for $\mu_t(A):= \mu((0,t]\times A)$, $t \in [0,T]$, the quadratic variation $[ N, \mu(A) ]$ is a uniformly integrable martingale.
\end{enumerate}
Moreover, we have
\[
\langle \mu (A) \rangle_t= \int_0^t\int_A \lambda^B_s \delta_{\{0\}}(dz)ds+  \int_0^t\int_A {1}_{{\R}_0}(z)\nu(dz)\lambda^H _sds.
\]

The existence and uniqueness of the solution of \eqref{eq:bsdegen} is treated adapting Proposition 2.1 and Lemma 2.2 of \cite{Carbone2008} to the martingale random field case.
Here we present the variation of these results in the form used later.
The definition of standard parameters is analogous to Definition \ref{def:standard_parameters}, but referred to $\FF$. The same is intended for the spaces involved. 
BSDEs of the type \eqref{eq:bsdegen} with standard parameters admit a unique solution, which is characterized by the triple $(Y,\phi,N)$.
\begin{lem}\label{lem:carbonelemma}
Let $a,b,c$ be $\FF$-predictable random fields with $a$ bounded and
$$
\E \Big[ \int_0^T\int_\R b^2_s(z) \langle \mu \rangle (ds,dz) \Big] < \infty.
$$
Let $\mathcal{E}$ be the Dol\'eans exponential of the martingale random field  $\int_0^t \int_\R b_s(z) \mu(ds,dz)$, $t\in [0,T]$, and define
\begin{align*}
\psi_t :=\exp\bigg(\int_0^t \int_\R a_s(z) \langle \mu\rangle (ds,dz) \bigg),\quad\quad\Psi_t=\psi_t\mathcal{E}_t, \quad t \in [0,T].
\end{align*}
Suppose that 
\begin{enumerate}[(i)]
\item $\mathcal{E}$ is a positive uniformly integrable martingale;
\item $\E[(\sup_{t\in[0,T]}\psi_t)^2\mathcal{E}^2_T]<\infty$,
\item $\E \big[ \int_0^T\int_\R \Psi_s(z) c_s(z) \langle \mu \rangle (ds,dz) \big] < \infty.$
\end{enumerate}
If the linear backward equation
\begin{align}
dY_t&=- \int_\R \bigg( a_t(z) Y_t+b_t(z) \phi_t(z) +c_t(z) \bigg) \langle \mu\rangle(ds,dz)+\int_\R\phi_t(z) \mu (dt,dz)+dN_t\nonumber\\
Y_T&=\xi,\label{eq:newlinearbsde}
\end{align}
has solution $(Y,\phi,N)$ in $S_\FF \times\mathcal{I}_{\FF}\times\mathcal{L}_\FF^{2,b}$, where $\mathcal{L}_\FF^{2,b}$ is the space of $L^2-$bounded $(\FF,\Prob)-$martingales, then $Y$ is given by
\begin{align}
Y_t=\E\bigg[\xi\frac{\Psi_T}{\Psi_t}+\int_t^T \int_\R \frac{\Psi_s}{\Psi_t}c_s(z) \langle \mu\rangle (ds,dz) \bigg|\mathcal{F}_t\bigg],\quad0\leq t\leq T.\label{eq:expbsde}
\end{align}
\end{lem}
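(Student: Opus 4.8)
The plan is to apply the integration-by-parts (product) formula for martingale random fields to the product $\Psi_t Y_t$, where $(Y,\phi,N)$ solves the linear BSDE \eqref{eq:newlinearbsde}, and show that $\Psi_t Y_t$ plus a suitable integrated drift term is an $(\FF,\Prob)$-martingale; then taking conditional expectations yields \eqref{eq:expbsde}. This mimics the classical proof of the linear-BSDE representation (as in \cite{Carbone2008}) adapted to our random-field setting.

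\medskip
First I would note that $\Psi_t = \psi_t\,\mathcal{E}_t$ is itself a semimartingale: $\psi$ has finite variation with $d\psi_t = \psi_t\int_\R a_t(z)\,\langle\mu\rangle(dt,dz)$, while $\mathcal{E}$, the Dol\'eans exponential of $\int_0^\cdot\int_\R b_s(z)\,\mu(ds,dz)$, satisfies $d\mathcal{E}_t = \mathcal{E}_{t^-}\int_\R b_t(z)\,\mu(dt,dz)$ and is, by hypothesis (i), a positive uniformly integrable martingale. Applying the product rule to $\psi_t\mathcal{E}_t$ gives $d\Psi_t = \mathcal{E}_{t^-}\,d\psi_t + \psi_t\,d\mathcal{E}_t = \Psi_{t^-}\big(\int_\R a_t(z)\langle\mu\rangle(dt,dz) + \int_\R b_t(z)\mu(dt,dz)\big)$ (no extra covariation term since $\psi$ is continuous of finite variation). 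Then I would compute $d(\Psi_t Y_t)$ by the product rule again, using the dynamics of $Y$ from \eqref{eq:newlinearbsde}: $dY_t = -\int_\R(a_t Y_t + b_t\phi_t + c_t)\langle\mu\rangle(dt,dz) + \int_\R\phi_t\,\mu(dt,dz) + dN_t$. The covariation term $d[\Psi,Y]_t$ contributes $\Psi_{t^-}\int_\R b_t(z)\phi_t(z)\,\langle\mu\rangle(dt,dz)$ (from the $\mu$–$\mu$ bracket, whose compensator is $\langle\mu\rangle$, with $N\perp\mu$ killing the cross term). The key algebraic cancellation is that the finite-variation part of $d(\Psi_t Y_t)$ collapses: the $\Psi a Y$ term from $d\Psi\cdot Y$ cancels against the $-\Psi a Y$ term from $\Psi\cdot dY$, and the $\Psi b\phi$ term from the bracket cancels against the $-\Psi b\phi$ term, leaving only $-\Psi_t\int_\R c_t(z)\,\langle\mu\rangle(dt,dz)$. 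Hence
\begin{equation*}
d\Big(\Psi_t Y_t + \int_0^t\!\!\int_\R \Psi_s c_s(z)\,\langle\mu\rangle(ds,dz)\Big) = \big(\text{local martingale increments in } \mu \text{ and } N\big).
\end{equation*}

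\medskip
Next I would upgrade this local martingale to a true (uniformly integrable) martingale using the integrability hypotheses (ii) and (iii): hypothesis (ii) bounds $\E[(\sup_t\psi_t)^2\mathcal{E}_T^2]$, which together with $\xi\in L^2$ and the a priori bounds on $(Y,\phi,N)$ in $S_\FF\times\mathcal{I}_\FF\times\mathcal{L}^{2,b}_\FF$ controls $\sup_t|\Psi_t Y_t|$, and hypothesis (iii) handles the integrated $c$-term; a Cauchy--Schwarz / Burkholder--Davis--Gundy argument on the $\mu$-integral $\int\Psi_{s^-}(Y_s b_s + \phi_s)\,\mu(ds,dz)$ and on $\int Y_{s^-}\,dN_s + \int\Psi_{s^-}\,dN_s$ (using $a$ bounded, $b\in L^2(\langle\mu\rangle)$) gives the requisite integrability. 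Taking $\E[\cdot\,|\,\Ff_t]$ of the identity at time $T$, using the terminal condition $Y_T=\xi$ and $\Psi_0 Y_0$–initial normalization, and dividing by $\Psi_t$ (legitimate since $\Psi_t=\psi_t\mathcal{E}_t>0$ and $\Psi_t$ is $\Ff_t$-measurable) yields exactly \eqref{eq:expbsde}.

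\medskip
\textbf{Main obstacle.} The delicate point is not the formal cancellation but justifying the product/It\^o formula and the local-to-true-martingale passage \emph{for martingale random fields} rather than ordinary semimartingales. The integrator $\mu(dt,dz)$ carries a spatial variable, so one must argue that the needed product rule and the identification of the $\mu$–$\mu$ bracket compensator as $\langle\mu\rangle(dt,dz)$ hold in the sense of \cite{DiNunno2010} — in practice by reducing to the two scalar integrators $dB_t$ and $\tilde H(dt,dz)$ via \eqref{eq:mu_definition}, where the classical It\^o calculus for time-changed Brownian and Poisson-type noises applies, and checking that the Dol\'eans exponential $\mathcal{E}$ and the Girsanov-type change of measure it induces are compatible with the $\FF$-structure. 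The integrability verifications (ii)–(iii) are then routine given the a priori estimates from the existence theorem for \eqref{eq:bsdegen}, so the real content is bookkeeping the random-field It\^o formula correctly; this is where I expect to lean most heavily on the adaptation of \cite[Proposition 2.1, Lemma 2.2]{Carbone2008}.
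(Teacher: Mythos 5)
Your proposal is correct and is essentially the same argument the paper intends: the paper omits the details and simply states that the proof "follows substantially the same arguments" as Lemma 2.2 of \cite{Carbone2008}, which is precisely the product-rule computation on $\Psi_t Y_t$ (drift cancellation leaving $-\Psi_t c_t$, compensation of the $\mu$--$\mu$ bracket by $\langle\mu\rangle$, orthogonality of $N$, then the local-to-true martingale upgrade via (i)--(iii) and division by $\Psi_t>0$) that you spell out. No discrepancy with the paper's route.
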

\begin{proof}
The result presents weaker assumptions on the coefficients compared to Lemma 2.2 in \cite{Carbone2008} which serve better the applications to come. However, the proof follows substantially the same arguments and it will not be detailed here.
\end{proof}

Recall that the Dol\'ean exponential is positive if the martingale has jumps grater than $-1$. Conditions for uniform integrability can be found in e.g. \cite{LM78}.

\begin{remark}
\label{linearFF}
Whenever expression (\ref{eq:expbsde}) makes sense, it is a solution of the linear equation (\ref{eq:newlinearbsde}). 
\end{remark}

\bigskip
The next result is a comparison theorem for equations of the type \eqref{eq:bsdegen}.

\begin{theorem}
\label{thm:carbonethm}{\bf Comparison theorem.}
Consider two linear BSDEs of the form (\ref{eq:bsdegen}):
\[
Y^{(i)}_t=\xi^{(i)}+\int_t^T \int_\R f^{(i)}_s(Y^{(i)}_s, \phi^{(i)}_s)\langle \mu\rangle(ds,dz)-\int_t^T\int_\R \phi^{(i)}_s(z) \mu(ds,dz) -N^{(i)}_T+N^{(i)}_t,
\]
for $i=1,2$. Define $\delta Y_t :=Y^{(2)}_t-Y^{(1)}_t$, $\delta \phi_t(z) :=\phi^{(2)}_t(z) -\phi^{(1)}_t(z)$, and 
\begin{align*}
a_t(z) &:= \frac{f^{(2)}_t(Y^{(2)}_t,\phi^{(2)}_t(z) )-f^{(2)}_t(Y^{(1)}_t,\phi^{(2)}_t(z))}{\delta Y_t} {1}_{\delta Y_t\neq0},\\
b_t(z) & :=\frac{f^{(2)}_t(Y^{(1)}_t,\phi^{(2)}_t(z)) - f^{(2)}_t(Y^{(1)}_t,\phi^{1}_t(z))}{ \delta \phi_t(z) }{1}_{\delta \phi_t(z)\neq0},\\
c_t(z) &:= f^{(2)}_t(Y^{(1)}_t, \phi^{(1)}_t(z))-f^{(1)}_t(Y^{(1)}_t, \phi^{(1)}_t(z)).
\end{align*}
Then the process $\delta Y$ verifies the linear BSDE:
\begin{align*}
-d \,\delta Y_t &= \int_\R \big[a_t(z) \delta Y_t + b_t(z) \delta \phi_t(z) + c_t(z) \big] \langle \mu \rangle (dt,dz) 
+ \int_\R \delta \phi_t(z) \mu(dt,dz) - d\,\delta N_t,\\
\delta Y_T &= \xi^{(1)} - \xi^{(1)}.
\end{align*}
Assume that $a$ and $b$ verify condition (i) and (ii) in Lemma \ref{lem:carbonelemma}. Assume also that $\xi^{(2)}\geq \xi^{(1)}$ and, for any $t$, 
$c_t(z) \geq0\;\Prob-$a.s. Then, for any $t$, $Y^{(2)}_t\geq Y^{(1)}_t\;\Prob-$a.s.
\end{theorem}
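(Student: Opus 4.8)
The plan is to reduce the comparison of the two nonlinear BSDEs to a single linear BSDE satisfied by the difference $\delta Y$, and then to apply the explicit representation formula from Lemma \ref{lem:carbonelemma} to conclude positivity. First I would verify the linearization: subtracting the two equations of the form \eqref{eq:bsdegen} for $i=1,2$ and inserting the telescoping decomposition
\[
f^{(2)}_s(Y^{(2)}_s,\phi^{(2)}_s) - f^{(1)}_s(Y^{(1)}_s,\phi^{(1)}_s)
= \big[f^{(2)}_s(Y^{(2)}_s,\phi^{(2)}_s) - f^{(2)}_s(Y^{(1)}_s,\phi^{(2)}_s)\big]
+ \big[f^{(2)}_s(Y^{(1)}_s,\phi^{(2)}_s) - f^{(2)}_s(Y^{(1)}_s,\phi^{(1)}_s)\big]
+ c_s,
\]
one recognizes exactly the coefficients $a_s(z)$, $b_s(z)$, $c_s(z)$ as defined in the statement (with the indicator conventions handling the cases $\delta Y_s = 0$ or $\delta\phi_s(z)=0$, where the corresponding increment vanishes anyway). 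This shows $\delta Y$ together with $\delta\phi := \phi^{(2)}-\phi^{(1)}$ and $\delta N := N^{(2)}-N^{(1)}$ solves the linear equation \eqref{eq:newlinearbsde} with data $\xi = \delta\xi = \xi^{(2)}-\xi^{(1)}\ge 0$, and $(\delta Y,\delta\phi,\delta N)\in S_\FF\times\II_\FF\times\LL^{2,b}_\FF$ since both original solutions are in these spaces.

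Next I would check that the hypotheses of Lemma \ref{lem:carbonelemma} are met by $(a,b,c)$. Boundedness of $a$ and square-integrability of $b$ against $\langle\mu\rangle$ follow from the Lipschitz property of $f^{(2)}$ in $y$ and in the $(b,h)$-arguments respectively, exactly as in the classical BSDE comparison argument — the Lipschitz constant $K_f$ bounds $a$, and the $\phi^{(i)}$ being in $\II_\FF$ controls the $L^2$-norm of $b$. Assumptions (i) and (ii) of Lemma \ref{lem:carbonelemma} on the Doléans exponential $\mathcal{E}$ of $\int b\,d\mu$ and on $\psi$ are assumed in the statement of the present theorem; together with $c_s(z)\ge 0$ $\Prob$-a.s. and $\delta\xi\ge 0$, and noting $\Psi_t = \psi_t\mathcal{E}_t > 0$ (positivity of $\psi$ is automatic, positivity of $\mathcal{E}$ from the jumps-$>-1$ remark — which here one should note follows because $b$ enters via a ratio of $f^{(2)}$-increments divided by $\delta\phi$, and one may need the structural Lipschitz form to guarantee the jump bound; if not automatic, it is part of hypothesis (i)), the representation \eqref{eq:expbsde} gives
\[
\delta Y_t = \E\Big[\delta\xi\,\tfrac{\Psi_T}{\Psi_t} + \int_t^T\!\!\int_\R \tfrac{\Psi_s}{\Psi_t}\,c_s(z)\,\langle\mu\rangle(ds,dz)\,\Big|\,\Ff_t\Big]\ \ge\ 0,\qquad t\in[0,T],
\]
$\Prob$-a.s., which is precisely $Y^{(2)}_t\ge Y^{(1)}_t$.

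The main obstacle I anticipate is the verification of the integrability conditions (i)--(iii) of Lemma \ref{lem:carbonelemma} for the \emph{linearized} coefficients, rather than for given nice coefficients: here $b$ is defined as a difference quotient of $f^{(2)}$ along the two solution trajectories, so its control relies on the Lipschitz bound in the $h$-variable of $f$ combined with the a priori estimates $\phi^{(i)}\in\II_\FF$; and the uniform integrability of $\mathcal{E}_t = \mathcal{E}\big(\int b\,d\mu\big)_t$ and the moment bound $\E[(\sup_t\psi_t)^2\mathcal{E}_T^2]<\infty$ are genuinely delicate for martingale random fields with the time-change structure. In the write-up I would either absorb these into the standing hypotheses (as the statement already does by importing conditions (i) and (ii) of Lemma \ref{lem:carbonelemma}) or invoke the uniform-integrability criteria of \cite{LM78}. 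The remaining steps — the linearization algebra and the sign of the conditional expectation — are routine once the representation formula is available.
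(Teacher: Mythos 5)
Your proposal is correct and follows essentially the same route as the paper: linearize the difference of the two equations via the telescoping decomposition defining $a$, $b$, $c$, observe that $(\delta Y,\delta\phi,\delta N)$ solves the linear BSDE \eqref{eq:newlinearbsde}, and conclude nonnegativity from the explicit representation \eqref{eq:expbsde} of Lemma \ref{lem:carbonelemma} together with $\xi^{(2)}-\xi^{(1)}\geq 0$, $c\geq 0$ and $\Psi>0$. The only caveat you raise — that the integrability condition (iii) of Lemma \ref{lem:carbonelemma} for the linearized coefficient $c$ is not explicitly imported — is a feature of the theorem's own statement rather than a defect of your argument, and absorbing it into the standing hypotheses as you suggest is exactly what is intended.
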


\bigskip
\bigskip
\section{Change of measure: shift transformations}
The various possible scenarios considered are given by all probability measures $\Q$ equivalent to $\Prob$ obtained by shift transformation, see \eqref{risk-measure}. 
This is chosen as a feasible set of transformations that allow for an explicit evaluation of the Radon-Nikodym density. Hereafter, we study such transformations. We observe that, in the presence of time-change, such transformations do not lead to a self-preserving structure.
As illustration we can see that the doubly stochastic Poisson random measure will not be of such structure after the measure change.
We also recall that such shift transformations, when applied to L\'evy noises, are actually structure preserving.

\bigskip
We introduce the Radon-Nikodym density process $Z_t= Z^\theta_t$, $t\in [0,T]$, by
\begin{align}
&dZ^\theta_t=Z^\theta_{t^-} \Big( \theta^B_tdB_t+\int_{\Rr}\theta^H_t(z)\tilde{H}(dt,dz) \Big),\label{eq:radoniko}\\
&Z^\theta_0=1,\nonumber
\end{align}
for $\theta\in\mathcal{I}_\GG$ with $\theta_t(0) = \theta_t^B$ and
$\theta_t(z) = \theta_t^H(z)$, $z \in \R_0$, and $\theta^H_t(z)>-1$ $ \Prob\times\Lambda-a.e.$

An explicit strong solution of (\ref{eq:radoniko}) is obtained by application of the It\^o formula:
\begin{align*}\label{eq:density}
Z^\theta_t&=\exp\bigg{(}\int_0^t\theta^B_sdB_s-\int_0^t\frac{1}{2}(\theta^B_s)^2 \lambda^B_s ds\nonumber\\
&\quad+\int_0^t\int_{\Rr}\bigg[\ln\bigg(1+\theta^H_s(z)\bigg)-\theta^H_s(z)\bigg]\nu(dz)\lambda^H_sds \nonumber\\
&\quad+\int_0^t\int_{\Rr}\ln\bigg(1+\theta^H_s(z)\bigg)\tilde{H}(ds,dz)\bigg{)}, \quad t \in [0,T].
\end{align*}
Since we have assumed $\theta^H_t(z)>-1\;\; \Prob\times\Lambda-$a.e., we know that $\ln\bigg(1+\theta^H_s(z)\bigg)$ and the stochastic integration are well-defined. 
A generalized version of the Novikov condition, see \cite{LM78}, ensures uniform integrability of $Z$.

\begin{theorem}
\label{thm:girsanov}
Let $B$ and $\tilde{H}$ be as in Definition \ref{definition:listA} with respect to $\Prob$. 
Assume that $Z_t=Z^\theta_t$, $t\in [0,T]$, with $\theta\in\mathcal{I}_{\GG}$, is a positive uniformly integrable $(\GG,\Prob)-$martingale with $\E[Z_T^2]<\infty$, and define the probability measure $\Q$, equivalent to $\Prob$, by
\[
\frac{d\Q}{d\Prob}=Z_T.
\]
Define $B^{\theta}$ and $\tilde{H}^{\theta}$ by the dynamics
\begin{align*}
dB^{\theta}_t&:=dB_t-\theta^B_t d\Lambda^B_t ,\\
\tilde{H}^{\theta}(dt,dz)&:=\tilde{H}(dt,dz)-\theta^H_t(z)\Lambda^H (dt,dz),
\end{align*}
where we recall that $\Lambda^B(dt, \{0\}) = d\Lambda^B_t = \lambda^B_tdt$ and
$ \Lambda^H(dt,dz) = \nu(dz) \lambda^H_tdt$.
Moreover, for any bounded predictable $\psi$ such that $\int_0^T\int_{\Rr}\psi_t(z) \Lambda^H(dt,dz) < \infty,$ $ \Prob-$a.s., define the process 
\[
M^\theta _t(\psi):=\int_0^t\int_{\Rr}\psi_s(z)\tilde{H}^{\theta}(ds,dz),\quad0\leq t\leq T.
\]

Then $B^{\theta}$ is a continuous $(\GG,\Q)$-martingale and a time-changed $(\GG,\Q)$-Brownian motion. Also, $M^\theta (\psi)$ is a $(\GG,\Q)$-martingale, where $\tilde{H}^{\theta}$ is a $(\GG,\Q)$-martingale random field.

Moreover, if
\begin{align}
\E_{\Q}\bigg[\sup_{t\in[0,T]}\;|[B^{\theta}, M^\theta(\psi)]_t|\;\bigg]<\infty,\label{eq:uniintcondeq}
\end{align}
for $\psi_t(z)={1}_{\Delta}(t,z),\;\Delta\in\mathcal{B}_{[0,T]\times\Rr}$: 
$m(\Delta) = E[\Lambda^H(\Delta)] <\infty$, then $B^{\theta}$ and $M^\theta(\psi)$ are strongly orthogonal under $\Q$.
\end{theorem}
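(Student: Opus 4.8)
The plan is to prove the three assertions — (a) that $B^\theta$ is a continuous $(\GG,\Q)$-martingale that is a time-changed Brownian motion, (b) that $\tilde H^\theta$ is a $(\GG,\Q)$-martingale random field and $M^\theta(\psi)$ is a $(\GG,\Q)$-martingale, and (c) the strong orthogonality under \eqref{eq:uniintcondeq} — in that order, using the Bayes/Girsanov machinery relative to the density process $Z=Z^\theta$. The starting point is the general principle that for a uniformly integrable positive martingale $Z$ with $Z_0=1$, a process $X$ is a $(\GG,\Q)$-martingale if and only if $XZ$ is a $(\GG,\Prob)$-martingale; I would state this once and invoke it throughout. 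Since we are dealing with random fields rather than a single process, I would reduce (a) and (b) to statements about the scalar processes $B^\theta_t$ and $M^\theta_t(\psi)$ (for simple $\psi = \ind_\Delta$), the general random-field assertion following by the usual additivity/$\sigma$-additivity in $L^2$ and a monotone-class argument on $\psi$.

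For step (a): apply the integration-by-parts (product) formula to $B^\theta_t Z_t$. Using $dB^\theta_t = dB_t - \theta^B_t\,d\Lambda^B_t$, the dynamics \eqref{eq:radoniko} of $Z$, and the covariation $d[B,Z]_t = Z_{t^-}\theta^B_t\,d\langle B\rangle_t = Z_{t^-}\theta^B_t\,\lambda^B_t\,dt$ (here the jumps of $Z$ come only from the $\tilde H$-part and so do not contribute to the bracket with continuous $B$), the finite-variation drift $-\theta^B_t Z_{t^-}\,d\Lambda^B_t$ from $B^\theta$ is exactly cancelled by the covariation term, leaving $B^\theta Z$ a local $(\GG,\Prob)$-martingale; the $L^2$-integrability hypotheses on $Z_T$ and $\theta\in\II_\GG$ promote this to a genuine martingale. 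Continuity of $B^\theta$ is immediate since $B$ is continuous (it is the Gaussian part, with no atoms) and $\Lambda^B$ is absolutely continuous. That $B^\theta$ is a \emph{time-changed} $(\GG,\Q)$-Brownian motion then follows by computing its conditional quadratic variation: $\langle B^\theta\rangle_t = \langle B\rangle_t = \Lambda^B_t$ is unchanged by the drift removal and by the equivalent measure change, so by the characterization theorem (Serfozo/Grigelionis, quoted in the excerpt) together with the Lévy-type characterization of continuous martingales with prescribed quadratic variation, $B^\theta$ has the stated structure under $\Q$.

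For step (b): again apply integration by parts to $M^\theta_t(\psi)Z_t$ with $M^\theta_t(\psi) = \int_0^t\int_{\Rr}\psi_s(z)\tilde H^\theta(ds,dz)$ and $\tilde H^\theta(dt,dz) = \tilde H(dt,dz) - \theta^H_t(z)\Lambda^H(dt,dz)$. The covariation $d[M(\psi),Z]_t = Z_{t^-}\big(\int_{\Rr}\psi_t(z)\theta^H_t(z)\,\nu(dz)\big)\lambda^H_t\,dt$ (using that $\tilde H$ is a pure-jump integrator so the bracket picks up the product of the integrands against the compensator $\Lambda^H$) cancels precisely the compensator drift $-Z_{t^-}\int_{\Rr}\psi_t(z)\theta^H_t(z)\Lambda^H(dt,dz)$ built into $M^\theta(\psi)$, so $M^\theta(\psi)Z$ is a $(\GG,\Prob)$-martingale, hence $M^\theta(\psi)$ is a $(\GG,\Q)$-martingale; boundedness of $\psi$ plus $\int_0^T\int_{\Rr}\psi\,\Lambda^H<\infty$ $\Prob$-a.s. and $\E[Z_T^2]<\infty$ give the integrability needed. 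The random-field statement — that $\tilde H^\theta$ is itself a $(\GG,\Q)$-martingale random field in the sense of \cite{DiNunno2010} — then follows by verifying the defining properties: it inherits additivity, $\sigma$-additivity in $L^2(\Q)$, $\GG$-adaptedness, and a finite $\Q$-variance measure, while the martingale property on $\Delta\subset(t,T]\times\Rr$ is exactly the case $\psi=\ind_\Delta$ just proved.

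For step (c): to show $B^\theta$ and $M^\theta(\psi)$ are strongly orthogonal under $\Q$ one must show $[B^\theta,M^\theta(\psi)]$ is a uniformly integrable $(\GG,\Q)$-martingale. Since $B^\theta$ is continuous and $M^\theta(\psi)$ is purely discontinuous (its jumps coincide with those of the pure-jump integral against $\tilde H$), the pathwise bracket $[B^\theta,M^\theta(\psi)]$ is identically zero — this is the standard fact that a continuous process and a pure-jump process have vanishing covariation. Alternatively, and to make the role of the hypothesis \eqref{eq:uniintcondeq} transparent, one checks that $[B^\theta, M^\theta(\psi)]$ is a local $(\GG,\Q)$-martingale (it is zero, hence trivially so) and invokes assumption \eqref{eq:uniintcondeq} — the $\Q$-integrability of its running supremum over $\Delta$ with $m(\Delta)<\infty$ — to upgrade the local martingale to a uniformly integrable one, which is precisely the definition of strong orthogonality used in item (iii) of the $\FF$-BSDE setup. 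I expect the main obstacle to be not the algebra of the covariation cancellations, which is routine, but the careful bookkeeping of \emph{which} measure and \emph{which} filtration each integrability/localization step lives in — in particular justifying that the various local martingales are true martingales under $\Q$ rather than merely under $\Prob$, where one must use $\E[Z_T^2]<\infty$ together with $\theta\in\II_\GG$ and Cauchy–Schwarz to control the relevant $L^1(\Q)$ norms, and, for the strong-orthogonality claim, correctly matching the hypothesis \eqref{eq:uniintcondeq} (stated for indicator $\psi$) to the general statement via the linearity of $M^\theta(\cdot)$ and a density argument.
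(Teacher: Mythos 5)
Your proposal is correct and takes essentially the same route as the paper: a product-rule/drift-cancellation (Bayes) argument against the density $Z$ to obtain the local $(\GG,\Q)$-martingale property (the paper merely packages your two separate computations into one via the auxiliary process $X^\epsilon=\epsilon B^\theta+M^\theta(\psi)$, $\epsilon\in[0,1]$), invariance of the quadratic variation plus the Kallenberg characterization for the time-changed Brownian motion claim, a Cauchy--Schwarz bound using $\E[Z_T^2]<\infty$ to upgrade $M^\theta(\psi)$ to a true $\Q$-martingale, a semi-ring extension for the random-field statement, and the vanishing covariation together with \eqref{eq:uniintcondeq} and Protter's theorem for strong orthogonality. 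Two cosmetic remarks: your identity for $d[M^\theta(\psi),Z]_t$ should involve integration against $H$ rather than its compensator (the two agree modulo a local martingale, which is all the cancellation argument needs), and where you assert that $[B^\theta,M^\theta(\psi)]$ vanishes pathwise, the paper instead deduces $\langle B^\theta,M^\theta(\psi)\rangle=0$ from the conditional independence \ref{list:A5} and then uses \eqref{eq:uniintcondeq} to upgrade the resulting local $\Q$-martingale to a uniformly integrable one.
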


We recall that two $\GG$-martingales with values in $L^2(\Q)$ are strongly orthogonal if their product is a uniformly integrable $(\GG,\Q)$-martingale or, equivalently, if their quadratic variation process is a uniformly integrable $(\GG,\Q)$-martingale.

\bigskip
\begin{proof}
With $\epsilon\in[0,1]$, define
\[
X^{\epsilon}_t:=\epsilon B^{\theta}_t+M^\theta_t (\psi).
\]
We have
\begin{align*}
dX^{\epsilon}_t
&=-\alpha^{\epsilon}_tdt+\epsilon dB_t+\int_{\Rr}\psi_t(z)\tilde{H}(dt,dz),
\end{align*}
where
\[
\alpha^{\epsilon}_t=\epsilon \theta^B_t\lambda^B_t +\int_{\Rr}\psi_t(z)\theta^H_t(z)\nu(dz)\lambda^H_t.
\]
From Lemma 1.27 in \cite{OS07}, we know that if $Z_tX^{\epsilon}_t$ is a local $(\GG,\Prob)-$martingale, then $X^{\epsilon}_t$ is a local $(\GG,\Q)-$martingale. From Definition 1.28 and Example 1.29 in \cite{OS07}, and recalling that 
\[
dZ_t=Z_{t^-}\bigg{(}\theta^B_tdB_t+\int_{\Rr}\theta^H_t(z)\tilde{H}(dt,dz)\bigg{)},
\]
we get the following:
\begin{align*}
d(Z_tX^{\epsilon}_t)&=Z_{t^-}dX^{\epsilon}_t+X^{\epsilon}_{t^-}dZ_t+dZ_tdX^{\epsilon}_t\\
&=Z_{t^-}\bigg(\epsilon+X^{\epsilon}_{t^-}\theta^B_t\bigg)dB_t\\
&\quad+Z_{t^-}\int_{\Rr}\bigg(X^{\epsilon}_{t^-}\theta^H_t(z)+\psi_t(z)+\psi_t(z)\theta^H_t(z)\bigg)\tilde{H}(dt,dz).
\end{align*}
Thus, $X^{\epsilon}_t$ is a local $(\GG,\Q)-$martingale for all $\epsilon\in[0,1]$. In particular, $X^0_t=M^\theta_t(\psi)$ is a local $(\GG,\Q)-$martingale. Moreover, $B^{\theta}_t=X^1_t-M^\theta_t(\psi)$ is also a local $(\GG,\Q)-$martingale.

Since $B^{\theta}$ is a continuous local martingale ($B^\theta_0=0$), with quadratic variation $[B^{\theta},B^{\theta}]_t=[B,B]_t=\Lambda^B_t$ (the quadratic variation is invariant under equivalent measure change), then $B^{\theta}$ is a time-changed $(\GG,\Q)$-Brownian motion, see Theorem 16.4 in \cite{Kallenberg1997}. Hence, it is also a doubly stochastic Gaussian measure as in Definition \ref{definition:listA}.

As for $M^\theta (\psi)$, we can see that its quadratic variation is
\begin{align*}
[M^\theta(\psi),M^\theta(\psi)]_t=\int_0^t\int_{\Rr}\psi^2_s(z) H(ds,dz).\\
\end{align*}
Now, let $\psi_t(z)={1}_{(0,t]\times B}(t,z)$, for $t\in[0,T]$ and $B\in\mathcal{B}_{\Rr}$. Then,
\begin{align*}
\E_{\Q}\bigg[[M^\theta(\psi),M^\theta(\psi)]_T\bigg]&=\E_{\Q}\bigg[\int_0^T\int_{\Rr}{1}_{(0,t]\times B}(t,z) H(dt,dz)\bigg]\\
&=\E\bigg[Z_T\int_0^T\int_{\Rr}{1}_{(0,t]\times B}(t,z) H(dt,dz)\bigg].
\end{align*}
By H\"older's inequality, we have that
\begin{equation}
\E_{\Q}\bigg[[M^\theta(\psi),M^\theta(\psi)]_T\bigg]
\leq \bigg(\E[Z_T^2]\bigg)^{1/2}
\cdot\bigg(\E \bigg[H((0,t]\times B) ^2 \bigg]\bigg)^{1/2}
<\infty.\label{eq:notinfi}
\end{equation}
Hence $M^\theta(\psi)$ is a $(\GG,\Q)$- martingale, see e.g. Corollary to Theorem 27.II in \cite{Protter2005}.
Denote $\mathcal{B}$ a semi-ring generating $\mathcal{B}_{\Rr}$. We can regard the $\sigma-$algebra $\mathcal{B}_{(0,T]}$ as generated by the semi-ring of intervals of the form $(s,t]$, where $0\leq s< t\leq T.$ The $\sigma-$algebra $\mathcal{B}_{(0,T]\times\Rr}$ is generated by the semi-ring of sets $(s,t]\times A$, where $A\in\mathcal{B}.$
For an element $(s,t]\times A$ in the semi-ring, let $\psi={1}_{(0,t]\times A}$. Then we have
\begin{equation}
\tilde{H}^{\theta}((s,t]\times A) = M^\theta_t(\psi)-M^\theta_s(\psi).\label{eq:ring}
\end{equation}
By (\ref{eq:notinfi}) and (\ref{eq:ring}) $\tilde{H}^{\theta}$ is $\sigma-$finite on the semi-ring $\Prob-$a.s. (equivalently $\Q-$a.s.), hence we can uniquely extend (\ref{eq:ring}) to the $\sigma-$algebra $\mathcal{B}_{[0,T]\times\Rr}$, see Theorem 11.3 and Theorem 10.3 in \cite{PB95}. 
Hence, $\tilde{H}^{\theta}$ has the $(\GG,\Q)$-martingale property, conditionally orthogonal values with respect to $(\GG,\Q)$, and its variance measure is $\sigma-$finite. $\tilde{H}^{\theta}$ is clearly $\GG-$adapted by its definition, and $\tilde{H}^{\theta}$ is additive and $\sigma-$additive in $L^2(\Q)$ by its integral form and the condition on $\theta^H$. In conclusion, $\tilde{H}^{\theta}$ is a $(\GG,\Q)$-martingale random field with conditionally orthogonal values.

Finally, we show that $B^{\theta}$ and $M^\theta(\psi)$ are strongly orthogonal under $\Q$, for $\psi={1}_{\Delta}$ with $\Delta \in \mathcal{B}_{[0,T]\times \Rr}$: $m(\Delta)=\E[\Lambda^H(\Delta)]<\infty$. 
In fact, observe that  
\begin{align*}
B^{\theta}_t&:=B_t-\bigg{\langle} B,\int_0^{\cdot}\theta^B_sdB_s\bigg{\rangle}_t,\\
M^\theta_t(\psi) = \int_0^t\int_{\Rr} {1}_{\Delta}(s,z)  \tilde{H}^{\theta}(dt,dz)
&:=\int_0^t\int_{\Rr} {1}_{\Delta}(s,z)\tilde{H}(dt,dz)\\
&\quad-\bigg{\langle} \int_0^{\cdot}\int_{\Rr} {1}_{\Delta}(s,z) \tilde{H}(ds,dz),\int_0^{\cdot}\int_{\Rr}\theta^H_s(z)\tilde{H}(ds,dz)\bigg{\rangle}_t,
\end{align*}
Then 
\[
\bigg{\langle} B^{\theta}, M^\theta(\psi)\bigg{\rangle}_t
=\bigg{\langle}B, \int_0^{\cdot}\int_{\Rr} {1}_{\Delta}(s,z) \tilde{H}(dt,dz)\bigg{\rangle}_t=0,
\]
as a consequence of \ref{list:A5} in Definition \ref{definition:listA}.
From this we know that $[B^{\theta}, M^\theta(\psi)]_t$ is a $\Q$-local martingale and, by (\ref{eq:uniintcondeq}) and   Theorem 51.I in \cite{Protter2005}, we get that $[B^{\theta}, M^\theta(\psi)]$ is a uniformly integrable $\Q$-martingale. Then $B^{\theta}_t$ and $M^\theta_t(\psi)$ are strongly orthogonal square integrable martingales.
\end{proof}

\begin{remark}
If \eqref{eq:radoniko} is defined with $\theta\in \II_\FF$, then $Z^\theta$ is an $\FF$-adapted process. In this case, the fields $B^\theta$ and $\tilde H^\theta$ would be strongly orthogonal $(\FF,\Q)$-martingale random fields in the sense discussed above.
\end{remark}

Note that $\tilde{H}^{\theta}$ is not a doubly stochastic Poisson random field under $\Q$, in general.

\begin{corollary} 
 \label{cor:mycor}
Let $\tilde{H}^{\theta}$ and $Z$ be defined as in Theorem \ref{thm:girsanov}. If the stochastic field $\theta^H$ is deterministic, then $\tilde{H}^{\theta}$ is a $(\GG, \Q)-$centered doubly stochastic Poisson random field. Moreover, if $\theta^H_t(z)=\theta^H_t$, then the new jump measure and the new time distortion process are given by
\[
\nu^{\theta}(dz)=\nu(dz),\quad\quad \lambda^{\theta}_t(\omega)=\{1+\theta^H_t\}\lambda_t(\omega).
\]
If $\theta^H_t(z)=\theta^H(z)$, then the new jump measure and the new time distortion process are given by
\[
\nu^{\theta}(dz)=\{1+\theta^H(z)\}\nu(dz),\quad\quad \lambda^{\theta}_t(\omega)=\lambda_t(\omega).
\]
\end{corollary}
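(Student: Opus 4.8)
The plan is to pin down the conditional law of $H$ under $\Q$ given $\Ff^\Lambda=\Gg_0$ via its characteristic functional, and then read off the two claimed factorizations. By Theorem~\ref{thm:girsanov} we already know that $\tilde H^{\theta}$ is a $(\GG,\Q)$-martingale random field with conditionally orthogonal values, so it is automatically centered; writing $\tilde H^{\theta}(dt,dz)=H(dt,dz)-\Lambda^{H,\theta}(dt,dz)$ with $\Lambda^{H,\theta}(dt,dz):=(1+\theta^H_t(z))\,\nu(dz)\lambda^H_t\,dt$, the one remaining task is to show that, conditionally on $\Ff^\Lambda$ and under $\Q$, $H$ is a Poisson random measure with intensity $\Lambda^{H,\theta}$. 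The hypothesis that $\theta^H$ is deterministic will be used exactly to make $\Lambda^{H,\theta}$ an $\Ff^\Lambda$-measurable (``frozen'') measure.

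First I would fix finitely many pairwise disjoint sets $\Delta_1,\dots,\Delta_n\subseteq[0,T]\times\Rr$ with $m(\Delta_j)<\infty$, reals $c_1,\dots,c_n$, put $\psi_t(z):=\sum_j c_j\ind_{\Delta_j}(t,z)$ and $U_t:=\exp\big(i\int_0^t\int_\Rr\psi_s(z)\,H(ds,dz)\big)$, so that $dU_t=U_{t^-}\int_\Rr(e^{i\psi_t(z)}-1)\,H(dt,dz)$. Applying the product rule to $Z_tU_t$ using the dynamics \eqref{eq:radoniko} of $Z$, the continuity of the $B$-part of $Z$ (so that the only common jumps of $Z$ and $U$ come from $H$), and replacing the resulting $H(dt,dz)$-integral by its $(\GG,\Prob)$-compensated version plus the compensator $\nu(dz)\lambda^H_t\,dt$, a short computation shows that the $dt$-drift of $Z_tU_t$ equals $Z_{t^-}U_{t^-}\big(\int_\Rr(e^{i\psi_t(z)}-1)(1+\theta^H_t(z))\,\nu(dz)\lambda^H_t\big)\,dt$. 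Note that $\theta^B$ enters only through a stochastic-integral term and never through the drift, so no hypothesis on $\theta^B$ is needed. Since $\theta^H$ is deterministic, the continuous process $R_t:=\exp\big(-\int_0^t\int_\Rr(e^{i\psi_s(z)}-1)\,\Lambda^{H,\theta}(ds,dz)\big)$ is $\Gg_0$-measurable, and by construction $d(Z_tU_tR_t)$ has zero drift, i.e.\ $Z_tU_tR_t$ is a $(\GG,\Prob)$-local martingale with value $1$ at time $0$.

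The main obstacle is the integrability step: upgrading $Z_tU_tR_t$ from a local to a genuine (uniformly integrable) martingale, so that $\E[Z_TU_TR_T\mid\Gg_0]=1$. This is where the restriction $m(\Delta_j)<\infty$ (as in the strong-orthogonality part of Theorem~\ref{thm:girsanov}) is used: together with $\theta^H>-1$ and mild integrability of $\theta^H$ it gives $\Lambda^{H,\theta}(\Delta_j)<\infty$, hence $\sup_{t}|R_t|=|R_T|\le\exp\big(2\sum_j\Lambda^{H,\theta}(\Delta_j)\big)<\infty$; combined with $|U_t|\equiv1$ and $\E[\sup_t Z_t^2]\le4\,\E[Z_T^2]<\infty$ (Doob), the localised processes are dominated in $L^1$, which yields the martingale property after a standard localisation/dominated-convergence argument (treating real and imaginary parts separately). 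Granting this, and using that $Z$ is a true $(\GG,\Prob)$-martingale with $\Gg_0=\Ff^\Lambda$ (so $\E[Z_T\mid\Ff^\Lambda]=1$ and hence $\E_\Q[\,\cdot\mid\Ff^\Lambda]=\E[Z_T\,\cdot\mid\Ff^\Lambda]$), and that $R_T$ is $\Ff^\Lambda$-measurable, I would conclude
\[
\E_\Q\Big[\exp\big(i{\textstyle\sum_{j}}c_jH(\Delta_j)\big)\,\Big|\,\Ff^\Lambda\Big]=R_T^{-1}=\prod_{j=1}^{n}\exp\big((e^{ic_j}-1)\,\Lambda^{H,\theta}(\Delta_j)\big).
\]
This is precisely the statement that, conditionally on $\Ff^\Lambda$, the random variables $H(\Delta_1),\dots,H(\Delta_n)$ are $\Q$-independent with $H(\Delta_j)$ Poisson of parameter $\Lambda^{H,\theta}(\Delta_j)$, i.e.\ $\tilde H^{\theta}=H-\Lambda^{H,\theta}$ is a $(\GG,\Q)$-centered doubly stochastic Poisson random field with $\Ff^\Lambda$-measurable intensity measure $\Lambda^{H,\theta}$.

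Finally, the two displayed formulas are just the factorizations of $\Lambda^{H,\theta}(dt,dz)=(1+\theta^H_t(z))\,\nu(dz)\lambda^H_t\,dt$: when $\theta^H_t(z)=\theta^H_t$ it equals $\nu(dz)\,\big((1+\theta^H_t)\lambda^H_t\big)\,dt$, so $\nu^{\theta}=\nu$ and $\lambda^{\theta}_t=(1+\theta^H_t)\lambda_t$; when $\theta^H_t(z)=\theta^H(z)$ it equals $\big((1+\theta^H(z))\nu(dz)\big)\,\lambda^H_t\,dt$, so $\nu^{\theta}(dz)=(1+\theta^H(z))\nu(dz)$ and $\lambda^{\theta}_t=\lambda_t$.
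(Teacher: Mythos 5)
Your proposal is correct and follows exactly the route the paper indicates -- its proof of Corollary \ref{cor:mycor} is a one-line reference to computing the characteristic function of $\tilde H^{\theta}$ under $\Q$, which you carry out in full detail via the local martingale $Z_tU_tR_t$ and then read off the two factorizations of $(1+\theta^H_t(z))\nu(dz)\lambda^H_t\,dt$. The only loose point is the claim of unconditional $L^1$ domination for the localised processes (since $\E\big[|R_T|\sup_t Z_t\big]$ need not be finite without exponential moments of $\Lambda^{H,\theta}(\Delta_j)$), but as $R_T$ is $\Gg_0$-measurable and a.s.\ finite the localisation closes by conditional dominated convergence given $\Gg_0=\Ff^\Lambda$, which is all your argument needs.
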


\begin{proof}
This can be shown by studying the characteristic function (under $\Q$) of $\tilde{H}^{\theta}(\Delta)$ for $\Delta\in\mathcal{B}_{[0,T]\times\Rr}$. 
\end{proof}


\section{Hedging under worst case scenario}

In this section we define explicitly the set of scenarios $\Qq_\MM$ considered in the definition of the risk measure in \eqref{risk-measure}:
\begin{equation*}
\label{rm}
\rho_t (\xi) := \esssup_{\Q \in \Qq_\MM} \: \E_\Q \big[ - \xi \vert \Mm_t \big], \quad t\in [0,T].
\end{equation*}
We consider the cases of information flow given by $\MM = \GG, \FF$. 
Considering the agent's perspective, it is natural to choose the filtration $\FF$ as model for the information flow. In fact $\GG$ carries the information of the whole process of stochastic time-change, which would be a form of anticipating information embedded in the information flow not reasonably available to an agent.
We study the optimisation problem \eqref{Y3} in both cases on the market \eqref{bond}-\eqref{stock} with the $\FF$- adapted coefficients $r, \alpha, \sigma, \gamma$.

\begin{definition}\label{def:admpro} 
Let the process $Z_t$, $t\in [0,T]$, be a $(\MM,\Prob)$-martingale defined by
\begin{align*}
&dZ_t
=Z_{t^-}\Big(\theta^B_tdB_t+\int_{\Rr}\theta^H_t(z)\tilde{H}(dt,dz)\Big),\\
&Z_0=1,
\end{align*}
for $\theta\in\mathcal{I}_{{\GG}}$ (with the notation $\theta_t(0) = \theta_t^B$ and $\theta_t(z) = \theta_t^H(z)$ for $z\in \R_0$) and $\theta^H_t(z)>-1$ $\Prob\times\Lambda-a.e.$
Consider the cases such that $Z_T\in L^2(\Prob)$ and (\ref{eq:uniintcondeq}) is satisfied. Moreover, for $K>0$,
\begin{align}
|\theta^B_t\lambda^B_t|<K,\quad 0\leq \theta^H_t(z)\sqrt{\lambda^H_t}< K\cdot z,\;z\in\Rr,\quad\quad\Prob\times dt-a.e.\label{eq:bound4}
\end{align}
Then the set of admissible scenarios is given by:
\begin{align*}
\mathcal{Q}_{\MM}:=\{\Q\sim \Prob \;|\;&\frac{d\Q}{d\Prob}=Z^{\theta}_T,\;
\theta \in \II_\MM\},
\end{align*}
where the Radon-Nikodym derivative are of the type above.
\end{definition}

We remark that $\Qq_\MM$ is a convex set. Moreover, $\Qq_\FF \subseteq \Qq_\GG$.

\bigskip
We recall that the hedging problem considered \eqref{Y}, with the chosen risk measure \eqref{risk-measure}, translates to the problem \eqref{Y3}:
\begin{align}
\label{Y3,2}
Y_t= \essinf_{\pi\in\Pi_\MM}\:  \esssup_{\Q\in\Qq_\MM} Y^{\pi, \Q}_t
\end{align}
with
\begin{align*}
Y^{\pi, \Q}_t :=& 
 {E}_{{\Q}}\bigg{[}
e^{-\int_t^Tr_sds}F-\int_t^Te^{-\int_t^sr_udu}\pi_s(\alpha_s-r_s)ds\nonumber\\
&\;-\int_t^Te^{-\int_t^sr_udu}\pi_s\sigma_sdB_s\nonumber\\
&\;-\int_t^T\int_{\Rr}e^{-\int_t^sr_udu}\pi_s\gamma_s(z)\tilde{H}(ds,dz)\bigg{|}\mathcal{M}_t\bigg{]} .
\end{align*}
Hence, a solution to \eqref{Y3} is given by $(\hat \pi, \hat\Q)\in \Pi_\MM \times \Qq_\MM$ such that
\begin{align}
\label{Ysolution,2}
Y_t = Y^{\hat\pi, \hat\Q}_t =  {E}_{\hat\Q}\bigg{[}
&e^{-\int_t^Tr_sds}F-\int_t^Te^{-\int_t^sr_udu}\hat\pi_s(\alpha_s-r_s)ds\nonumber\\
&-\int_t^Te^{-\int_t^sr_udu}\hat \pi_s\sigma_sdB_s\nonumber\\
&-\int_t^T\int_{\Rr}e^{-\int_t^sr_udu} \hat \pi_s\gamma_s(z)\tilde{H}(ds,dz)\bigg{|}\mathcal{M}_t\bigg{]}, \quad t\in [0,T].
\end{align}

\bigskip
Analogously, we define the set of admissible portfolio $\Pi_\MM$ with respect to the filtrations $\MM=\FF, \GG$. 
\begin{definition}
\label{def:adm} 
The portfolio $\pi:[0,T]\times\Omega\rightarrow{\R}$ is admissible if
\begin{enumerate}[(i)]
\item 
$\pi_t\gamma_t(z)>-1\quad$  $\Prob$-a.s.,
\item $\pi$ is $\MM$-predictable such that there exists a unique strong c\`adl\`ag $\MM$-adapted solution $V^{\pi}$ to the dynamics \eqref{value} on $[0,T]$,
\item
for all $\Q\in \Qq_\MM$ 
\begin{align*}
\E_{\Q}\bigg{[}\int_0^T \Big[|\alpha_t-r_t||\pi_t|+|\pi_t\sigma_t|^2 \lambda^B_t+\int_{\Rr}|\pi_t\gamma_t(z)|^2\nu(dz)\lambda^H_t \Big] dt\bigg{]}<\infty.
 \end{align*}
 \end{enumerate}
\end{definition}
Note that $\Pi_\MM$ is a convex set.

The solution to the problem \eqref{Y3,2} is studied via BSDEs and the comparison theorem. The two filtrations lead to different types of BSDEs.

\bigskip
\subsection{Flow of information $\GG$}

First of all we consider the filtration $\GG$ and the corresponding stochastic process:
\begin{align*}
&Y^{\pi,\theta}_t=\E_{\Q}\bigg{[}e^{-\int_t^Tr_sds}F-\int_t^Te^{-\int_t^sr_udu}\pi_s(\alpha_s-r_s)ds\nonumber\\
&-\int_t^Te^{-\int_t^sr_udu}\pi_s\sigma_sdB_s-\int_t^T\int_{\Rr}e^{-\int_t^sr_udu}\pi_s\gamma_s(z)\tilde{H}(ds,dz)\bigg{|}\mathcal{G}_t\bigg{]},
\label{eq:YQ}
\end{align*}
where $\pi\in\Pi_{\GG}$ and $\Q = \Q^\theta \in\mathcal{Q}_{\GG}$. 
By Theorem \ref{thm:girsanov}, define the $\GG$-martingale random fields: 
\[
dB^{\theta}_t:=dB_t-\theta^B_t\lambda^B_tdt,
\]
and
\[
\tilde{H}^{\theta}(dt,dz):=\tilde{H}(dt,dz)-\theta^H_t(z)\nu(dz)\lambda^H_t dt.
\]
So when $\MM=\GG,\,Y^{\pi,\Q}$ takes the form:
\begin{align}
Y^{\pi,\theta}_t
&=\E_{\Q}\bigg{[}e^{-\int_t^Tr_sds}F-\int_t^Te^{-\int_t^sr_udu}\pi_s\bigg{(}(\alpha_s-r_s)+\sigma_s\theta^B_s \lambda^B_s \nonumber\\
&\quad+\int_{\Rr}\gamma_s(z)\theta^H_s(z)\nu(dz)\lambda^H_s\bigg{)}ds-\int_t^Te^{-\int_t^sr_udu}\pi_s\sigma_sdB^{\theta}_s\nonumber\\
&\quad-\int_t^T\int_{\Rr}e^{-\int_t^sr_udu}\pi_s\gamma_s(z)\tilde{H}^{\theta}(ds,dz)\bigg{|}\mathcal{G}_t\bigg{]}\nonumber\\
&=\E_{\Q}\bigg{[}e^{-\int_t^Tr_sds}F-\int_t^Te^{-\int_t^sr_udu}\pi_s\bigg{(}(\alpha_s-r_s)+\sigma_s\theta^B_s\lambda^B_s \nonumber\\
&\quad+\int_{\Rr}\gamma_s(z)\theta^H_s(z)\nu(dz)\lambda^H_s\bigg{)}ds\bigg{|}\mathcal{G}_t\bigg{]}.\label{eq:exY}
\end{align}
Hence, 
\begin{align}
e^{-\int_0^tr_udu}Y^{\pi,\theta}_t 
= &\: \E_{\Q}\bigg{[}e^{-\int_0^Tr_sds}F-\int_0^Te^{-\int_0^sr_udu}\pi_s\nonumber\\
&\quad\cdot\bigg{(}(\alpha_s-r_s)+\sigma_s\theta^B_s\lambda^B_s+\int_{\Rr}\gamma_s(z)\theta^H_s(z)\nu(dz)\lambda^H_s\bigg{)}ds\bigg{|}\mathcal{G}_t\bigg{]} \label{eq:martin}\\
&\quad+\int_0^te^{-\int_0^sr_udu}\pi_s\bigg{(}(\alpha_s-r_s)+\sigma_s\theta^B_s\lambda^B_s+\int_{\Rr}\gamma_s(z)\theta^H_s(z)\nu(dz)\lambda^H_s\bigg{)}ds.
\nonumber
\end{align}
The martingale representation Theorem \ref{Teorem:G_martingales} applied to 
\begin{align}\label{eq:rename}
\xi&:=e^{-\int_0^Tr_sds}F-\int_0^Te^{-\int_0^sr_udu}\pi_s
\bigg{(}(\alpha_s - r_s)+\sigma_s\theta^B_s\lambda^B_s+\int_{\Rr} \gamma_s(z)\theta^H_s(z)\nu(dz)\lambda^H_s\bigg{)}ds
\end{align}
gives us the existence of the $\GG$-predictable integrands $Z^{\pi,\theta}$ and $U^{\pi,\theta}$ for the two corresponding stochastic integrals, so we have 
\begin{align}
e^{-\int_0^tr_udu}Y^{\pi,\theta}_t
&=\E_{\Q}[\xi|\mathcal{F}^{\Lambda}_T]+\int_0^tZ^{\pi,\theta}_sdB^{\theta}_s+\int_0^t\int_{\Rr}U^{\pi,\theta}_s(z)\tilde{H}^{\theta}(ds,dz)\nonumber\\
&+\int_0^te^{-\int_0^sr_udu}\pi_s\bigg{(}(\alpha_s-r_s)+\sigma_s\theta^B_s \lambda_s^B 
+\int_{\Rr}\gamma_s(z)\theta^H_s(z)\nu(dz)\lambda^H_s\bigg{)}ds\nonumber .
\end{align}
The It\^o formula allows to obtain the linear BSDE
\begin{align}
dY^{\pi,\theta}_t
=&\;\bigg{(}r_tY^{\pi,\theta}_t+\pi_t\bigg(\alpha_t-r_t\bigg)+\bigg(\pi_t\sigma_t-e^{\int_0^tr_udu}Z^{\pi,\theta}_t\bigg)\theta^B_t\lambda^B_t\nonumber\\
&+\int_{\Rr}\bigg(\pi_t\gamma_t(z)-e^{\int_0^tr_udu}U^{\pi,\theta}_t(z)\bigg)\theta^H_t(z)\nu(dz)\lambda^H_t\bigg{)}dt\nonumber\\
&+e^{\int_0^tr_udu}Z^{\pi,\theta}_tdB_t+\int_{\Rr}e^{\int_0^tr_udu}U^{\pi,\theta}_t(z)\tilde{H}(dt,dz),\label{eq:mainbsde}\\
Y^{\pi,\theta}_T=&\;F,\nonumber
\end{align}
the solution of which is guaranteed by Theorem \ref{linearGG} thanks to \eqref{eq:bound4}.
The generator of this BSDE is:
\begin{align}
g_\cdot(\lambda,y,z,u(\cdot),\pi,\theta)=&-yr-(\mu-r)\pi-(\pi\sigma-e^{\int_0^{\cdot}r_sds}z)\theta^B \lambda^B\nonumber\\
&-\int_{\Rr}(\pi\gamma_\cdot(x)-e^{\int_0^{\cdot}r_sds}u(x))\theta^H(x)\nu(dx)\lambda^H.\label{eq:driver}
\end{align}

The min-max type of problem corresponding to \eqref{Y3,2} arises in stochastic differential games. With the comparison Theorem \ref{thm:comparison} in hands, we can justify the proof of the following result due to \cite{OS11} in our setting for the driving noises considered in this paper. 

As short hand notation, denote $g(\pi_t,\theta_t)=g_\cdot(\lambda,y,z,u(\cdot),\pi_t,\theta_t)$. 
The solution of a BSDE with standard parameters $(\xi,g)$ is denote by  $({Y}, {Z},{U})$, for an optimal $\hat{\theta}$  the solution is denoted by $(Y^{\pi},Z^{\pi},U^{\pi})$, and for an optimal $\hat\pi$ the solution is then $(Y^{\theta},Z^{\theta},U^{\theta})$. 
The solution given in the case $g(\hat{\pi}_t,\hat{\theta}_t)$ is denoted $(\hat{Y},\hat{Z},\hat{U})$.

\begin{theorem}
\label{thm:main}
Let $(\xi,g)$ be standard parameters.
Suppose that for all $(\omega,t,\lambda,y,z,u)$ there exist $\hat{\pi}_t=\hat{\pi}(\omega,t,\lambda,y,z,u)$ and $\hat{\theta}_t=\hat{\theta}(\omega,t,\lambda,y,z,u)$ such that for all admissible portfolios $\pi\in \Pi_\GG$ and all admissible probability measures $\Q=\Q^\theta \in \Qq_\GG$, we have:
\begin{align}
g(\hat{\pi}_t,\theta_t)&\leq g(\pi_t,\theta_t)\leq g(\pi_t,\hat{\theta}_t),\label{eq:compdrivers}
\end{align}
for a.a. $(\omega,t)$. Assume that the conditions of Theorem \ref{thm:comparison} hold. 
Suppose $\hat{\pi}$ and $\hat{\theta}$ are admissible, and suppose that for all admissible $(\pi,\theta)$ there exists a unique solution to the BSDE with $(\xi,g(\pi_t,\theta_t))$ as terminal condition and generator, respectively. Then
\begin{align*}
\hat{Y}_t = Y^{\hat\pi}_t 
=\essinf_{\pi\in\Pi_\GG}Y^{\pi}_t=: Y_t=\esssup_{\Q\in\mathcal{Q}_\GG}\bigg{\{}\essinf_{\pi\in\Pi_\GG}Y^{\pi,\theta}_t\bigg{\}}=\esssup_{\Q \in\mathcal{Q}_\GG}Y^{\theta}_t.
\end{align*}
\end{theorem}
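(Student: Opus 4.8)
The plan is to use the comparison theorem (Theorem~\ref{thm:comparison}) as the engine, exactly as in the classical argument of \cite{OS11} for stochastic differential games. The key observation is that by construction $Y^{\pi,\theta}$ is the unique solution of the linear BSDE \eqref{eq:mainbsde} with generator $g(\pi_t,\theta_t)$ and terminal condition $\xi = F$; the three families of solutions $(Y^\pi,Z^\pi,U^\pi)$ (with $\theta=\hat\theta$ frozen), $(Y^\theta,Z^\theta,U^\theta)$ (with $\pi=\hat\pi$ frozen), and $(\hat Y,\hat Z,\hat U)$ (both frozen at their optimal values) all share the same terminal condition $F$, so the comparison can be applied pairwise once we check the ordering of the generators along the relevant solution processes.

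First I would fix an arbitrary admissible $\pi\in\Pi_\GG$ and compare the BSDE with generator $g(\pi_t,\hat\theta_t)$ against the one with generator $g(\hat\pi_t,\hat\theta_t)$. Since the right inequality in \eqref{eq:compdrivers} gives, evaluated along the solution $(\hat Y,\hat Z,\hat U)$,
\begin{equation*}
g(\hat\pi_t,\hat\theta_t)\bigl(\hat Y_t,\hat Z_t,\hat U_t\bigr)\leq g(\pi_t,\hat\theta_t)\bigl(\hat Y_t,\hat Z_t,\hat U_t\bigr)\quad dt\times d\Prob\text{-a.e.},
\end{equation*}
and both equations have the same terminal value $F$, Theorem~\ref{thm:comparison} yields $\hat Y_t\leq Y^\pi_t$ for a.a.\ $(\omega,t)$ (here one uses that the generator $g(\pi_t,\hat\theta_t)$ has the linear structure required by the hypothesis of Theorem~\ref{thm:comparison}, with $\kappa=\theta$ of the right form thanks to \eqref{eq:bound4}, and that $g(\hat\pi_t,\hat\theta_t)$ is a standard-parameter generator). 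Taking the essential infimum over $\pi\in\Pi_\GG$ gives $\hat Y_t\leq \essinf_\pi Y^\pi_t$. For the reverse, note $\hat\pi$ is itself admissible, so $\essinf_\pi Y^\pi_t\leq Y^{\hat\pi}_t=\hat Y_t$; hence $\hat Y_t=Y^{\hat\pi}_t=\essinf_{\pi}Y^\pi_t=:Y_t$. The same argument with the roles of $\pi$ and $\theta$ exchanged, using the left inequality $g(\hat\pi_t,\theta_t)\leq g(\pi_t,\theta_t)$ evaluated along the appropriate solution, gives $\hat Y_t=Y^{\hat\theta}_t=\esssup_{\Q}Y^\theta_t$.

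It remains to intertwine the two: I would show $\essinf_\pi\bigl(\esssup_\Q Y^{\pi,\theta}_t\bigr)$ and $\esssup_\Q\bigl(\essinf_\pi Y^{\pi,\theta}_t\bigr)$ both coincide with $\hat Y_t$. One inequality ($\esssup\,\essinf\leq\essinf\,\esssup$) is the trivial minimax inequality. For the nontrivial direction, for fixed $\theta$ one runs the comparison argument in the $\pi$-variable to get $Y^{\hat\pi,\theta}_t\leq Y^{\pi,\theta}_t$, and for fixed $\pi$ one runs it in the $\theta$-variable; combining with the saddle property \eqref{eq:compdrivers} of the \emph{pointwise} optimizers $(\hat\pi,\hat\theta)$ pins everything at $\hat Y_t$. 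The main obstacle will be the measurability and admissibility bookkeeping: one must verify that the pointwise-defined $\hat\pi(\omega,t,\lambda,y,z,u)$ and $\hat\theta(\omega,t,\lambda,y,z,u)$, when composed with the solution processes $(\hat Y,\hat Z,\hat U)$, produce genuinely $\GG$-predictable, admissible controls (so that $\hat\pi\in\Pi_\GG$, $\hat\Q\in\Qq_\GG$), and that each intermediate BSDE invoked indeed has standard parameters and a unique solution --- these are precisely the hypotheses we are told to assume, so the ``hard part'' is organizing the chain of comparisons so that at each step the generator being dominated is evaluated along the solution of the dominating (or dominated) equation, matching the exact asymmetric form required by Theorem~\ref{thm:comparison}.
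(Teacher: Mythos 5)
Your proposal is correct and follows essentially the same route as the paper's proof (which is the argument of \cite{OS11}): pairwise applications of the comparison theorem driven by the saddle-point inequality \eqref{eq:compdrivers}, evaluated along the solution of the smaller equation, followed by $\essinf$/$\esssup$ bookkeeping and the minimax inequality $\sup\inf\leq\inf\sup$. The only difference is cosmetic — you first pin down $\hat Y_t=\essinf_\pi Y^{\pi}_t$ and $\hat Y_t=\esssup_\Q Y^{\theta}_t$ and then intertwine, whereas the paper derives $Y^{\theta}_t\leq Y^{\pi,\theta}_t\leq Y^{\pi}_t$ for all admissible $(\pi,\theta)$ and closes the chain in one pass.
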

$ $\\

\begin{proof}
The proof is due to \cite{OS11}. By applying the comparison theorem to the solutions of the BSDEs of the couples of standard parameters $(F,g(\hat{\pi}_t,\theta_t))$,
$(F,g(\pi_t,\theta_t)),$ $(F,g(\pi_t,\hat{\theta}_t))$, by (\ref{eq:compdrivers}) we get that $Y^{\theta}_t\leq Y^{\pi,\theta}_t\leq Y^{\pi}_t$, 
for all admissible $(\pi,\theta)$, thus:
\begin{align}
\text{For all $\theta$:}&\quad Y^{\theta}_t\leq \essinf_{\pi\in\Pi}Y^{\pi,\theta}_t\quad \Prob\times dt-a.e.\label{eq:ineq1}\\
\text{For all $\pi$:}&\quad \esssup_{\theta\in\mathcal{Q}}Y^{\pi,\theta}_t\leq Y^{\pi}_t\quad \Prob\times dt-a.e.\label{eq:ineq2}
\end{align}
From definition of essential supremum and (\ref{eq:ineq1}), we get
\[
\hat{Y}_t 
\leq\esssup_{\Q\in\mathcal{Q}_\GG}Y^{\theta}_t
 = \esssup_{\Q\in\mathcal{Q}_\GG}\bigg{(}\essinf_{\pi\in\Pi_\GG}Y^{\pi,\theta}_t\bigg{)}.
\]
From (\ref{eq:ineq2}) and definition of essential infimum, we get
\[
Y_t=\essinf_{\pi\in\Pi_\GG}\bigg{\{}\esssup_{\Q\in\mathcal{Q}_\GG}Y^{\pi,\theta}_t\bigg{\}}\leq \essinf_{\pi\in\Pi_\GG}Y^{\pi}_t\leq \hat{Y}_t.
\]
Hence, we obtain the following chain of inequalities:
\begin{align*}
Y_t=\essinf_{\pi\in\Pi_\GG}\bigg{\{}\esssup_{\Q\in\mathcal{Q}_\GG }Y^{\pi,\theta}_t\bigg{\}}&\leq \essinf_{\pi\in\Pi_\GG}Y^{\pi}_t\leq \hat{Y}_t\\ 
&\leq\esssup_{\Q\in\mathcal{Q}_\GG}Y^{\theta}_t\leq\esssup_{\Q\in\mathcal{Q}_\GG}\bigg{(}\essinf_{\pi\in\Pi_\GG}Y^{\pi,\theta}_t\bigg{)}.
\end{align*}
Since $\sup(\inf)\leq\inf(\sup)$ we get equality between all terms. 
\end{proof}

\bigskip
We shall apply this result. The generator (\ref{eq:driver}) satisfies the conditions of $g^{(2)}$ in Theorem \ref{thm:comparison}. In fact for an admissible probability measure $\Q^\theta$ and an admissible $\pi$, we have:
\begin{align*}
g_t(\lambda&,y,\zeta,u(\cdot),\pi_t,\theta_t)\nonumber\\
=&-yr_t-\pi_t\bigg{[}\alpha_t-r_t+\sigma_t\theta^B_t \lambda^B +\int_{\Rr}\gamma_t(z)\theta^H_t(z)\nu(dz)\lambda^H\bigg{]}\\
&+e^{\int_0^tr_sds}\zeta \theta^B_t \sqrt{\lambda^B}\sqrt{\lambda^B}+\int_{\Rr}e^{\int_0^tr_sds}u(z)\sqrt{\lambda^H}\theta^H_t(z)\nu(dz)\sqrt{\lambda^H}.
\end{align*}
Moreover, condition \eqref{eq:compdrivers} leads to the study of the equations
\begin{equation*}
\frac{\partial g}{\partial \theta^B}(\hat{\pi},\hat{\theta})=0,\quad
\frac{\partial g}{\partial \theta^H}(\hat{\pi},\hat{\theta})=0,\quad
\frac{\partial g}{\partial \pi}(\hat{\pi},\hat{\theta})=0.
\end{equation*}
The determinant of the Hessian is null, and these equations correspond to a critical point.
Recall that $\Qq_\GG$ and $\Pi_\GG$ are convex.
These yield to the characterising equations for the optimal solution.

\bigskip
Summarising, we have the following result. 

\begin{theorem}
\label{final}
Let the reference filtration be $\GG$.
If $(\hat\pi , \hat\Q) \in \Pi_\GG \times \Qq_\GG$ satisfy the equations:
\begin{align}
&\Big(e^{\int_0^tr_sds}\hat{Z}_t-\hat{\pi}_t\sigma_t\Big) \lambda^B_t =0,\label{eq:hihi1}\\
&\int_{\Rr}\Big(e^{\int_0^tr_sds}\hat{U}_t(z)-\hat{\pi}_t\gamma_t(z)\Big)\nu(dz)\lambda^H_t =0,\label{eq:hihi2}\\
&(\alpha_t-r_t)+\sigma_t\hat\theta^B_t\lambda^B_t+\int_{\Rr}\gamma_t(z)\hat\theta^H_t(z)\nu(dz)\lambda^H_t =0,
\label{eq:hihi3}
\end{align}
where $(\hat{Z},\hat{U})\in\mathcal{I_{\GG}}$ are the integrands in the integral representation (Theorem \ref{theorem:ito_representation}):
\begin{align}
e^{-\int_0^Tr_tdt}F=\E_{\hat{\Q}}[e^{-\int_0^Tr_tdt}F|\mathcal{F}^{\Lambda}_T]+\int_0^T\hat{Z}_sdB^{\hat\theta}_s+\int_0^T\int_{\Rr}\hat{U}_s(z)\tilde{H}^{\hat\theta}(ds,dz), \label{eq:summary1}
\end{align}
then $(\hat\pi, \hat\Q)$ is the optimal solution of the problem \eqref{Ysolution}.
The optimal price process $\hat Y_t = Y^{\hat\pi, \hat\Q}_t = Y_t$, $t \in [0,T]$, is given by:
\begin{align}
\hat Y_t=\E_{\hat\Q} \Big[ e^{-\int_t^Tr_sds}F&-\int_t^Te^{\int_0^s r_udu}\hat{Z}_sdB^{\hat\theta}_s
-\int_t^T\int_{\Rr}e^{\int_0^s r_udu}\hat{U}_s(z)\tilde{H}^{\hat\theta}(ds,dz) \vert \Gg_t \Big] \label{opt-price-G}
\end{align}
where 
\[
dB^{\hat\theta}_t:=dB_t-{\hat\theta^B}_t\lambda^B_tdt
\]
is a $(\GG, \hat\Q)$-martingale and a time-changed $(\GG,\hat\Q)$-Brownian motion, and
\[
\tilde{H}^{\hat\theta}(dt,dz):=\tilde{H}(dt,dz)-{\hat\theta^H}_t(z)\nu(dz)\lambda^H_t dt.
\]
is a $(\GG,\hat\Q)$-martingale random field orthogonal to $B^{\hat\theta}$.
Under probability measure $\Prob$, the optimal price process follows the following dynamics:
\begin{equation}
\begin{split}
d \hat Y_t =  & \Big(  \hat Y _t  r_t + \hat  \pi_t \big( \alpha_t - r_t \big) \Big) dt + \hat \pi _t \sigma_t dB_t + \int_\Rr \hat \pi_t \gamma_t(z) \tilde H(dtdz)  \\  
 \hat Y_0 =& E_{\hat\Q} [e^{-\int_0^Tr_tdt}F|\mathcal{F}^{\Lambda}_T]. 
\end{split}
\label{opt-price-dyn-G}
\end{equation}
\end{theorem}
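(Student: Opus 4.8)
The plan is to derive the whole statement from the stochastic differential game Theorem~\ref{thm:main}. As spelled out in the display preceding the statement, the generator $g$ in \eqref{eq:driver} has exactly the structure required of $g^{(2)}$ in the comparison Theorem~\ref{thm:comparison}, with $\kappa$ in the role of $\theta$ and the admissibility bound \eqref{eq:bound4} providing condition~\ref{list:finite_E_integral1}; hence the comparison theorem is available along every admissible pair $(\pi,\theta)$. It therefore suffices to produce a pointwise saddle point $(\hat\pi_t,\hat\theta_t)$ of $g$ in the sense of \eqref{eq:compdrivers}: then Theorem~\ref{thm:main} yields $\hat Y_t=Y^{\hat\pi}_t=\essinf_{\pi}\esssup_{\Q}Y^{\pi,\theta}_t=Y_t$, and since $\hat Y=Y^{\hat\pi,\hat\theta}$ is, by the very derivation that produced \eqref{eq:exY}, the right-hand side of \eqref{Ysolution} for this $\hat\Q$, this is precisely the assertion that $(\hat\pi,\hat\Q)$ solves \eqref{Ysolution}. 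Note that the theorem is a sufficiency result: one never needs to solve the system \eqref{eq:hihi1}--\eqref{eq:hihi3}, only to check that a solution, when admissible, is optimal.

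The next step is to match the saddle-point condition with \eqref{eq:hihi1}--\eqref{eq:hihi3}. The map $(\pi,\theta)\mapsto g(\pi,\theta)$ is affine in $\pi$ and affine in $(\theta^B,\theta^H)$, with only a bilinear cross term, and $\Pi_\GG$, $\Qq_\GG$ are convex; as the paper remarks, the Hessian is degenerate, so a saddle point is characterised by the vanishing of the first variations $\partial g/\partial\pi$, $\partial g/\partial\theta^B$, $\partial g/\partial\theta^H$. Evaluating these from \eqref{eq:driver} along the solution of the BSDE \eqref{eq:mainbsde} taken at $\theta=\hat\theta$ --- whose integrands against $B^{\hat\theta}$ and $\tilde{H}^{\hat\theta}$ are exactly the $(\hat Z,\hat U)$ of \eqref{eq:summary1}, because at $\hat\theta$ the terminal datum $\xi$ of \eqref{eq:rename} reduces to $e^{-\int_0^T r}F$ (its drift being killed by \eqref{eq:hihi3}) and the integral representation Theorem~\ref{theorem:ito_representation} is unique --- one finds that $\partial g/\partial\theta^B=0$ is \eqref{eq:hihi1}, $\partial g/\partial\theta^H=0$ is \eqref{eq:hihi2}, and $\partial g/\partial\pi=0$ is \eqref{eq:hihi3}. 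Thus under the hypotheses of the theorem the pair $(\hat\pi,\hat\theta)$ satisfies \eqref{eq:compdrivers}, and $(\hat\pi,\hat\Q)$ is optimal.

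For the explicit formulas I would argue directly. Inserting \eqref{eq:hihi3} into \eqref{eq:exY} annihilates the whole $ds$-integral there, leaving $\hat Y_t=Y^{\hat\pi,\hat\theta}_t=\E_{\hat\Q}[e^{-\int_t^T r_s ds}F \mid \Gg_t]$; adding inside the conditional expectation the terms $-\int_t^T e^{\int_0^s r_u du}\hat Z_s\,dB^{\hat\theta}_s$ and $-\int_t^T\int_\Rr e^{\int_0^s r_u du}\hat U_s(z)\,\tilde{H}^{\hat\theta}(ds,dz)$ changes nothing, since by Theorem~\ref{thm:girsanov} these are increments of $(\GG,\hat\Q)$-martingales and hence have null $\Gg_t$-conditional expectation --- this is \eqref{opt-price-G}. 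For the $\Prob$-dynamics \eqref{opt-price-dyn-G}, observe that $e^{-\int_0^t r}\hat Y_t=\E_{\hat\Q}[e^{-\int_0^T r}F \mid \Gg_t]$ is a $(\GG,\hat\Q)$-martingale which, by \eqref{eq:summary1} and the martingale representation Theorem~\ref{Teorem:G_martingales}, has differential $\hat Z_t\,dB^{\hat\theta}_t+\int_\Rr\hat U_t(z)\,\tilde{H}^{\hat\theta}(dt,dz)$; undiscounting and re-expressing $B^{\hat\theta},\tilde{H}^{\hat\theta}$ through their defining drifts, \eqref{eq:hihi1} lets one replace $e^{\int_0^t r}\hat Z_t\,dB_t$ by $\hat\pi_t\sigma_t\,dB_t$ (the integrands coincide on $\{\lambda^B_t\neq0\}$, and $dB$ ignores the integrand off that set), \eqref{eq:hihi2} replaces $e^{\int_0^t r}\hat U_t(z)$ by $\hat\pi_t\gamma_t(z)$ in the jump integral, and \eqref{eq:hihi3} collapses the leftover $dt$-term to $\hat\pi_t(\alpha_t-r_t)\,dt$, giving \eqref{opt-price-dyn-G} with $\hat Y_0=\E_{\hat\Q}[e^{-\int_0^T r}F \mid \Ff^\Lambda]$ since $\Gg_0=\Ff^\Lambda$.

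The hard part will be justifying that the critical point $(\hat\pi,\hat\theta)$ of the Hessian-degenerate bilinear generator really satisfies the two-sided inequality \eqref{eq:compdrivers} against \emph{all} admissible competitors, and not merely that it is a stationary point, so that Theorem~\ref{thm:main} genuinely applies; this is the content of the convexity/degeneracy remark in the paper and deserves a careful treatment. Subsidiary points that must be checked are: the admissibility of the $\hat\pi$ and $\hat\theta$ determined by \eqref{eq:hihi1}--\eqref{eq:hihi3} in the sense of Definitions~\ref{def:adm} and~\ref{def:admpro} (in particular the bounds \eqref{eq:bound4}); the $L^2(\hat\Q)$-integrability that makes the stochastic integrals in \eqref{opt-price-G} true martingale increments; and the fact that Theorems~\ref{theorem:ito_representation} and~\ref{Teorem:G_martingales} are here invoked under the shifted measure $\hat\Q$, under which $\tilde{H}^{\hat\theta}$ is no longer a doubly stochastic Poisson random field (Corollary~\ref{cor:mycor}) but only a $(\GG,\hat\Q)$-martingale random field with conditionally orthogonal values, strongly orthogonal to $B^{\hat\theta}$. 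Finally, reading off the jump term of \eqref{opt-price-dyn-G} literally requires the pointwise relation $e^{\int_0^t r}\hat U_t(z)=\hat\pi_t\gamma_t(z)$, of which \eqref{eq:hihi2} is the $\nu$-integrated form, so \eqref{eq:hihi2} there is to be read in its local (pre-integration) version coming from $\partial g/\partial\theta^H=0$.
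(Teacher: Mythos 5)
Your proposal follows essentially the same route as the paper: Theorem \ref{thm:main} (via the comparison theorem) reduces the problem to exhibiting a saddle point of the bilinear generator \eqref{eq:driver}, whose first-order conditions evaluated along the solution give \eqref{eq:hihi1}--\eqref{eq:hihi3}, and the formulas \eqref{opt-price-G}, \eqref{opt-price-dyn-G} are then obtained exactly as you do, by killing the drift in \eqref{eq:exY} with \eqref{eq:hihi3} and substituting $B^{\hat\theta}$, $\tilde H^{\hat\theta}$ back via \eqref{eq:summary1}. The caveats you flag (verification of the two-sided inequality \eqref{eq:compdrivers} at the critical point, admissibility and integrability under $\hat\Q$, and the pointwise versus $\nu$-integrated reading of \eqref{eq:hihi2}) are precisely the points the paper itself leaves implicit, so your reconstruction is faithful to its argument.
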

On the other side, if $(\hat\pi , \hat\Q) \in \Pi_\GG \times \Qq_\GG$ is an optimal solution, then the equations \eqref{eq:hihi1}-\eqref{eq:hihi3} are satisfied.

\begin{remark}
\label{G-comment1}
We observe that the optimal strategy $(\hat\pi, V^{\hat\pi}, C^{\hat\pi})$ is then given by the process $\hat\pi$ as characterised above, the value process $V^{\hat\pi}$ from \eqref{value}
has the initial value $v=V^{\hat\pi} _0 = E_{\hat\Q} [e^{-\int_0^Tr_tdt}F ]$, and the initial cost $C^{\hat\pi}_0= E_{\hat\Q} [e^{-\int_0^Tr_tdt}F|\mathcal{F}^{\Lambda}_T] - E_{\hat\Q} [e^{-\int_0^Tr_tdt}F]$ and is $C^{\hat\pi}_t =0$ for $t \in (0,T]$. 
\end{remark}

\begin{remark}
\label{G-comment2}
Observe that the optimal $\hat\Q$ is a martingale measure for the optimal price process $\hat Y$.
In fact from \eqref{opt-price-G}, we have that
\begin{align*}
e^{-\int_0^tr_sds}\hat{Y}_t=\E_{\hat{\Q}}[e^{-\int_0^Tr_sds}\hat{Y}_T|\mathcal{G}_t].\label{eq:YtGmart}
\end{align*}

\end{remark}

\bigskip
\bigskip

\subsection{Flow of information $\FF$} 
\label{sec:standard}

In the case of flow of information $\FF$, we see that the problem \eqref{Y3} leads to a different type of BSDE than the one considered so far.

Recall that 
$
\mu (dt, dz)= {1}_{\{0\}} B(dt,dz) +{1}_{\Rr}(z)\tilde{H}(dt,dz),
$ 
is an $(\FF,\Prob)$-martingale random field and that its conditional variance measure is:
\[
\langle \mu \rangle (dt,dz) =   \delta_{\{0\}}(dz) \lambda^B_s ds+  1_\Rr (z) \nu(dz)\lambda^H _sds.
\]

Moreover, by Theorem \ref{thm:girsanov} we define the $({\FF},\Q)-$martingale random field, $\mu^\theta$, by
\begin{align}
{\mu^\theta}(dt,dz)=\mu(dt,dz) - 
\beta_t(z)\langle \mu \rangle(dt,dz),
\label{eq:changemeasuregen}
\end{align}
where $\beta$ is given by:
$$
\beta_t(z) := \theta^B _t 1_{\{0\}}(z) +  \theta^H_t(z) 1_\Rr (z) .
$$

Note that $\langle \mu \rangle=\langle \mu^\theta \rangle$.

\bigskip
Let us consider the process $Y^{\pi, \Q}$ under filtration $\FF$ for $\pi\in \Pi_\FF$ and $\Q\in\Qq_\FF$:
\begin{align}
Y^{\pi,\theta}_t=\E_{\Q}\bigg{[}
&e^{-\int_t^Tr_sds}F-\int_t^Te^{-\int_t^sr_udu}\pi_s(\alpha_s-r_s)ds\nonumber\\
&-\int_t^Te^{-\int_t^sr_udu}\pi_s\sigma_sdB_s\nonumber\\
&-\int_t^T\int_{\Rr}e^{-\int_t^sr_udu}\pi_s\gamma_s(z)\tilde{H}(ds,dz)\bigg{|}\mathcal{F}_t\bigg{]}. 
\label{eq:expressbeforebsde}
\end{align}

Then, by direct computation and using Theorem \ref{thm:ito_representation2}, we obtain the following equalities:
\begin{equation*}
\begin{split}
e^{-\int_0^t r_s ds} Y^{\pi,\theta}_t
=&\E_{\Q}\bigg{[}
e^{-\int_0^T r_s ds }  F-\int_0^T\int_{{\R}} e^{-\int_0^s r_u du }  \pi_s\bigg[\bigg((\alpha_s-r_s)+\sigma_s\theta^B_s\bigg){1}_{\{0\}}(z)\\
&+\gamma_s(z)\theta^H_s(z){1}_{\Rr}(z)\bigg]\langle \mu^\theta \rangle(ds,dz)\bigg{|}\mathcal{F}_t\bigg{]}\\
&+\int_0^t\int_{{\R}}e^{-\int_0^s r_u du }    \pi_s\bigg[\bigg((\alpha_s-r_s)+\sigma_s\theta^B_s\bigg){1}_{\{0\}}(z)\\
&+\gamma_s(z)\theta^H_s(z){1}_{\Rr}(z)\bigg]\langle \mu^\theta \rangle(ds,dz)\\
=&\E_{\Q}\bigg{[}\xi_0+\int_0^T\int_{{\R}}Z_t{1}_{\{0\}}(z)+U_t(z){1}_{\Rr}(z) {\mu^\theta}(dt,dz)\bigg{|}\mathcal{F}_t\bigg{]}\\
&+\int_0^t\int_{{\R}}e^{-\int_0^s r_u du }   \pi_s\bigg[\bigg((\alpha_s-r_s)+\sigma_s\theta^B_s\bigg){1}_{\{0\}}(z)\\
&+\gamma_s(z)\theta^H_s(z){1}_{\Rr}(z)\bigg]\langle \mu^\theta\rangle(ds,dz).
\end{split}
\end{equation*}
Equivalently,
\begin{equation*}
\label{eq:gettingthere}
\begin{split}
 e^{-\int_0^t r_s ds } Y^{\pi,\theta}_t
=&\E_{\Q}[\xi_0|\mathcal{F}_t]+\int_0^t\int_{{\R}}Z_s{1}_{\{0\}}(z)+U_s(z){1}_{\Rr}(z) {\mu^\theta}(ds,dz)\\
&+\int_0^t\int_{{\R}}e^{-\int_0^s r_u du }  \pi_s\bigg[\bigg((\alpha_s-r_s)+\sigma_s\theta^B_s\bigg){1}_{\{0\}}(z)\\
&+\gamma_s(z)\theta^H_s(z){1}_{\Rr}(z)\bigg]\langle \mu^\theta\rangle(ds,dz).
\end{split}
\end{equation*}
Recall that the random variable $\xi_0 \in L^2(\Omega,\Ff, \Q)$ is orthogonal to the stochastic integrals. 
The equation above, together with \eqref{eq:changemeasuregen}, yields the BSDE:
\begin{align}
dY^{\pi,\theta}_t=&-\int_{{\R}}\bigg[\bigg{\{}-r_tY_t-\pi_t(\alpha_t-r_t)
+\theta^B_t\{e^{\int_0^tr_sds}Z_t-\pi_t\sigma_t\}\bigg{\}}{1}_{\{0\}}(z)\nonumber\\
&+\bigg{\{}e^{\int_0^tr_sds}U_t(z)-\pi_t\gamma_t(z)\bigg{\}}\theta^H_t(z){1}_{\Rr}(z)\bigg]\langle \mu \rangle(dt,dz)\nonumber\\
&+\int_{{\R}}\bigg[e^{\int_0^tr_sds}Z_t{1}_{\{0\}}(z)+e^{\int_0^tr_sds}U_t(z){1}_{\Rr}(z)\bigg] \mu (dt,dz)\label{eq:Ytwo}\\
&+e^{\int_0^tr_sds}d\E_{\Q}[\xi_0|\mathcal{F}_t]\nonumber\\
Y^{\pi,\theta}_T=&F.\nonumber
\end{align}

We remark that $\E_\Q [\xi_0\vert \Ff_t]$, $t\in [0,T]$, is an $(\FF,\Q)$-martingale orthogonal to $\mu^\theta$. Thus the process $\int_0^t e^{\int_0^s r_udu}d\E_{\Q}[\xi_0|\mathcal{F}_s]$, $t\in [0,T]$, is an $(\FF,\Q)$-martingale orthogonal to $\mu^\theta$.
By direct computation that this process is also an $(\FF, \Prob)$-martingale orthogonal to $\mu$.
In fact, for $A \in \Bb(\R)$, we have
\begin{align*}
\langle \int_0^\cdot e^{\int_0^s r_udu}d\E_{\Q}[\xi_0|\mathcal{F}_s] ,\mu(A)\rangle
&=\langle  \int_0^\cdot e^{\int_0^s r_udu}d\E_{\Q}[\xi_0|\mathcal{F}_s] , \mu^\theta (A) \rangle \\&+ \langle 
 \int_0^\cdot e^{\int_0^s r_udu}d\E_{\Q}[\xi_0|\mathcal{F}_s] , 
\int_0^{\cdot}\int_{\R}\beta_t(z)d\langle\mu\rangle(dt,dz)\rangle =0.
\end{align*}

Another way to look at \eqref{eq:expressbeforebsde} is by application of Theorem \ref{thm:girsanov}. In fact, setting $\psi_t:= e^{-\int_0^t r_s ds}$, $t\in [0,T]$, we have:
\begin{align}
\psi_tY^{\pi,\theta}_t 
= &\E_{\Q}\bigg{[}
\psi_TF-\int_t^T\psi_s\pi_s\bigg[(\alpha_s-r_s)ds-\sigma_sdB_s
-\int_{\Rr}\gamma_s(z)\tilde{H}(ds,dz)\bigg]\bigg{|}\mathcal{F}_t\bigg{]}\nonumber\\
=&\E_{\Q}\bigg{[}
\psi_TF-\int_t^T\psi_s\pi_s\bigg[(\alpha_s-r_s)ds+\sigma_s\bigg(dB^{\theta}_s+\theta^B_s\lambda^B_s ds\bigg)\nonumber\\
&+\int_{\Rr}\gamma_s(z)\bigg(\tilde{H}^{\theta}(ds,dz)+\theta^H_s(z)\nu(dz)\lambda^H_sds\bigg)\bigg]\bigg{|}\mathcal{F}_t\bigg{]}\nonumber
\end{align}
By use of the martingale property, we get
\begin{align}
\psi_tY^{\pi,\theta}_t
=&\E_{\Q}\bigg{[}
\psi_TF-\int_t^T\int_{{\R}}\psi_s\pi_s\bigg[\bigg((\alpha_s-r_s)+\sigma_s\theta^B_s\bigg){1}_{\{0\}}(z)\nonumber\\
&+\gamma_s(z)\theta^H_s(z){1}_{\Rr}(z)\bigg]\langle M\rangle(ds,dz)\bigg{|}\mathcal{F}_t\bigg{]}.\label{eq:prettynice2}
\end{align}
With $d\Q = Z_T d\Prob$ and $Z_T =  \mathcal{E}_T \big( \int_0^{\cdot} \int_\R \beta_s(z) \mu(ds,dz) \big)$, we recognize \eqref{eq:prettynice2} as the solution of
the linear BSDE of type, cf. \eqref{eq:expbsde}:
\begin{align*}
Y^{\pi,\theta}_t=\E\bigg{[}
\frac{\Psi_T}{\Psi_t}F-\int_t^T\int_{{\R}}\frac{\Psi_s}{\Psi_t}\pi_s\bigg[\bigg((\alpha_s&-r_s)+\sigma_s\theta^B_s\bigg){1}_{\{0\}}(z)\nonumber\\
+\gamma_s(z)\theta^H(&s,z){1}_{\Rr}(z)\bigg]\langle \mu\rangle(ds,dz)\bigg{|}\mathcal{F}_t\bigg{]}.
\end{align*}
Here we recall that $r$ is bounded, $\theta \in \II_\FF$ so that $\Q\in \Qq_\FF$, and \eqref{nec-value} holds.
By Lemma \ref{lem:carbonelemma} and Remark \ref{linearFF}, the $\Prob$-dynamics correspond to
\begin{align}
dY^{\pi,\theta}_t=&-\int_{{\R}}\bigg{\{}-r_t{1}_{\{0\}}(z)Y_t+\theta^B_t{1}_{\{0\}}(z)\bar{Z}_t
+\theta^H_t(z){1}_{\Rr}(z)\bar{U}_t(z)\nonumber\\
&-\pi_t\bigg[\bigg((\alpha_t-r_t)
+\sigma_t\theta^B_t\bigg){1}_{\{0\}}(z)
+\gamma_t(z)\theta^H_t(z){1}_{\Rr}(z)\bigg]\bigg{\}}\langle \mu \rangle(dt,dz)\label{eq:Yone}\\
&+\int_{{\R}}\bar{Z}_t{1}_{\{0\}}(z)+\bar{U}_t(z){1}_{\Rr}(z)\mu(dt,dz)+dN_t\nonumber\\
Y^{\pi,\theta}_T=&F,\nonumber
\end{align}
where $N$ is an $(\FF,\Prob)$-martingale orthogonal to $\mu$.
Comparing (\ref{eq:Yone}) and (\ref{eq:Ytwo}). We see that
\begin{align*}
\bar{Z}_t&=e^{\int_0^tr_sds}Z_t\\
\bar{U}_t(z)&=e^{\int_0^tr_sds}U_t(z),\\
N_t&=\int_0^t e^{\int_0^sr_udu}d\E_{\Q}[\xi_0|\mathcal{F}_s], \quad (N_0=0). 
\end{align*}



We can state the corresponding result to Theorem \ref{thm:main} for the case of information flow $\FF$.
Set $f(\pi_t, \theta_t) := f_t(\lambda,y,z,u,\pi_t, \theta_t)$ as short-hand notation.

\begin{theorem}
\label{thm:main2}
Let $(\xi,f)$ be standard parameters.
Suppose that for all $(\omega,t,\lambda, y,z,u)$ there exist $\hat{\pi}_t=\hat{\pi}(\omega,t, \lambda, y,z,u)$ and $\hat{\theta}_t=\hat{\theta}(\omega,t,\lambda,y,z,u)$ such that, for all admissible portfolios $\pi\in \Pi_\FF$ and all admissible probability measures $\Q=\Q^\theta \in \Qq_\FF$, we have:
\begin{align}
f(\hat{\pi}_t,\theta_t)&\leq f(\pi_t,\theta_t)\leq f(\pi_t,\hat{\theta}_t),\label{eq:compdriversF}
\end{align}
for a.a. $(\omega,t)$. Assume that the conditions of Theorem \ref{thm:carbonethm} hold, and that $\hat{\pi}$ and $\hat{\theta}$ are admissible. Suppose that for all admissible $(\pi,\theta)$ there exists a unique solution to the BSDE with $(\xi,f(\pi_t,\theta_t))$ as terminal condition and generator, respectively. Then
\begin{align*}
\hat{Y}_t=\essinf_{\pi\in\Pi_\FF}Y^{\pi}_t=: Y_t=\esssup_{\Q\in\mathcal{Q}_\FF}\bigg{\{}\essinf_{\pi\in\Pi_\FF}Y^{\pi,\theta}_t\bigg{\}}=\esssup_{\Q \in\mathcal{Q}_\FF}Y^{\theta}_t.
\end{align*}
\end{theorem}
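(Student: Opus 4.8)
The plan is to rerun, step for step, the saddle-point argument of Theorem \ref{thm:main}, substituting the $\FF$-comparison Theorem \ref{thm:carbonethm} for Theorem \ref{thm:comparison} and the $\FF$-BSDEs of the form \eqref{eq:bsdegen}--\eqref{eq:Yone} for the $\GG$-BSDEs. One cannot simply invoke Theorem \ref{thm:main}, both because the admissible set $\Qq_\FF$ (hence the family of equations) is different and because every $\FF$-BSDE here carries an extra orthogonal-martingale component $N$. Thus, for each admissible pair $(\pi,\theta)$, I would set up the three linear BSDEs of type \eqref{eq:Yone} with common terminal value $F$ and generators $f(\hat\pi_t,\theta_t)$, $f(\pi_t,\theta_t)$, $f(\pi_t,\hat\theta_t)$, whose unique solutions in $S_\FF\times\II_\FF\times\mathcal{L}_\FF^{2,b}$ (uniqueness being assumed) I denote $(Y^{\theta},\cdot,\cdot)$, $(Y^{\pi,\theta},\cdot,\cdot)$, $(Y^{\pi},\cdot,\cdot)$, with the convention $\hat Y = Y^{\hat\pi} = Y^{\hat\theta}$ for the generator $f(\hat\pi_t,\hat\theta_t)$.

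The first step is to apply Theorem \ref{thm:carbonethm} twice. Since the three equations share the terminal condition $F$ and, by \eqref{eq:compdriversF} (which holds for all $(\omega,t,\lambda,y,z,u)$, hence in particular along the solution of the lower equation in each comparison), their generators are ordered $\Prob$-a.s., the comparison theorem gives
\[
Y^{\theta}_t\;\le\;Y^{\pi,\theta}_t\;\le\;Y^{\pi}_t,\qquad\Prob\text{-a.s.},\ \text{for every }t\in[0,T],
\]
for all admissible $(\pi,\theta)$. Freezing $\theta$ in the left inequality and taking $\essinf_{\pi\in\Pi_\FF}$, and freezing $\pi$ in the right one and taking $\esssup_{\Q\in\Qq_\FF}$, one gets $Y^{\theta}_t\le\essinf_{\pi}Y^{\pi,\theta}_t$ and $\esssup_{\Q}Y^{\pi,\theta}_t\le Y^{\pi}_t$. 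Using that $\hat Y$ lies in both families $\{Y^{\theta}\}$ and $\{Y^{\pi}\}$, one then reads off, exactly as in the proof of Theorem \ref{thm:main}, the chain
\[
Y_t=\essinf_{\pi\in\Pi_\FF}\Big\{\esssup_{\Q\in\Qq_\FF}Y^{\pi,\theta}_t\Big\}\le\essinf_{\pi\in\Pi_\FF}Y^{\pi}_t\le\hat Y_t\le\esssup_{\Q\in\Qq_\FF}Y^{\theta}_t\le\esssup_{\Q\in\Qq_\FF}\Big\{\essinf_{\pi\in\Pi_\FF}Y^{\pi,\theta}_t\Big\},
\]
where the first equality is \eqref{Y3,2}. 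Since $\esssup\essinf\le\essinf\esssup$ in general, the rightmost term is $\le$ the leftmost, so all five quantities coincide, which is the assertion.

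The step I expect to demand real care is checking that the three comparison BSDEs actually satisfy the hypotheses of Theorem \ref{thm:carbonethm}, i.e.\ that the difference-quotient coefficients $a$ and $b$ read off from the affine generator $f$ in \eqref{eq:Yone} fulfil conditions (i)--(ii) of Lemma \ref{lem:carbonelemma}. Here one identifies the Dol\'eans exponential of $\int_0^{\cdot}\int_\R b_s(z)\,\mu(ds,dz)$ with (a time-shift of) the density process $Z^{\theta}$, resp.\ $Z^{\hat\theta}$, which by Definition \ref{def:admpro} is a positive, uniformly integrable, $L^2$-bounded $(\FF,\Prob)$-martingale, so (i) holds; boundedness of $a$ follows from $|r_t|<C$ together with the bounds \eqref{eq:bound4} on $\theta$; and (ii) follows from the integrability of $\lambda^B$ built into the class $\LL$. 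All of this, together with the standing assumptions that $\hat\pi$ and $\hat\theta$ are admissible and that every BSDE in play is well-posed, is precisely what the blanket hypothesis that the conditions of Theorem \ref{thm:carbonethm} hold is meant to cover; once it is in force, the argument above runs verbatim.
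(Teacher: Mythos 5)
Your proposal is correct and follows essentially the same route as the paper, which proves Theorem \ref{thm:main2} simply by rerunning the saddle-point argument of Theorem \ref{thm:main} (the two comparison inequalities $Y^{\theta}\le Y^{\pi,\theta}\le Y^{\pi}$, then $\esssup\essinf\le\essinf\esssup$) with Theorem \ref{thm:carbonethm} in place of Theorem \ref{thm:comparison}. Your extra paragraph verifying conditions (i)--(ii) of Lemma \ref{lem:carbonelemma} via the density process and the bounds \eqref{eq:bound4} is a reasonable elaboration of what the paper subsumes under its blanket hypothesis.
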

\begin{proof}
The argument is the same as for Theorem \ref{thm:main}, but rely on the comparison theorem in the case $\FF$.
\end{proof}

From the BSDE in \eqref{eq:Yone}, we can see that
\begin{align}
f_t(y, \zeta, u(\cdot), \pi, \theta) = 
-&\Big{\{}-r_ty-\pi_t(\alpha_t-r_t)+\theta^B(\zeta-\pi_t\sigma_t)\Big{\}}{1}_{\{0\}}(z)\nonumber\\
&+\Big\{ u(z)-\pi_t\gamma_t(z)\Big{\}}\theta^H(z){1}_{\Rr}(z),
\label{eq:fgenerator}
\end{align}
where
$
\langle \mu \rangle (dt,dz) =   \delta_{\{0\}}(dz) \lambda^B(s) ds+  1_\Rr (z) \nu(dz)\lambda^H (s)ds.
$
So we obtain
\begin{align*}
g_\cdot(\lambda,y,\zeta,u(\cdot),\pi,\theta)
=&\Big{\{}-r y-\pi (\alpha - r)- \theta^B \pi \sigma \Big{\}} \lambda^B - \int_\Rr \pi \gamma(z) \theta^H(z)\nu(dz)\lambda^H\\
&+\theta^B\zeta {\lambda^B} + \int_\Rr \theta^H(z)u(z)\nu(dz) {\lambda^H}.
\end{align*}

We observe that BSDEs of the type \eqref{eq:Yone} with \eqref{eq:compdriversF} satisfy the conditions 
of Theorem \ref{thm:carbonethm}. 
In the same way as for Theorem \ref{final}, condition \eqref{eq:compdriversF} yields to the study of saddle points.
Hence we obtain the following result:

\begin{theorem}
\label{final2}
Let the reference filtration be $\FF$.
If $(\hat\pi , \hat\Q)\in \Pi_\FF \times \Qq_\FF$ satisfies the equations:
\begin{align}
&\Big( e^{\int_0^tr_sds}\hat{Z}_t-\hat{\pi}_t\sigma_t\Big) \lambda^B_t =0,\label{eq:hihi1F}\\
&\int_{\Rr}\Big(e^{\int_0^tr_sds}\hat{U}_t(z)-\hat{\pi}_t\gamma_t(z)\Big)\nu(dz)\lambda^H_t =0,\label{eq:hihi2F}\\
&(\alpha_t-r_t)+\sigma_t\hat\theta^B_t\lambda^B_t+\int_{\Rr}\gamma_t(z)\hat\theta^H_t(z)\nu(dz)\lambda^H_t =0,
\label{eq:hihi3F}
\end{align}
where $(\hat{Z},\hat{U})\in\mathcal{I_{\FF}}$ are the integrands in the integral representation:
\begin{align}
e^{-\int_0^Tr_tdt}F=\hat\xi_0+\int_0^T\hat{Z}_sdB^{\hat\theta}_s+\int_0^T\int_{\Rr}\hat{U}_s(z)\tilde{H}^{\hat\theta}(ds,dz),
\label{eq:pageref2}
\end{align}
and $\hat\xi_0 \in L^2(\Omega, \Ff, \hat\Q)$ is a random variable orthogonal to the stochastic integrals (cf. Theorem \ref{theorem:ito_representation2}), then $(\hat\pi, \hat\Q)$ is the solution of the optimisation problem \eqref{Y3,2}.
The optimal price process $\hat Y_t = Y^{\hat\pi, \hat\Q}_t = Y_t$, $t \in [0,T]$, is given by:
\begin{align}
\hat Y_t=\E_{\hat\Q} \Big[ e^{-\int_t^Tr_sds}F&-\int_t^Te^{\int_0^s r_udu}\hat{Z}_sdB^{\hat\theta}_s
-\int_t^T\int_{\Rr}e^{\int_0^s r_udu}\hat{U}_s(z)\tilde{H}^{\hat\theta}(ds,dz) \vert \Ff_t \Big],
\label{opt-price-F}
\end{align}
where 
\[
dB^{\hat\theta}_t:=dB_t-{\hat\theta^B}_t\lambda^B_tdt,
\]
and
\[
\tilde{H}^{\hat\theta}(dt,dz):=\tilde{H}(dt,dz)-{\hat\theta^H}_t(z)\nu(dz)\lambda^H_t dt.
\]
These are orthogonal $(\FF,\hat\Q)$-martingale random fields.
Under probability measure $\Prob$, the optimal price process has the following dynamics:
\begin{equation}
\begin{split}
d \hat Y_t =  & \Big(  \hat Y _t  r_t + \hat  \pi_t \big( \alpha_t - r_t \big) \Big) dt + \hat \pi _t \sigma_t dB_t 
 + \int_\Rr \hat \pi_t \gamma_t(z) \tilde H(dt,dz) + e^{\int_0^t r_s ds} d \E_{\hat\Q} [\hat\xi_0 \vert \Ff_t] \\  
 \hat Y_0 =& E_{\hat\Q} [e^{-\int_0^Tr_tdt}F ]. 
\end{split}
\label{opt-price-dyn-F}
\end{equation}
\end{theorem}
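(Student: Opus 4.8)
The argument follows the pattern of the proof of Theorem \ref{final}, now with the $\FF$-versions of the tools: the stochastic-game reduction of Theorem \ref{thm:main2}, the comparison Theorem \ref{thm:carbonethm}, and the linear-BSDE representation of Lemma \ref{lem:carbonelemma} together with Remark \ref{linearFF}. Recall from \eqref{eq:Ytwo}--\eqref{eq:Yone} that for each admissible $(\pi,\theta)$, with $\Q=\Q^\theta\in\Qq_\FF$, the process $Y^{\pi,\theta}$ solves a BSDE of the form \eqref{eq:bsdegen} with terminal value $F$ and generator $f$ as in \eqref{eq:fgenerator}, the $\mu$-integrands being $\bar Z_t=e^{\int_0^t r_s\,ds}Z_t$ and $\bar U_t(z)=e^{\int_0^t r_s\,ds}U_t(z)$ and the orthogonal martingale part being $N_t=\int_0^t e^{\int_0^s r_u\,du}\,d\E_\Q[\xi_0\vert\Ff_s]$. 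The generator $f$ is affine in $\pi$, in $\theta^B$, and (as a functional) in $\theta^H$; since $\Pi_\FF$ and $\Qq_\FF$ are convex, the saddle-point inequalities \eqref{eq:compdriversF} required by Theorem \ref{thm:main2} reduce to the first-order (critical-point) conditions: the coefficient of $\pi$ in $f(\cdot,\hat\theta)$ and the coefficients of $\theta^B$ and $\theta^H$ in $f(\hat\pi,\cdot)$ must vanish $\Prob\times dt$-a.e. Computing these in the $\langle\mu\rangle$-integrated form and using the identifications above, they are precisely \eqref{eq:hihi3F}, \eqref{eq:hihi1F} and \eqref{eq:hihi2F}, with $(\hat Z,\hat U)$ and $\hat\xi_0$ the data of the integral representation \eqref{eq:pageref2} of $e^{-\int_0^T r_t\,dt}F$ relative to $B^{\hat\theta}$, $\tilde H^{\hat\theta}$ (Theorem \ref{thm:ito_representation2}). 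Thus \eqref{eq:hihi1F}--\eqref{eq:hihi3F} are exactly the statement that $(\hat\pi,\hat\theta)$ is a saddle point of $f$.

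Next I would discharge the remaining hypotheses of Theorem \ref{thm:main2}: admissibility of $\hat\pi$ and of $\hat\Q=\Q^{\hat\theta}$ is assumed; for every admissible $(\pi,\theta)$ the pair $(e^{-\int_0^T r_t\,dt}F,\,f(\pi_t,\theta_t))$ is a set of standard parameters in the $\FF$-sense thanks to the boundedness of $r$, the bounds \eqref{eq:bound4} on $\theta$, the admissibility conditions of Definition \ref{def:adm} on $\pi$, and the integrability requirement \eqref{nec-value}; and $f$ meets the structural requirements of the comparison Theorem \ref{thm:carbonethm} for the same reasons as the generator of the $\GG$-problem. Existence and uniqueness of the solution $(Y,\phi,N)$ of each such BSDE then follow from the existence-uniqueness result for \eqref{eq:bsdegen} with standard parameters (equivalently, from Lemma \ref{lem:carbonelemma} and Remark \ref{linearFF} for the linear equations at hand). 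Theorem \ref{thm:main2} then yields $\hat Y_t=\essinf_{\pi\in\Pi_\FF}Y^\pi_t=:Y_t=\esssup_{\Q\in\Qq_\FF}\bigl\{\essinf_{\pi\in\Pi_\FF}Y^{\pi,\theta}_t\bigr\}=\esssup_{\Q\in\Qq_\FF}Y^\theta_t$, so $(\hat\pi,\hat\Q)$ solves \eqref{Y3,2}, i.e.\ \eqref{Ysolution,2} holds with this pair, and $\hat Y$ is the optimal total price process.

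To obtain \eqref{opt-price-F} I would insert \eqref{eq:hihi1F}--\eqref{eq:hihi3F} into the BSDE \eqref{eq:Ytwo} written for $(\hat\pi,\hat\theta)$: by \eqref{eq:hihi1F}--\eqref{eq:hihi2F} the $\hat\theta^B$- and $\hat\theta^H$-weighted terms of the $\langle\mu\rangle$-drift disappear, and \eqref{eq:hihi3F} is the relation that cancels the remaining $\hat\pi$-dependent drift against the compensators introduced by the measure change \eqref{eq:changemeasuregen}, so that $e^{-\int_0^t r_s\,ds}\hat Y_t$ equals a stochastic integral against the $(\FF,\hat\Q)$-martingale random field $\mu^{\hat\theta}$ plus the $(\FF,\hat\Q)$-martingale $\E_{\hat\Q}[\hat\xi_0\vert\Ff_t]$ orthogonal to $\mu^{\hat\theta}$; hence $e^{-\int_0^t r_s\,ds}\hat Y_t$ is an $(\FF,\hat\Q)$-martingale. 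Taking conditional expectation given $\Ff_t$, using $\E_{\hat\Q}[e^{-\int_0^T r_s\,ds}F\vert\Ff_T]=e^{-\int_0^T r_s\,ds}F$, and splitting off the $\mu^{\hat\theta}$-integrals through \eqref{eq:pageref2}, gives \eqref{opt-price-F}; at $t=0$, where $\Ff_0$ is trivial, this reads $\hat Y_0=\E_{\hat\Q}[e^{-\int_0^T r_t\,dt}F]$. Finally, rewriting $dB^{\hat\theta}_t=dB_t-\hat\theta^B_t\lambda^B_t\,dt$ and $\tilde H^{\hat\theta}(dt,dz)=\tilde H(dt,dz)-\hat\theta^H_t(z)\nu(dz)\lambda^H_t\,dt$ in the differential form of \eqref{opt-price-F}, and using \eqref{eq:hihi1F}--\eqref{eq:hihi3F} once more to identify $e^{\int_0^t r_s\,ds}\hat Z_t=\hat\pi_t\sigma_t$ and $e^{\int_0^t r_s\,ds}\hat U_t(z)=\hat\pi_t\gamma_t(z)$ inside the diffusion and jump integrals, produces the $\Prob$-dynamics \eqref{opt-price-dyn-F}.

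I expect the main obstacle to be the self-referential structure of the characterising system: $(\hat Z,\hat U)$ and $\hat\xi_0$ are defined through the representation \eqref{eq:pageref2}, which depends on $B^{\hat\theta}$, $\tilde H^{\hat\theta}$ and hence on $\hat\theta$, while \eqref{eq:hihi3F} fixes $\hat\theta$ and \eqref{eq:hihi1F}--\eqref{eq:hihi2F} then couple $\hat\pi$ to $(\hat Z,\hat U)$. Making rigorous the claim that the $\mu$-integrands and the orthogonal martingale produced by the BSDE \eqref{eq:Ytwo} for $(\hat\pi,\hat\theta)$ are, up to the factor $e^{\int_0^\cdot r_s\,ds}$ (respectively up to $N_t=\int_0^t e^{\int_0^s r_u\,du}\,d\E_{\hat\Q}[\hat\xi_0\vert\Ff_s]$), precisely the data of \eqref{eq:pageref2} rests on the uniqueness parts of Theorem \ref{thm:ito_representation2} and of the $\FF$-BSDE theory; and checking the integrability hypotheses (the conditions (i)--(iii) of Lemma \ref{lem:carbonelemma} for the coefficients at hand and \eqref{eq:uniintcondeq}) for admissible $(\hat\pi,\hat\theta)$ is where Definitions \ref{def:admpro} and \ref{def:adm} are used in full. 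A related delicate point, already present in the $\GG$ case, is that the variation in $\theta^H(\cdot)$ must be handled so as to yield the $\nu$-integrated relation \eqref{eq:hihi2F} rather than a pointwise one. The converse assertion, that any optimal $(\hat\pi,\hat\Q)$ must satisfy \eqref{eq:hihi1F}--\eqref{eq:hihi3F}, follows by running the same convexity and first-order argument in reverse, as in the $\GG$ case after Theorem \ref{final}.
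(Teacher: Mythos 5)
Your proposal is correct and follows essentially the same route as the paper: the BSDE formulation \eqref{eq:Ytwo}--\eqref{eq:Yone} with generator \eqref{eq:fgenerator}, the game-theoretic reduction of Theorem \ref{thm:main2} via the comparison Theorem \ref{thm:carbonethm}, convexity plus first-order conditions yielding \eqref{eq:hihi1F}--\eqref{eq:hihi3F}, and then the martingale/representation argument for \eqref{opt-price-F} and the $\Prob$-dynamics \eqref{opt-price-dyn-F}. The caveats you flag (the $\nu$-integrated rather than pointwise form of \eqref{eq:hihi2F}, and the identification of the BSDE data with the representation \eqref{eq:pageref2}) are handled at the same level of detail in the paper itself, so nothing is missing relative to its argument.
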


\begin{remark}
\label{F-comment1}
We observe that the optimal strategy $(\hat\pi, V^{\hat\pi}, C^{\hat\pi})$ is then given by the process $\hat\pi$ as characterised above. The wealth $V^{\hat\pi}$ 
on the market has the initial value $V^{\hat\pi} _0 = v= E_{\hat\Q} [e^{-\int_0^Tr_tdt}F ]$, and the cost is a process $C^{\hat\pi}_t$
, $t \in [0,T]$, given as follows.
Observe that 
$$
d\hat Y _t = dV^{\hat \pi}_t + \big( \hat Y_t - V^{\hat\pi}_t \big)r_t dt + e^{\int_0^t r_s ds} d \E_{\hat \Q} [\hat \xi_0 \vert \Ff_t],
$$
then 
$$
dC^{\hat \pi}_t = C^{\hat \pi}_tr_tdt + e^{\int_0^t r_s ds} d \E_{\hat \Q} [\hat \xi_0 \vert \Ff_t]; 
\quad C^{\hat \pi}_0 = 0.
 $$
which results in $C^{\hat \pi}_t = e^{\int_0^t r_s ds}  \big( \E_{\hat \Q} [\hat \xi_0 \vert \Ff_t] -  \E_{\hat \Q} [\hat \xi_0 ] \big).
$
\end{remark}

\begin{remark}
\label{F-comment2}
Observe that the optimal $\hat\Q$ is a martingale measure for the optimal price process $\hat Y$.
In fact, the price process is given by:
\begin{align*}
e^{-\int_0^tr_sds}\hat{Y}_t=\E_{\hat{\Q}}[e^{-\int_0^Tr_sds}\hat{Y}_T|\mathcal{F}_t].\label{eq:YtFmart}
\end{align*}
\end{remark}

\begin{remark}
\label{F-comment3}
The characterizing equations \eqref{eq:hihi1}-\eqref{eq:hihi3} and  \eqref{eq:hihi1F}-\eqref{eq:hihi3F} are formally the same, being the difference on the measurability properties of the processes involved. 
Denote $\hat \Q_\GG = \Q^{\hat\pi} \in \Qq_\GG$ in Theorem \ref{final} and $\hat \Q_\FF = \Q^{\hat\pi} \in \Qq_\FF$ in Theorem \ref{final2}.
From equations \eqref{eq:hihi3} and \eqref{eq:hihi3F} combined with Theorem \ref{thm:girsanov}, we can see that
$\hat\Q_{\GG \vert \FF} = \hat \Q_\FF $.
\end{remark}


\section{Conclusions and example}
\label{sec:analen}

With the intent of finding a hedging strategy in the incomplete market \eqref{bond}-\eqref{stock} we have studied the optimization problem \eqref{Y}. We have developed the solution using BSDEs and their comparison theorems. 
for this we refer to \cite{DS2014} and \cite{Carbone2008}, of which we adapt the results.
For our approach it is crucial the result of \cite{OS11} developed for stochastic differential games. Indeed we have transformed the hedging problem under model uncertainty in a min-max type problem \eqref{Y3} by exploiting the explicit representation of the risk-measure considered \eqref{risk-measure}.

\bigskip
The noises considered are naturally linked to two different filtrations. The filtration $\GG$ captures all the statistical properties of the noises, allowing to exploit the underlying Gaussian and Poisson structure, see Definition \ref{definition:listA}. The filtration $\FF$ is substantially the filtration generated by the noises. We study the problem \eqref{Y3} with respect to both situations and we observe that $\Ff_T = \Gg_T$. 
Correspondingly, we have proposed two BSDEs related to the the two filtered probability spaces. The terminal condition is the same. The results obtained show differences in terms of adaptability of the solutions and the structure of the solution itself, cf. \eqref{opt-price-dyn-G} and \eqref{opt-price-dyn-F}.

Both set-ups lead to hedging strategies $(\hat\pi, V^{\hat\pi}, C^{\hat\pi})$ with presence of cost process. In the case of $\GG$, the cost process accounts for the anticipated knowledge of the time-change. 
In the case of $\FF$, the cost process represents the spread  between the perfect hedge and the best self-financing strategy. Here the distances are evaluated in terms of the risk-measure \eqref{risk-measure}.

From the methodological point of view, we remark that the BSDEs presented for the study in the case of filtration $\FF$ are based on the properties of the martingale random fields and we recall that our noises are martingale random fields with respect to both filtrations. 
In fact in this framework of general martingales we can see the correspondence between the two set-ups generated by the two filtrations.

As explained earlier, from a financial modeling perspective it is better suited to consider the information flow given by $\FF$.

\begin{example} {\bf Toy example.}
Let $e^{-\int_0^Tr_tdt}F$ is $\mathcal{F}^{\Lambda}_T-$measurable ($r$ deterministic). 

In the information flow $\GG$, the integral representation \eqref{eq:summary1} shows that the optimal integrands $(\hat Z, \hat U)\in \II_\GG$ are null and that the optimal strategy presents $\hat\pi=0$, 
and $v= \E_{\hat\Q_\GG} [e^{-\int_0^Tr_tdt}F]$.
Here the values $(\hat\theta^B, \hat\theta^H)$ for the probability measure $\hat\Q_\GG = \Q^{\hat\theta} \in \Qq_\GG$ are given by \eqref{eq:hihi3}. See Theorem \ref{final}.
The cost process is $C^{\hat\pi} _0 = e^{-\int_0^Tr_tdt}F - \E_{\hat\Q_\GG} [e^{-\int_0^Tr_tdt}F]$, $C^{\hat\pi} _t = 0$, $t\in (0,T]$.

Consider the case of information flow $\FF$.
Denote $\HH_\FF \subset L^2(\Omega,\Ff, \hat\Q_\FF)$ and $\HH_\GG \subset L^2(\Omega,\Ff, \hat\Q_\GG)$ the spaces generated by the integrals $\int_0^T \int_\R \phi_t(z) \mu(dt,dz)$ for all $\phi \in \II_\FF$ and $\phi \in \II_\GG$, respectively.
Being $\hat\Q_\FF = \hat\Q_{\GG\vert\FF}$ (see Remark \ref{F-comment3}), then $\HH_\FF \subset \HH_\GG$.
Hence, 
$$
L^2  (\Omega,\Ff, \hat\Q_\GG) \ominus \HH_\FF \supset L^2  (\Omega,\Ff, \hat\Q_\GG) \ominus \HH_\GG \ni e^{-\int_0^T r_t dt} F.
$$
Hence $\hat \xi_0 = e^{-\int_0^T r_t dt} F$ and $(\hat Z, \hat U)\equiv 0$ in the representation \eqref{eq:pageref2}.
The optimal strategy is then $\hat\pi=0$, $v= \E_{\hat\Q_\FF} [e^{-\int_0^Tr_tdt}F]$, and 
$C^{\hat\pi} _t = \E_{\hat\Q_\FF} \big[  e^{-\int_0^Tr_tdt}F \vert \Ff_t] - \E_{\hat\Q_\FF} [e^{-\int_0^Tr_tdt}F]$, $t\in [0,T]$.

\end{example}

\bigskip

In line with Remarks \ref{G-comment2} and \ref{F-comment2}, we see that the optimal measures $\Q_\FF$ and  $\Q_\GG$ are risk-neutral in the given market. In fact, applying \eqref{eq:hihi3} or \eqref{eq:hihi3F}, i.e.
\[
(\alpha_t-r_t)+\sigma_t\hat\theta^B_t+\int_{{\R}_0}\gamma_t(z)\hat\theta^H_t(z)\nu(dz)\lambda_t=0,
\]
we have
\begin{align*}
d\left(e^{-\int_0^tr_sds}S^{(1)}_t\right)
&=e^{-\int_0^tr_sds}S^{(1)}_t\bigg[(\alpha_t-r_t)dt+\sigma_tdB_t
+\int_{{\R}_0}\gamma_t(z)\tilde{H}(dt,dz)\bigg]\\
&=e^{-\int_0^tr_sds}S^{(1)}_t\bigg[\sigma_tdB^{\hat\theta}_t+\int_{{\R}_0}\gamma_t(z)\tilde{H}^{\hat\theta}(dt,dz)\bigg].
\end{align*}
This result is consistent with the observations of \cite{OS11-2} in the context of dynamics given by a jump diffusion and in the literature related to risk-minimizing strategies.

\bigskip
\noindent
{\bf Acknowlegdements}\\
We acknowledge the support of the  Centre of Advanced Study (CAS) at the Norwegian Academy of Science and Letters that has hosted and funded the research project Stochastics in Environmental and Financial Economics (SEFE) during the academic year 2014/15.


\bibliography{DK-Aarhus200415.bbl}

\begin{thebibliography}{CGMY03}

\bibitem[App04]{Applebaum2004}
David Applebaum.
\newblock {\em {L{\'e}vy Processes and Stochastic Calculus}}.
\newblock {Cambrigde University press}, 2004.

\bibitem[Bil95]{PB95}
Patrick Billingsley.
\newblock {\em Probability and measure}.
\newblock Wiley Series in Probability and Mathematical Statistics. John Wiley
  \& Sons, Inc., New York, third edition, 1995.
\newblock A Wiley-Interscience Publication.

\bibitem[BNMS06]{Barndorff-Nielsen2006}
Ole Barndorff-Nielsen, Makoto Maejima, and Ken-iti Sato.
\newblock Infinite divisibility for stochastic processes and time change.
\newblock {\em Journal of Theoretical Probability}, 19(2):411--446, 2006.

\bibitem[BNNS02]{Barndorff-Nielsen2002}
Ole~E. Barndorff-Nielsen, Elisa Nicolato, and Neil Shephard.
\newblock Some recent developments in stochastic volatility modelling.
\newblock {\em Quantitative Finance}, 2(1):11--23, 2002.

\bibitem[BP15]{Balter2015}
Anne Balter and Antoon Pelsser.
\newblock {Pricing and hedging in incomplete markets with model ambiguity}.
\newblock SSRN 2459405, January 2015.

\bibitem[Br{\'e}81]{Bremaud1981}
Pierre Br{\'e}maud.
\newblock {\em Point Processes and Queues - Martingale dynamics}.
\newblock Springer, 1981.

\bibitem[BVW75]{Boel1975}
Ren{\'e} Boel, Pravin Varaiya, and Eugene Wong.
\newblock {Martingales on jump processes I. Representation results}.
\newblock {\em {SIAM} Journal on control}, 13(5), 1975.

\bibitem[CCR14]{Ceci}
Claudia Ceci, Alessandra Cretarola, and Francesco Russo.
\newblock B{SDE}s under partial information and financial applications.
\newblock {\em Stochastic Process. Appl.}, 124(8):2628--2653, 2014.

\bibitem[CE02]{ChenEpstein2014}
Zengjing Chen and Larry Epstein.
\newblock Ambiguity, risk, and asset returns in continuous time.
\newblock {\em Econometrica}, 70(4):1403--1443, 2002.

\bibitem[CFS08]{Carbone2008}
Raffaella Carbone, Benedetta Ferrario, and Marina Santacroce.
\newblock Backward stochastic differential equations driven by c{\`a}dl{\`a}g
  martingales.
\newblock {\em Theory of Probability and its Applications}, 52(2):304--314,
  2008.

\bibitem[CGMY03]{carr2003}
Peter Carr, Heli\`ete German, Dilip Madan, and Marc Yor.
\newblock {Stochastic volatility for L{\'e}vy processes}.
\newblock {\em Mathematical Finance}, 13(3):345--282, 2003.

\bibitem[CW75]{Cairoli1975}
R.~Cairoli and John~B. Walsh.
\newblock Stochastic integrals in the plane.
\newblock {\em Acta Math.}, 134:111--183, 1975.

\bibitem[Dav76]{Davis1976}
Mark H.~A. Davis.
\newblock The representation of martingales of jump processes.
\newblock {\em {SIAM} Journal on control and optimization}, 14(4):623--638,
  1976.

\bibitem[DE10]{DiNunno2010}
Giulia {Di Nunno} and Inga~B. Eide.
\newblock Minimal-variance hedging in large financial markets: random fields
  approach.
\newblock {\em Stochastic Analysis and Applications}, 28(1):54--85, 2010.

\bibitem[Del12]{Delong2012}
Lukasz Delong.
\newblock No-good-deal, local mean variance, ambiguity pricing and hedging of
  an insurance payment process.
\newblock {\em ASTIN Bulletin}, 42:203--232, 2012.

\bibitem[Del13]{Delong2013}
Lukasz Delong.
\newblock {\em Backward Stochastic Differential Equations with Jumps and their
  Actuarial and Financial Applications}.
\newblock European Actuarial Academy Series. Springer-Verlag, 2013.

\bibitem[DR07]{DiNunno2007b}
Giulia {Di Nunno} and Yuri Rozanov.
\newblock Stochastic integrals and adjoint derivatives.
\newblock In F.E. Benth, G.~{Di Nunno}, T.~Lindstr{\o}m, B.~{\O}ksendal, and
  T.~Zhang, editors, {\em Stochastic Analysis and Applications}, volume~2 of
  {\em Abel Symposia}, pages 265--307. Springer Berlin Heidelberg, 2007.

\bibitem[DS13]{Sjursen2011}
Giulia {Di Nunno} and Steffen Sjursen.
\newblock On chaos representation and orthogonal polynomials for the doubly
  stochastic {P}oisson process.
\newblock In Robert~C. Dalang, Marco Dozzi, and Francesco Russo, editors, {\em
  Seminar on Stochastic Analysis, Random Fields and Applications VII},
  volume~67 of {\em Progress in Probability}, pages 23--54. Springer Basel,
  2013.

\bibitem[DS14]{DS2014}
Giulia {Di Nunno} and Steffen Sjursen.
\newblock {BSDE}s driven by time-changed {L}\'evy noises and optimal control.
\newblock {\em Stochastic Processes and their Applications}, 124:1679--1709,
  2014.

\bibitem[Gri75]{Grigelions1975}
Bronius Grigelionis.
\newblock Characterization of stochastic processes with conditionally
  independent increments.
\newblock {\em Lithuanian Mathematical Journal}, 15(4):562--567, 1975.

\bibitem[Hes93]{heston1993}
Steven~L Heston.
\newblock A closed-form solution for options with stochastic volatility with
  applications to bond and currency options.
\newblock {\em Review of financial studies}, 6(2):327--343, 1993.

\bibitem[HW87]{hull1987}
John Hull and Alan White.
\newblock The pricing of options on assets with stochastic volatilities.
\newblock {\em The journal of finance}, 42(2):281--300, 1987.

\bibitem[Jac75]{Jacod1975}
Jean Jacod.
\newblock {Multivariate point processes: Predictable projection, Radon-Nikodym
  derivatives, representation of martingales}.
\newblock {\em Probability Theory and Related Fields}, 31(3):235--253, 1975.

\bibitem[Jac79]{Jacod1979}
Jean Jacod.
\newblock Existence and uniqueness for stochastic differential equations.
\newblock In M.~Kohlmann and W.~Vogel, editors, {\em Stochastic Control Theory
  and Stochastic Differential Systems}, volume~16 of {\em Lecture Notes in
  Control and Information Sciences}, pages 435--446. Springer Berlin /
  Heidelberg, 1979.

\bibitem[JMSS12]{jeanblanc2011}
Monique Jeanblanc, Michael Mania, Marina Santacroce, and Martin Schweizer.
\newblock Mean-variance hedging via stochastic control and {BSDEs} for general
  semimartingales.
\newblock {\em The Annals of Applied Probability}, 22(6):2388--2428, 2012.

\bibitem[JS03]{Jacod2003}
Jean Jacod and Albert~N. Shiryaev.
\newblock {\em {Limit Theorems for Stochastic Processes}}.
\newblock Springer, 2003.

\bibitem[Kal97]{Kallenberg1997}
Olav Kallenberg.
\newblock {\em Foundations of Modern Probability}.
\newblock Springer, 1997.

\bibitem[Kar14]{Karlsen2014}
Erik~H. Karlsen.
\newblock Optimal portfolio problems under model ambiguity.
\newblock {\em Thesis, University of Oslo}, May 2014.

\bibitem[Lan98]{Lando1998}
David Lando.
\newblock {On Cox processes and credit risky securities}.
\newblock {\em Review of Derivatives Research}, 2(2-3):99--120, 1998.

\bibitem[Lim05]{lim2005}
Andrew Lim.
\newblock Mean-variance hedging when there are jumps.
\newblock {\em Siam J. Control Optim.}, 44(5):1893--1922, 2005.

\bibitem[LM78]{LM78}
Dominique L{\'e}pingle and Jean M{\'e}min.
\newblock Sur l'int\'egrabilit\'e uniforme des martingales exponentielles.
\newblock {\em Z. Wahrsch. Verw. Gebiete}, 42(3):175--203, 1978.

\bibitem[L{\o}k05]{Lokka2005}
Arne L{\o}kka.
\newblock {Martingale Representation of L{\'e}vy Processes}.
\newblock {\em Stochastic Analysis and Applications}, 22(4):867--892, 2005.

\bibitem[{\O}ks05]{Oksendal2005}
Bernt {\O}ksendal.
\newblock {\em {Stochastic Differential Equations}}.
\newblock Springer, 2005.

\bibitem[{\O}S07]{OS07}
Bernt {\O}ksendal and Agn{\`e}s Sulem.
\newblock {\em Applied stochastic control of jump diffusions}.
\newblock Universitext. Springer, Berlin, second edition, 2007.

\bibitem[{\O}S11a]{OS11}
Bernt {\O}ksendal and Agn{\`e}s Sulem.
\newblock Portfolio optimization under model uncertainty and {BSDE} games.
\newblock {\em Quant. Finance}, 11(11):1665--1674, 2011.

\bibitem[{\O}S11b]{OS11-2}
Bernt {\O}ksendal and Agn{\`e}s Sulem.
\newblock Robust stochastic control and equivalent martingale measures.
\newblock In {\em Stochastic analysis with financial applications}, volume~65
  of {\em Progr. Probab.}, pages 179--189. Birkh\"auser/Springer Basel AG,
  Basel, 2011.

\bibitem[Pro05]{Protter2005}
Philip Protter.
\newblock {\em Stochastic Integration and Differential Equations}.
\newblock Springer, 2005.
\newblock Version 2.1.

\bibitem[Ser72]{Serfozo1972}
Richard~F. Serfozo.
\newblock Processes with conditional stationary independent increments.
\newblock {\em Journal of applied probability}, 9(2), 1972.

\bibitem[SS91]{stein1991}
Elias~M Stein and Jeremy~C Stein.
\newblock Stock price distributions with stochastic volatility: an analytic
  approach.
\newblock {\em Review of financial Studies}, 4(4):727--752, 1991.

\bibitem[Win01]{Winkel2001}
Matthias Winkel.
\newblock {The recovery problem for time-changed L{\'evy} processes}.
\newblock Research Report MaPhySto 2001-37, October 2001.

\bibitem[Yab07]{Yablonski2007}
Aleh~L. Yablonski.
\newblock {The Malliavin calculus for processes with conditionally independent
  increments}.
\newblock In F.E. Benth, G.~Nunno, T.~Lindstr{\o}m, B.~{\O}ksendal, and
  T.~Zhang, editors, {\em Stochastic Analysis and Applications}, volume~2 of
  {\em Abel Symposia}, pages 641--678. Springer Berlin Heidelberg, 2007.

\end{thebibliography}

\end{document}